\documentclass[reqno]{amsart}
\usepackage[utf8]{inputenc}
\usepackage{amssymb, amsthm}
\usepackage{mathrsfs}
\usepackage{bm,bbm}
\usepackage{MnSymbol}
\usepackage{microtype} %make spacing better, usually decrease the number of pages
\usepackage{comment} %introduce the comment environment which can hide large part of the text from displaying

\usepackage[left= 3.4 cm, right= 3.4 cm, top= 2.8 cm, bottom=2.7 cm, foot= 1.2 cm, marginparwidth=2.7 cm, marginparsep=0.5 cm]{geometry}

%\parindent0em
%\parskip0.3em 
%\hoffset0em \oddsidemargin10pt \evensidemargin10pt \textwidth42em

%--------------------------FONT
\usepackage{amsfonts}
\DeclareMathAlphabet{\mathpgoth}{OT1}{pgoth}{m}{n}
\DeclareMathAlphabet{\mathesstixfrak}{U}{esstixfrak}{m}{n}
\DeclareMathAlphabet{\mathboondoxfrak}{U}{BOONDOX-frak}{m}{n}
\usepackage{amsmath,amssymb}
\usepackage[bbgreekl]{mathbbol}
\usepackage{yfonts,mathtools}

\usepackage{tikz-cd}
\usepackage[all,cmtip]{xy}

\numberwithin{equation}{section}
\usepackage{soul}

%\linespread{1.2}

%------Labeling-------------
\usepackage{hyperref}
\hypersetup{colorlinks}
\definecolor{darkred}{rgb}{0.5,0,0}
\definecolor{darkgreen}{rgb}{0,0.5,0}
\definecolor{darkblue}{rgb}{0,0,0.5}
\hypersetup{colorlinks, linkcolor=darkblue, filecolor=darkgreen, urlcolor=darkred, citecolor=darkblue}
\makeatletter % `@' now normal "letter"
\@addtoreset{equation}{section}
\makeatother  % `@' is restored as "non-letter"

\numberwithin{equation}{section}

%------Labeling End---------

\newtheorem{thma}{Theorem}

\newtheorem{thm}{Theorem}[section]
\newtheorem{cor}[thm]{Corollary}

\newtheorem{prop}[thm]{Proposition}

\newtheorem{lemma}[thm]{Lemma}
\theoremstyle{definition}
\newtheorem{defn}[thm]{Definition}
\theoremstyle{remark}
\newtheorem{rem}[thm]{Remark}

\usepackage{xcolor}

%\newtheorem*{claim}{Claim}

%--------------Guangbo's newcommands-------------

\newcommand{\beq}{\begin{equation}}
\newcommand{\eeq}{\end{equation}}
\newcommand{\beqn}{\begin{equation*}}
\newcommand{\eeqn}{\end{equation*}}
\newcommand{\ov}{\overline}
\newcommand{\mb}{\mathbb}
\newcommand{\mc}{\mathcal}
\newcommand{\mf}{\mathfrak}

\newcommand{\GW}{{\rm GW}}
\newcommand{\GGW}{\widetilde{\rm GW}{}}
\newcommand{\pt}{{\rm pt}}

\newcommand{\wt}{\widetilde}

\newcommand{\wh}{\widehat}

\newcommand{\uds}[1]{\underline{\smash{#1}}}

\newcommand{\ev}{{\rm ev}}

%--------------Guangbo's newcommands end---------

\title{Cohomological splitting over rationally connected bases}

\author{Shaoyun Bai}
\address{MIT, 77 Massachusetts Avenue Cambridge, MA 02139, USA}
\email{shaoyunb@mit.edu}

\author{Daniel Pomerleano}
\address{University of Massachusetts, Boston, 100 William T, Morrissey Blvd, Boston, MA 02125, USA}
\email{Daniel.Pomerleano@umb.edu}

\author{Guangbo Xu}
\address{Department of Mathematics, Rutgers University, Hill Center--Busch Campus, 110 Frelinghuysen Road, Piscataway, NJ 08854-8019, USA}
\email{guangbo.xu@rutgers.edu}

\thanks{D.P. is supported by NSF DMS-2306204.}

\thanks{G.X. is supported by NSF DMS-2345030.}

%\date{\today}

\begin{document}

\begin{abstract}
We prove a cohomological splitting result for Hamiltonian fibrations over enumeratively rationally connected symplectic manifolds.   As a key application, we prove that the cohomology of a smooth, projective family over a smooth (stably) rational projective variety splits additively over any field. The main ingredients in our arguments include the theory of Fukaya--Ono--Parker (FOP) perturbations developed by the first and third author, which allows one to define integer-valued Gromov--Witten type invariants, and variants of Abouzaid--McLean--Smith's global Kuranishi charts tailored to concrete geometric problems.
\end{abstract}

%The argument, which is based on finding certain non-vanishing Gromov--Witten invariant of the total space, extends to general Hamiltonian fibrations over rationally connected symplectic manifolds provided there is a non-vanishing Gromov-Witten invariant of a certain form.
%We show that the cohomology of any Hamiltonian fibration over a monotone and enumerativelly rationally connected compact symplectic manifold splits additively over any field away from finitely many characteristics. This provides a more refined answer to a twenty-year old query of Lalonde and McDuff. The proof uses the FOP perturbation scheme developed by the first and third authors.

\maketitle

\setcounter{tocdepth}{1}
\tableofcontents

\section{Introduction}

%\gx{Notation: base is $B$, fiber is $M$, total space is $P$.}

This paper is concerned with the classical Leray spectral sequence in the context of symplectic and birational geometry. Recall that for any fibration $\pi_P: P \to B$ of topological spaces with fiber $M$, for any coefficient field ${\mb k}$, one has a Leray spectral sequence 
\beq
E_2^{p,q}:= H^p(B; {\bf R}^q \pi_P({\mb k})) \Longrightarrow H^{p+q}(P; {\mb k}).
\eeq

%\gx{A favorable situation is when this spectral sequence degenerates at the $E_2$-page, as it often implies the cohomology of the total space splits, meaning that as graded ${\mb k}$-vector spaces one has
%\beqn
%H^*(P; {\mb k}) \cong H^*(B; {\mb k}) \otimes_{\mb k} H^*(M; {\mb k}).
%\eeqn
%}
%Let $\pi_P: P \to B$ be a smooth, projective morphism from a smooth complex projective variety $P$ to a smooth complex projective base $B$ with fiber $M$. Fix a coefficient field $\mathbb{k}$ and consider the associated Leray spectral sequence  
%\begin{align} E_2^{p,q}:= H^p(B;\mathbf{R}^q\pi_P(\mathbb{k})) \Longrightarrow H^p(P;\mathbb{k}). 
%\end{align} 

%\textcolor{purple}{DP:The above is only true if the base is simply connected or if there is a splitting by Leray-Hirsch.}

A recurring theme in the literature is that the rigidity of symplectic or complex geometry often forces the spectral sequence to degenerate at the $E_2$-page. This is the case, for example, when $\pi_P$ is a smooth projective morphism between smooth complex projective varieties and when $\mathbb{k}$ is of characteristic zero, according to a celebrated result of Blanchard and Deligne \cite{Blanchard, Deligne}. In symplectic topology, Lalonde, McDuff, Polterovich \cite{LMP, mcduffseidel, Lalonde_McDuff_2003} have shown that degeneration in characteristic zero also holds for certain Hamiltonian fibrations, though it is a more flexible category than projective fibrations. In fact, for the Hamiltonian fibrations they consider, Lalonde, McDuff, Polterovich prove something \emph{a priori} stronger than degeneration; they prove that there is a cohomological splitting: 
\begin{align} H^*(B; \mathbb{Q})\otimes H^*(M; \mathbb{Q}) \cong H^{p+q}(P; \mathbb{Q}). \end{align}

Recently, Abouzaid--McLean--Smith \cite{AMS} proved a cohomological splitting result for Hamiltonian fibrations over $S^2$ in all characteristics as applications of remarkable advances in symplectic geometry.\footnote{See also the work of \cite{Bai_Xu_2022} for an alternate proof of the theorem of Abouzaid--McLean--Smith.} The purpose of this article is to exhibit a broader class of situations in both the symplectic and algebraic categories when this cohomological splitting occurs for coefficient fields with positive characteristic. The main condition we need, roughly speaking, is that the base $B$ is rationally connected, a concept from algebraic geometry. Rationally connected varieties are often considered to be the ``correct higher dimensional  analogs of rational curves"(c.f. \cite[p. 425]{kollar2001}). The main tools that we use are from symplectic enumerative geometry: moduli spaces of stable maps and Gromov--Witten invariants.

\subsection{Statement of results}

We now give a precise definition of the rational connectedness hypothesis we need in our first result. Recall that an algebraic variety is called rationally connected if every two points in this variety are connected by a rational curve. For example, all smooth Fano varieties %over an algebraically closed field of characteristic $0$ 
are rationally connected by a famous theorem of Koll\'ar--Miyaoka--Mori \cite{Kollar_Miyaoka_Mori_1992} and Campana \cite{campana}. A conjecture of Koll\'ar \cite[Conjecture 4.2.7]{kollar-ems} %All of these proofs are based on exhibiting a nonvanishing (possibly descendant) Gromov--Witten invariant of the form \eqref{eqn:GW-ptpt}. 
which says that rational connectedness is symplectic deformation invariant, as well as work on this conjecture by %the work of 
Voisin \cite{voisin-threefold} and Z. Tian \cite{tian-threefold, tian-fourfold}, motivates the following symplectic definition of rational connectedness.

\begin{defn}\label{defn:rational-connected}
Let $(B, \omega_B)$ be a closed symplectic manifold. We say that $B$ is \emph{symplectically or enumeratively rationally connected} if there exists $k \in {\mb Z}_{\geq 0}$ such that it admits a genus $0$ Gromov--Witten invariant
    \begin{equation}\label{eqn:GW-ptpt}
        \mathrm{GW}_{0,k+2}^{B, A}([ \pt], [ \pt], a_1, \cdots, a_k) \neq 0,
    \end{equation}
    where $[\pt] \in H^{\rm top}(B ; {\mb Q})$ is the Poincar\'e dual of the point class, $a_i \in H^*(B; {\mb Q})$, and $A \in H_2(B; {\mb Z})$ is a curve class.
\end{defn}

%This definition is motivated by a conjecture of Koll\'ar \cite[Conjecture 4.2.7]{kollar-ems}, which conjectures that for any pair of symplectically deformation equivalent complex smooth projective varieties, one is rationally connected if and only if the other is so. This conjecture is resolved in full generality for $3$-folds by the work of Voisin and Tian \cite{voisin-threefold,tian-threefold}, with significant progress for $4$-folds \cite{tian-fourfold}. All of these proofs are based on exhibiting a nonvanishing (possibly descendant) Gromov--Witten invariant of the form \eqref{eqn:GW-ptpt}. Accordingly, we use Definition \ref{defn:rational-connected} as our notion for rational connectedness in the symplectic category.

To state our first result, we focus on monotone symplectic manifolds, i.e., those whose first Chern class is a positive multiple of the symplectic class. These manifolds % positively proportional to the cohomology class represented by the symplectic form, known as 
are the symplectic counterparts of Fano varieties. In this case, we can assume in Definition \ref{defn:rational-connected} that the GW invariant is an integer with $a_i \in H^*(B; {\mb Z})$ (cf. \cite{Ruan_Tian, McDuff_Salamon_2004}).
 %When $(B, \omega_B)$ is a monotone symplectic manifold, it is well-known that we can choose such $\mathrm{GW}_{0,k+2}^{B, A}([\pt], [\pt], a_1, \cdots, a_k) $ to be an integer, and $a_i \in H^*(B; {\mb Z})$ (cf. \cite{Ruan_Tian, McDuff_Salamon_2004}).

\begin{thma}\label{thm:split}
Let $(B, \omega_B)$ be a compact, monotone, and enumeratively rationally connected symplectic manifold with a nonzero Gromov--Witten invariant $\mathrm{GW}_{0,k+2}^{B, A}([\pt], [\pt], a_1, \cdots, a_k)$. If $(M, \omega_M)$ is a compact symplectic manifold and 
\beqn
\xymatrix{ M \ar[r] & P \ar[d]^{\pi_P} \\
            &         B }
\eeqn
is a Hamiltonian fibration, then for any coefficient field $\mathbb{k}$ whose characteristic %with positive characteristics $p$ which 
does not divide $\mathrm{GW}_{0,k+2}^{B, A}([\pt], [\pt], a_1, \cdots, a_k) \in {\mb Z}$, there is an isomorphism of graded ${\mb k}$-vector spaces:   \begin{equation}\label{eqn:coh-splitting}
        H^*(P; \mathbb{k}) \cong H^*(B; \mathbb{k}) \otimes_{\mathbb{k}} H^*(M; \mathbb{k}).
    \end{equation}
    %as $\mathbb{k}$-vector spaces. If $\mathbb{k} = {\mb Q}$, the additive splitting \eqref{eqn:coh-splitting} always holds under the assumption $\mathrm{GW}_{0,k+2}^{B, A}([\pt], [\pt], a_1, \cdots, a_k) \neq 0$.
\end{thma}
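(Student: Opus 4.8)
The strategy follows the Lalonde--McDuff--Polterovich philosophy: to establish the cohomological splitting \eqref{eqn:coh-splitting}, it suffices by a Leray--Hirsch type argument to show that the restriction map $H^*(P;{\mb k}) \to H^*(M;{\mb k})$ is surjective; equivalently, that the fiber inclusion $\iota\colon M \hookrightarrow P$ is injective on homology with ${\mb k}$-coefficients. Indeed, once $\iota_*$ is injective (equivalently $\iota^*$ is surjective), choosing ${\mb k}$-classes in $H^*(P)$ restricting to a basis of $H^*(M)$ and multiplying by $\pi_P^* H^*(B)$ produces, via the Leray--Hirsch theorem, the desired isomorphism of graded ${\mb k}$-vector spaces. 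So the whole problem is reduced to the injectivity of $\iota_*$ on ${\mb k}$-homology, and this is where the enumerative hypothesis enters.

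To prove injectivity of $\iota_*$, I would build a ``section-type'' Gromov--Witten count in the total space $P$ that detects the fiber class. The Hamiltonian fibration $P \to B$ carries a coupling form making the vertical tangent bundle and the fibers symplectic, and the curve class $A \in H_2(B;{\mb Z})$ lifts (non-canonically, up to fiber classes) to a section class $\sigma_A \in H_2(P;{\mb Z})$. The plan is to consider the moduli space of genus $0$ stable maps to $P$ in class $\sigma_A$ with $k+2$ marked points, with constraints: at the first two marked points impose the fiber $M$ over two generic points $b_0, b_\infty \in B$ together with a point class in $M$ pushed off from a cycle $Z \subset M$, and at the remaining $k$ marked points impose $\pi_P^{-1}$ of cycles Poincar\'e dual to the $a_i$. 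Projecting such configurations to $B$ recovers (a neighborhood of) the curves counted by $\mathrm{GW}_{0,k+2}^{B,A}([\pt],[\pt],a_1,\dots,a_k)$, so this number is nonzero; on the other hand, evaluating at the first marked point lands in the fiber $M$ and, by a dimension/degeneration argument, computes the pairing of $\iota_*[Z]$ against a corresponding class. The upshot is an identity, valid with ${\mb Z}$-coefficients, of the shape
\begin{equation*}
\bigl\langle \iota_*[Z], \text{(something)} \bigr\rangle_{P} \;=\; \pm\, \mathrm{GW}_{0,k+2}^{B,A}([\pt],[\pt],a_1,\dots,a_k)\cdot \langle [Z], \text{(dual class)}\rangle_M + (\text{lower order}),
\end{equation*}
which forces $\iota_*[Z] \neq 0$ in $H_*(P;{\mb k})$ whenever $[Z]\neq 0$ in $H_*(M;{\mb k})$, as long as $\mathrm{char}\,{\mb k}$ does not divide the Gromov--Witten number. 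The monotonicity of $B$, together with the standard fact that a Hamiltonian fibration over a monotone base with monotone fiber can be arranged to be monotone (or at least to have the relevant moduli spaces of expected dimension), guarantees that the section-class moduli space has the right virtual dimension and that no bubbling into multiply-covered vertical spheres contributes.

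The main obstacle, and the reason the FOP perturbation technology and Abouzaid--McLean--Smith global Kuranishi charts are invoked, is that all of this must be carried out with \emph{integer} coefficients: the usual virtual techniques over ${\mb Q}$ are insufficient since we need to control divisibility by $\mathrm{char}\,{\mb k}$. Concretely I would (i) construct a global Kuranishi chart for the moduli space of genus $0$ stable maps to $P$ in the section class $\sigma_A$ with the given marked points, adapting the AMS construction to the fibration setting so that the projection $P \to B$ and the evaluation maps are compatible with the chart structure; (ii) equip the chart with an FOP-regular perturbation so that the resulting virtual fundamental cycle is ${\mb Z}$-valued and the relevant counts are honest integers; and (iii) prove the ${\mb Z}$-coefficient identity above by a cobordism/degeneration argument relating the section-class count in $P$ to the product of the base Gromov--Witten invariant and a fiberwise count, keeping careful track of signs and orientations (here the normal-complex or relatively-oriented structure on the obstruction bundles is what makes the FOP framework applicable). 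The bookkeeping of which auxiliary cycles $Z$, which degree shifts, and which parts of the Leray--Hirsch argument require the full strength of the integral count versus a rational statement is the delicate part; everything else is a matter of assembling known machinery.
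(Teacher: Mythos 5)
Your overall architecture agrees with the paper's: reduce via Leray--Hirsch to surjectivity of $H^*(P;\mathbb{k})\to H^*(M;\mathbb{k})$, i.e.\ injectivity of $\iota_*$, and detect this by a nonvanishing $\mathbb{k}$-valued Gromov--Witten count in the total space in classes lifting $A$, localized over the finitely many transverse base curves and made integral by AMS-style global Kuranishi charts plus FOP perturbations. Up to that point you are reconstructing the paper's Theorem \ref{thm_nonvanishing} and Proposition \ref{prop44}.

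The genuine gap is in your displayed identity and the sentence that follows it. When the count is localized to a base curve $u_l$, what appears is not ``$\pm\,\mathrm{GW}^{B,A}_{0,k+2}\cdot\langle[Z],\text{dual}\rangle_M$ plus lower order'': it is the base invariant times the \emph{graph (section) Gromov--Witten pairing} of the Hamiltonian fibration $u_l^*P\to S^2$, whose clutching loop $\phi$ is in general nontrivial. For such a fibration there is no canonical ``zero'' section class, and there is no filtration in which the leading contribution is the classical Poincar\'e pairing on $M$ with only higher-energy corrections; that structure is available only for the trivial loop. So your argument that nonzero $[Z]$ forces a nonzero count has no justification as written. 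What is actually needed is the nondegeneracy of the pairing $(\beta_0,\beta_\infty)\mapsto\sum_{\tilde A'}q^{\tilde\Omega_\phi(\tilde A')}\widetilde{\mathrm{GW}}{}_{0,2,\mathbf{z}}^{\tilde M_\phi,\tilde A'}(\beta_0,\beta_\infty)$, i.e.\ the invertibility of a Seidel-type map over a field of characteristic $p$. This is the heart of the paper (Theorem \ref{thm_invertible} and Corollary \ref{cor_nondegenerate}) and it is not a formal consequence of the composition trick alone: in finite characteristic it is proved via Morse/Floer flow categories, PSS and SSP bimodules, and the equality of ranks of Hamiltonian Floer homology and ordinary homology over a field (Theorem \ref{thm_equal_rank}), none of which appears in your proposal. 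A secondary error: you invoke monotonicity of the fiber, or of the total space, to rule out vertical bubbling and multiple covers, but the theorem places no positivity assumption on $(M,\omega_M)$; this is precisely why the total-space moduli spaces must be treated virtually (summing over all lifts $\tilde A$ of $A$, weighted by the Novikov variable) rather than by classical transversality, as in the paper's vertical/horizontal global Kuranishi chart construction (Proposition \ref{prop_chart_detail}).
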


Returning to our original algebro-geometric situation, Theorem \ref{thm:split} implies the following result. 

\begin{cor}\label{cor12}
Let $\pi_P: P \to B$ be a smooth, projective morphism from a smooth complex projective variety $P$ to a smooth complex projective base $B$ which is Fano and enumeratively rationally connected. Then for any coefficient field ${\mb k}$, $H^*(P; {\mb k}) \cong H^*(B; {\mb k}) \otimes_{\mb k} H^*(M; {\mb k})$ provided that there is a genus zero primary Gromov--Witten invariant of $B$ with at least two point insertions which is not divisible by ${\rm char}({\mb k})$. 
\end{cor}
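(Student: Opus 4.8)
The plan is to deduce Corollary \ref{cor12} from Theorem \ref{thm:split} by reconciling the algebro-geometric hypotheses with the symplectic ones. First I would observe that a smooth complex projective variety $B$ that is Fano carries a natural symplectic structure $\omega_B$ coming from a K\"ahler form in the anticanonical class (or any ample class, rescaled); since $-K_B$ is ample, $c_1(TB)$ is a positive multiple of $[\omega_B]$, so $(B,\omega_B)$ is monotone in the sense required by Theorem \ref{thm:split}. Next, one must match the notions of Gromov--Witten invariant: the genus zero primary GW invariants defined algebraically (via virtual fundamental classes on Kontsevich moduli spaces of stable maps to the projective variety) agree with the symplectic genus zero GW invariants of $(B,\omega_B)$ --- this is a standard comparison result (e.g.\ via the algebraic/symplectic virtual class comparison of Li--Tian, Siebert, and others). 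Hence the hypothesis that $B$ admits a genus zero primary GW invariant with at least two point insertions not divisible by $\mathrm{char}({\mb k})$ translates precisely into the enumerative rational connectedness hypothesis of Theorem \ref{thm:split}, with the nonvanishing integer being the invariant in question.

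Second, I would address the fibration side. A smooth projective morphism $\pi_P : P \to B$ between smooth projective varieties has fibers that are all diffeomorphic to a fixed smooth projective variety $M$, and by Ehresmann's theorem it is a smooth fiber bundle. To apply Theorem \ref{thm:split} one needs this to be a Hamiltonian fibration, i.e.\ the structure group reduces to $\mathrm{Ham}(M,\omega_M)$ and there is a closed $2$-form on $P$ restricting to the fiberwise symplectic form. This is where I would invoke the standard fact (going back to the arguments behind the Blanchard--Deligne theorem, and used in the same way by Lalonde--McDuff--Polterovich) that a smooth projective morphism is a Hamiltonian fibration: one picks a relatively ample line bundle $\mathcal{L}$ on $P$, equips it with a Hermitian metric, and the curvature of the induced connection gives a closed $2$-form $\Omega$ on $P$ whose restriction to each fiber is a K\"ahler (hence symplectic) form $\omega_M$; the monodromy then lies in the Hamiltonian group because the flux vanishes (the parallel transport maps preserve $[\omega_M]$ and, $B$ being simply connected as a Fano variety, there is no obstruction, or more directly the coupling form $\Omega$ exhibits the Hamiltonian structure). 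With these identifications in place, the hypotheses of Theorem \ref{thm:split} are satisfied, and \eqref{eqn:coh-splitting} gives exactly the claimed splitting $H^*(P;{\mb k}) \cong H^*(B;{\mb k}) \otimes_{\mb k} H^*(M;{\mb k})$.

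Finally I would note the numerical point: Theorem \ref{thm:split} requires $\mathrm{char}({\mb k})$ not to divide the chosen integer GW invariant, which is exactly the stated hypothesis; and for the cases $\mathrm{char}({\mb k}) = 0$ the conclusion also follows (either directly from Theorem \ref{thm:split}, or classically from Blanchard--Deligne), so the corollary holds for \emph{any} coefficient field ${\mb k}$ subject to the divisibility condition, as stated.

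The main obstacle I anticipate is not any single deep step but rather the careful verification that a smooth projective morphism is genuinely a Hamiltonian fibration in the precise technical sense used in the body of the paper --- in particular pinning down the coupling/closed $2$-form and checking that the monodromy is Hamiltonian rather than merely symplectic. The GW comparison is standard but must be cited correctly so that the \emph{integral} invariant matching up with the symplectic FOP-perturbation-defined invariant of the paper is unambiguous; any normalization discrepancy (signs, or a multiplicative constant) would need to be absorbed into the divisibility statement. Everything else is a matter of quoting Theorem \ref{thm:split} with the translated hypotheses.
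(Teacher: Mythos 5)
Your proposal is correct and follows essentially the same route as the paper: deduce the corollary from Theorem \ref{thm:split} by noting that Fano implies monotone and simply connected (Koll\'ar--Miyaoka--Mori, Campana), that the smooth projective morphism carries a closed extension of the fiberwise K\"ahler form and hence, over a simply connected base, is a Hamiltonian fibration (the paper cites McDuff--Salamon and Lalonde--McDuff for exactly the two steps you argue by hand with a relatively ample line bundle), and that the nondivisibility hypothesis matches the one in Theorem \ref{thm:split}. The only cosmetic difference is that you spell out the algebraic-versus-symplectic GW comparison, which the paper leaves implicit.
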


\begin{proof}
By the theorem of Koll\'ar--Miyaoka--Mori \cite{Kollar_Miyaoka_Mori_1992} and Campana \cite{campana}, $\pi_1(B) = 0$. On the other hand, as $P$ is projective, there is a K\"ahler form $\omega_P \in \Omega^2(P)$; as $\pi_P$ is a holomorphic submersion, each fiber is a complex, hence symplectic submanifold. Then by \cite[Lemma 6.2]{McDuff_Salamon_1998} (or see \cite[Theorem 1.7]{BGS2} for K\"ahler fibrations), the map $\pi_P: P \to B$ admits the structure of symplectic fibration. Then by \cite[Theorem 1.1]{Lalonde_McDuff_2003}, such a symplectic fibration reduces to a Hamiltonian fibration. Therefore, Theorem \ref{thm:split} implies the splitting result. 
\end{proof}

Note that we do not impose any positivity condition on the fiber $(M, \omega_M)$. At this level of generality, Theorem \ref{thm:split} was only known for $B = \mathbb{CP}^1$ by Abouzaid--McLean--Smith \cite[Theorem 1.1]{AMS} (see the work of the first and third author \cite{Bai_Xu_2022} for an alternative proof), and $B$ being a product of Grassmannians by a result of the first and second author \cite[Lemma 2.8]{Bai_Pomerleano_2024}. To compare these previous results with Theorem \ref{thm:split}, note that for $\mathbb{CP}^1$, the Gromov--Witten invariant $\mathrm{GW}_{0,2}^{\mathbb{CP}^1, [\mathbb{CP}^1]}([\pt], [\pt]) = 1$ because itself is the unique line connecting two points; for the Grassmannians, as discussed in \cite[Example 3.10]{hu-rational-connected}, there is a nonvanishing Gromov--Witten invariant $\mathrm{GW}_{0,3}^{B, A}([\pt], [\pt], a) = 1$, so $\eqref{eqn:coh-splitting}$ holds for any field $\mathbb{k}$ when $B$ is either $\mathbb{CP}^1$ or a Grassmannian directly from Theorem \ref{thm:split}. 

\begin{rem} In Section \ref{section6} below, we provide an example of a projective fibration over a smooth Fano variety whose cohomology does not split in characteristic $2$. We expect, but do not prove, that this variety admits a genus zero primary Gromov--Witten invariant with two point insertions which is equal to $2$.\end{rem}

In proving Theorem \ref{thm:split}, the monotone condition of the base can be relaxed as long as the nonzero GW invariant is realized by a transversely cut out moduli space (see Theorem \ref{thm41}). This seemingly technical extension has a noteworthy consequence. Recall that a complex projective variety $B$ of dimension $m$ is {\it stably rational} if $B \times \mb{CP}^r$ is birational to $\mb{CP}^{m+r}$. It is not difficult to see that stably rational varieties are rationally connected \cite{Harris2001}. By combining the above methods with certain topological arguments from \cite{Lalonde_McDuff_2003,Bai_Pomerleano_2024}, we are able to show:  
%The monotone condition on $B$ can be relaxed as long as the nonzero GW invariant is realized by a transverse moduli space (see Theorem \ref{thm41}). This seemingly technical extension has a noteworthy consequence. To state it, we recall the following definition: 
%
%\begin{defn} A complex projective variety $B$ of dimension $m$ is stably rational if $B \times \mb{CP}^r$ is birational to $\mb{CP}^{m+r}$. \end{defn}

%It is not difficult to see that stably rational varieties are rationally connected \cite{Harris2001}.  By combining the above methods with certain topological arguments from \cite{Lalonde_McDuff_2003,Bai_Pomerleano_2024}, we are able to show:  

\begin{thma}\label{thm:B}
Suppose $B$ is a smooth stably rational projective variety. Then for any Hamiltonian fibration $P \to B$ (in particular, any smooth algebraic family $P \to B$ with $P$ projective) and any coefficient field ${\mb k}$, there is an isomorphism of graded ${\mb k}$-vector spaces 
\beqn
H^*(P; {\mb k}) \cong H^*( B; {\mb k}) \otimes_{\mb k} H^*(M; {\mb k}).
\eeqn
\end{thma}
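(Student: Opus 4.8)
The plan is to reduce Theorem \ref{thm:B} to Theorem \ref{thm:split} (or rather its transversality-enhanced version, Theorem \ref{thm41}) by exploiting stable rationality in two ways: to produce the needed enumerative input, and to handle the passage from $B \times \mathbb{CP}^r$ back to $B$. First I would recall that a stably rational $B$ of dimension $m$ satisfies $B \times \mathbb{CP}^r \sim_{\mathrm{bir}} \mathbb{CP}^{m+r}$ for some $r$, hence $B \times \mathbb{CP}^r$ is rational; in particular it is rationally connected, and one should be able to extract a genus zero Gromov--Witten invariant of $B \times \mathbb{CP}^r$ with two point insertions that equals $\pm 1$ (over $\mathbb{Q}$, hence is a unit in every field). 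The cleanest route is to use a birational model and the fact that for a \emph{rational} smooth projective variety $X$ the relevant ``point-to-point'' genus zero invariant can be taken to be $1$: one can either invoke that $\mathbb{CP}^{m+r}$ has $\mathrm{GW}_{0,2}^{\mathbb{CP}^{m+r},\mathrm{line}}([\pt],[\pt]) = 1$ and transport nonvanishing of such an invariant across a birational map between smooth projective varieties (using that the difference of the two varieties lives in codimension $\geq 2$ and degree-one enumerative invariants are birationally robust), or directly cite \cite{hu-rational-connected} for rational varieties. Either way the upshot is: $X := B \times \mathbb{CP}^r$ admits a nonzero genus zero primary GW invariant with two point insertions, realized (after a generic choice of almost complex structure, since the curve class is minimal and the moduli space of such low-degree curves through two generic points in a rational variety is transversely cut out) by a transversely cut out moduli space, so Theorem \ref{thm41} applies to $X$.

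Next I would apply Theorem \ref{thm41} to the fibration obtained by pulling back $P \to B$ along the projection $\mathrm{pr}: B \times \mathbb{CP}^r \to B$. Explicitly, set $P' := P \times \mathbb{CP}^r \to B \times \mathbb{CP}^r = X$; this is again a Hamiltonian fibration with the same fiber $M$ (a product of a Hamiltonian fibration with a trivial one is Hamiltonian). Theorem \ref{thm41} then gives
\begin{equation*}
H^*(P \times \mathbb{CP}^r; {\mb k}) \cong H^*(B \times \mathbb{CP}^r; {\mb k}) \otimes_{\mb k} H^*(M; {\mb k})
\end{equation*}
for every field ${\mb k}$. By the Künneth theorem the left side is $H^*(P; {\mb k}) \otimes_{\mb k} H^*(\mathbb{CP}^r; {\mb k})$ and the right side is $H^*(B; {\mb k}) \otimes_{\mb k} H^*(\mathbb{CP}^r; {\mb k}) \otimes_{\mb k} H^*(M; {\mb k})$. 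So one obtains an isomorphism of graded ${\mb k}$-vector spaces after tensoring both $H^*(P;{\mb k})$ and $H^*(B;{\mb k})\otimes H^*(M;{\mb k})$ with the fixed finite-dimensional graded space $H^*(\mathbb{CP}^r;{\mb k})$.

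The final step is the cancellation: from a graded isomorphism $V \otimes H^*(\mathbb{CP}^r;{\mb k}) \cong W \otimes H^*(\mathbb{CP}^r;{\mb k})$ one must deduce $V \cong W$ as graded vector spaces, where $V = H^*(P;{\mb k})$, $W = H^*(B;{\mb k}) \otimes H^*(M;{\mb k})$. Since $H^*(\mathbb{CP}^r;{\mb k})$ has Poincaré polynomial $1 + t^2 + \cdots + t^{2r}$, this amounts to cancelling a polynomial factor from an identity of formal power series (Poincaré series) with nonnegative integer coefficients; as $\mathbb{Z}[[t]]$ is an integral domain and $1+t^2+\cdots+t^{2r}$ is a nonzero non-zero-divisor, the Poincaré series of $V$ and $W$ agree degree by degree, which is exactly $V \cong W$ as graded ${\mb k}$-vector spaces. (This cancellation trick is precisely the kind of topological argument from \cite{Lalonde_McDuff_2003,Bai_Pomerleano_2024} alluded to in the excerpt, and it is the reason the theorem is stated only as an additive splitting.) The main obstacle I anticipate is not this formal cancellation but the first step: ensuring that stable rationality really delivers a genus zero GW invariant of $B \times \mathbb{CP}^r$ with two point insertions that is (a) nonzero and (b) realized by a transversely cut out moduli space, so that the monotonicity hypothesis of Theorem \ref{thm:split} can be dropped via Theorem \ref{thm41}. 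This requires care because $B \times \mathbb{CP}^r$ need not be Fano or monotone, so one cannot simply quote the monotone integrality statement; instead one must argue that for the minimal curve class contributing to the point-to-point invariant of a rational variety, a generic almost complex structure makes the moduli space of stable maps a smooth compact orbifold of the expected dimension with no boundary/multiple-cover degenerations affecting the count — which is where a birational comparison with $\mathbb{CP}^{m+r}$, together with the standard fact that lines through two points in projective space are unobstructed, does the real work.
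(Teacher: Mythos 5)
Your reduction of stable rationality to the product case is workable: replacing $B$ by $X=B\times\mathbb{CP}^r$, pulling back the fibration, and cancelling the factor $H^*(\mathbb{CP}^r;{\mb k})$ via Poincar\'e series is a legitimate substitute for the paper's appeal to the Lalonde--McDuff surjection lemma \cite[Lemma 4.1]{Lalonde_McDuff_2003}, and the cancellation itself is sound since everything is finite dimensional over a field. The problem is the step you yourself flag as the ``real work,'' and it is a genuine gap, not just a technical loose end. You need, for $X=B\times\mathbb{CP}^r$ itself, a genus zero two-point invariant that is nonzero, not divisible by ${\rm char}({\mb k})$ for \emph{every} field, and realized by a moduli space satisfying the very specific transversality and no-other-contributions hypothesis \eqref{eqn40} of Theorem \ref{thm41}. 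Neither of your two proposed sources delivers this. Gromov--Witten invariants are not birational invariants, and there is no general principle that ``degree-one'' or point-to-point invariants transport across an abstract birational equivalence $X\sim_{\rm bir}\mathbb{CP}^{m+r}$: the birational map is not a morphism in either direction, the class corresponding to a line need not be extremal in $X$, and stable maps with components meeting the indeterminacy or exceptional loci can contribute to the same class through the two point constraints. Establishing that they do not is exactly the content that must be proved, and it is also why the citation of \cite{hu-rational-connected} does not suffice: that reference is used in the paper only for Grassmannians, and it is not known (indeed it is essentially the Lalonde--McDuff/Koll\'ar-type open problem) that every rational variety is enumeratively rationally connected in the strong, transversely-realized sense required by Theorem \ref{thm41}.

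The paper closes this gap differently, and the difference matters. It first reduces to $B$ rational, then resolves the indeterminacy of $\mathbb{CP}^n\dashrightarrow B$ by Hironaka to get a smooth $\tilde B$ with an honest morphism $\pi_1:\tilde B\to\mathbb{CP}^n$ that is a composition of blowups with smooth centers, so that $\tilde B$ contains a Zariski open set identified with $\mathbb{CP}^n\setminus Z$, $\operatorname{codim}Z\ge 2$. On $\tilde B$ one can argue geometrically (Lemma \ref{lemma51}) that the unique line through two general points avoids $Z$, lifts to a regular curve in the lifted line class, and is the \emph{only} stable map in that class through the constraints; this verifies the hypotheses of Theorem \ref{thm41} with invariant equal to $1$, giving splitting for every Hamiltonian fibration over $\tilde B$ in all characteristics. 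Crucially, $\tilde B$ is neither $B$ nor $B\times\mathbb{CP}^r$, so a second transfer step is needed: the birational morphism $\pi_2:\tilde B\to B$ together with \cite[Lemma 2.6]{Bai_Pomerleano_2024} pushes the splitting property down from $\tilde B$ to $B$. If you repair your argument by passing to a common resolution of $X$ and $\mathbb{CP}^{m+r}$, you will find you are proving the invariant statement on the resolution rather than on $X$, and you will then need exactly this kind of descent lemma along the birational morphism back to $X$ (or to $B$) --- at which point your proof becomes the paper's. So the missing ingredients are: (i) a correct mechanism producing the transverse count of $1$ (work on a blowup of $\mathbb{CP}^n$ with an actual morphism, not an abstract birational equivalence), and (ii) the descent step for cohomological splitting along a birational morphism, which your proposal omits entirely.
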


%In view of the expected symplectic deformation invariance of rationally connectedness, we expect that Theorem \ref{thm:split} would imply \eqref{eqn:coh-splitting} holds for $B$ being any smooth Fano variety, possibly away from some bad characteristics.

\begin{rem} 
Historically, the study of cohomological splitting of Hamiltonian fibrations over $\mathbb{CP}^1$ was initiated by Lalonde--McDuff--Polterovich \cite{LMP} and McDuff \cite{mcduffseidel} using holomorphic curve methods, building on Seidel's work on Seidel representations \cite{seidel-rep}. For general symplectic fibers, Lalonde--McDuff \cite{Lalonde_McDuff_2003} used various ``soft" topological arguments to establish the cohomological splitting over ${\mb Q}$ for Hamiltonian fibrations over a large class of manifolds, and proposed the relation between cohomological splitting and enumeratively rational connectedness \cite[Section 4.2]{Lalonde_McDuff_2003}. They established cohomological splitting over ${\mb Q}$ for any Hamiltonian fibration over blowups of $\mathbb{CP}^n$. The Lalonde--McDuff proposal was partially realized by Hyvrier \cite{hyvrier}, who proved the cohomological splitting with ${\mb Q}$ coefficients for Hamiltonian fibrations over symplectically rationally connected bases under certain restrictive assumptions. Theorem \ref{thm:split} and Theorem \ref{thm:B} are the first general results along this line of inquiry and also incorporate torsion classes in cohomology.
\end{rem}

\subsection{Discussion of the proof}

Because we work over a field, by the Leray--Hirsch theorem, we know that our result about the splitting of cohomology follows from %the degeneration of the Leray--Serre spectral sequence of the fibration, which, in turn, is implied by 
the surjectivity of the restriction map
\beqn
H^*(P; \mathbb{k}) \to H^* (M; \mathbb{k}),
\eeqn
or equivalently the injectivity of the inclusion of homology. We prove this injectivity via a certain non-vanishing result of ${\mb k}$-valued Gromov--Witten invariants of the total space. The Hamiltonian fibration structure induces a closed extension of the fiberwise symplectic form to the total space $P$, which becomes a symplectic form after adding a multiple of the pullback of the symplectic form on the base $B$. %Then the symplectic deformation equivalence class of $P$ is well-defined. 
When ${\rm char}({\mb k}) > 0$, the ${\mb k}$-valued Gromov--Witten invariants can be defined via the integral Gromov--Witten pseudocycle defined in \cite{Bai_Xu_2022} following the idea of Fukaya--Ono \cite{Fukaya_Ono_integer}. Based on this, Theorem \ref{thm:split} follows from the following nonvanishing result. To illustrate the idea of the proof without getting into the details we assume that $k = 0$ so the base $B$ admits a nonvanishing integral Gromov--Witten invariant $\GW_{0,2}^{B, A}({\rm pt}, {\rm pt}) \neq 0$.

\begin{thm}\label{thm14} (See a more general version as Theorem \ref{thm_nonvanishing})     Under the same assumptions as in Theorem \ref{thm:split} while assuming $k = 0$, for any nonzero homology class $\beta_0 \in H_{*}(M; {\mb k})$, there exists a homology class $\beta_\infty \in H_{*}(M; {\mb k})$ and a curve class $\tilde A \in H_2(P; {\mb Z})$ such that
    \begin{equation}
        \mathrm{GW}^{P, {\mb k}; \tilde A}_{0,2} \Big( {\rm PD}(\iota_*(\beta_0)), {\rm PD}(\iota_*(\beta_\infty)) \Big) \neq 0,
    \end{equation}
    where the above Gromov--Witten invariant is the ${\mb k}$-valued version induced from the integral Gromov--Witten invariants.
\end{thm}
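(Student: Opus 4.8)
The plan is to construct, for any nonzero $\beta_0 \in H_*(M;\mathbb{k})$, an explicit homology class $\beta_\infty$ and a curve class $\tilde A \in H_2(P;\mathbb{Z})$ such that the $\mathbb{k}$-valued Gromov--Witten invariant of $P$ with two point insertions dual to $\iota_*(\beta_0)$ and $\iota_*(\beta_\infty)$ is nonzero. The starting observation is that the Hamiltonian fibration structure gives a closed $2$-form $\Omega$ on $P$ restricting to $\omega_M$ on each fiber, and $\Omega_\lambda := \Omega + \lambda \pi_P^*\omega_B$ is symplectic for $\lambda \gg 0$; the monotonicity of $B$ plus the Hamiltonian condition makes $(P,\Omega_\lambda)$ behave well enough (at least along the relevant curve classes) that integral GW invariants of $P$ are defined via the FOP-perturbation framework of \cite{Bai_Xu_2022}. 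The curve class $\tilde A$ I would take to be a lift of $A \in H_2(B;\mathbb{Z})$ to $H_2(P;\mathbb{Z})$ --- this exists after possibly modifying by fiber classes, and the key point is that sections over the curves counted by $\mathrm{GW}^{B,A}_{0,2}(\pt,\pt)$ contribute to curves in $P$ in class $\tilde A$.

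The core of the argument is a fiber-product / splitting-axiom computation. I would set up a moduli space of genus $0$ stable maps to $P$ in class $\tilde A$ with two marked points, and use the projection $\pi_P$ to map it to the moduli space of stable maps to $B$ in class $A$ with two marked points, the latter carrying the nonzero invariant $\mathrm{GW}^{B,A}_{0,2}(\pt,\pt)$. Imposing the two point constraints on $B$ (via the insertions $[\pt],[\pt]$) cuts the base moduli space down to a finite set of points with total (signed/FOP-weighted) count equal to $\mathrm{GW}^{B,A}_{0,2}(\pt,\pt)\neq 0$ in $\mathbb{k}$ (here we use that $\mathrm{char}(\mathbb{k})$ does not divide this integer). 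Over each such point, the fiber of the map of moduli spaces is (a compactification of) the space of sections of the pulled-back Hamiltonian fibration over that base curve. The evaluation maps at the two marked points now land in the two fibers $M_0, M_\infty \cong M$ over the two constraint points on $B$. What remains is to show that, after pairing with $\mathrm{PD}(\iota_*\beta_0)$ at the first marked point, the pushforward of the resulting section-space class to $M_\infty$ is a \emph{nonzero} multiple of some $\iota_*\beta_\infty$ --- and one is free to \emph{choose} $\beta_\infty$ to be exactly (the Poincar\'e dual of) this pushforward class, so nonvanishing reduces to the statement that the whole fiberwise count does not vanish.

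To see the fiberwise count is nonzero, I would use a degeneration/neck-stretching or a direct deformation argument to identify the section-space contribution with something computable: stretching the almost complex structure so that holomorphic curves in $P$ in class $\tilde A$ break into a multisection piece covering the base curve and a fiber-bubble piece in class $\iota_*\beta_0$-compatible homology, in the spirit of the Seidel-representation / PSS arguments of \cite{AMS, Bai_Xu_2022}. The upshot is that the count localizes to: (count of base curves through two points) $\times$ (a fiberwise operation sending $\beta_0$ to a well-defined class $\beta_\infty$). Since the FOP-perturbation theory guarantees the $B$-count is $\mathrm{GW}^{B,A}_{0,2}(\pt,\pt)$ mod $\mathrm{char}(\mathbb{k})$, which is nonzero, and since $\beta_\infty$ is \emph{defined} as the output of the fiberwise operation (so that the pairing is tautologically the product of these two quantities), the invariant is nonzero. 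This is the strategy that in the case $B = \mathbb{CP}^1$ recovers the argument of \cite{Bai_Xu_2022}, where the section space over the unique line is a single copy of $M$ and the fiberwise operation is essentially the identity; the general case replaces ``the unique line'' by ``the FOP-weighted count of curves through two points''.

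The main obstacle, I expect, is twofold. First, technically: setting up compatible global Kuranishi charts with FOP-perturbations simultaneously on the $P$-moduli space, the $B$-moduli space, and the section spaces, compatibly with the projection $\pi_P$ and the evaluation maps, so that the splitting computation is valid at the chain/virtual-cycle level over $\mathbb{Z}$ (not just $\mathbb{Q}$). This requires the relative/fibered global Kuranishi chart technology hinted at in the abstract (``variants of Abouzaid--McLean--Smith's global Kuranishi charts tailored to concrete geometric problems''), and care that the normalization/orientation conventions make the integral counts match. Second, conceptually: ensuring the section space over a base curve --- which may be nodal, so one must work with stable sections / a ``pearl''-type compactification --- contributes a \emph{well-defined} class $\beta_\infty$ independent of auxiliary choices, and that the monotonicity (or transversality) hypotheses rule out negative-multiple-cover or sphere-bubble contributions in the fiber direction that could make the fiberwise operation vanish or become ill-defined. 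Controlling these fiber bubbles without any positivity assumption on $M$ is exactly where the Hamiltonian (as opposed to merely symplectic) hypothesis and the FOP machinery do the essential work.
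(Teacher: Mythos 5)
Your overall skeleton does match the paper's: the $\mathbb{k}$-valued invariants of $P$ in classes $\tilde A$ lying over $A$ are computed by a localization/product formula, equal to the transverse count of base curves through the two points times graph (``section'') GW invariants of the pulled-back Hamiltonian fibrations over those curves (this is Proposition \ref{prop44}, proved via the specially engineered chart of Proposition \ref{prop_chart_detail}, where the FOP perturbation near the fibers over the finitely many base solutions is a stabilization of a vertical chart --- not via a degeneration or splitting-axiom argument). One small correction: in the paper the base count is an honest transverse pseudocycle count, since the hypotheses of Theorem \ref{thm41} require the relevant base moduli space to be regular; the FOP machinery enters only in the fiber (vertical) direction.

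The genuine gap is the nonvanishing of the fiberwise contribution. Declaring $\beta_\infty$ to be ``the pushforward of the section-space class'' does not help: that pushforward could a priori be zero, i.e.\ for the given $\beta_0$ the section counts $\GGW_{0,2,\mathbf{z}}^{\tilde M_\phi,\tilde A'}(\beta_0,\beta_\infty)$ could vanish for every $\beta_\infty$ and every $\tilde A'$, in which case your ``tautological product'' is $m\cdot 0=0$. What is actually needed is that the pairing $(\beta_0,\beta_\infty)\mapsto \sum_{\tilde A'} q^{\tilde\Omega_\phi(\tilde A')}\GGW_{0,2,\mathbf{z}}^{\tilde M_\phi,\tilde A'}(\beta_0,\beta_\infty)$ is nondegenerate over the Novikov field with ground field ${\mb k}$ of characteristic $p$ (Corollary \ref{cor_nondegenerate}), equivalently that the Seidel-type map is invertible in finite characteristic. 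Your appeal to ``degeneration/neck-stretching in the spirit of Seidel/PSS'' is exactly what is not available off the shelf here: the FOP perturbation scheme does not directly produce splitting/gluing formulas at nodal configurations, which is why the paper bypasses such arguments with Morse/Floer flow-category bimodules, proves the composition identity $S_k^{\rm SSP}(\psi)\circ S_l^{\rm PSS}(\phi)=S_{k+l}^{\rm Morse}(\phi\#\psi)$, and crucially invokes ${\rm dim}_\Lambda HF_*(M;\Lambda)={\rm rank}\, H_*(M;{\mb k})$ (Theorem \ref{thm_equal_rank}, from the forthcoming Floer-theoretic work) to upgrade one-sided invertibility to genuine invertibility; note also that the classical characteristic-zero proof of invertibility uses the group-homomorphism property of the Seidel representation, which the authors explicitly do not know how to establish in the FOP framework. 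Without this input (all of Section 3), your argument only shows that the $P$-invariant equals $m$ times a fiberwise quantity that might vanish, so the proof is incomplete at its decisive step.
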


% For simplicity, we assume that $k=0$ so the base $B$ admits a nonvanishing integral two-pointed Gromov--Witten invariant with two point constraints. 
We provide a heuristic explanation of this result and indicate the technical difficulties. Very ideally, the moduli space of two-pointed stable genus $0$ maps to $B$ representing the second homology class $A \in H_2(B;{\mb Z})$, denoted by $
\ov{\mc M}_{0,2}(B,A)$, is smooth and compact, with the evaluation map
\beqn
\ev: \ov{\mc M}_{0,2}(B,A) \to B \times B
\eeqn
a local diffeomorphism of finite degrees. In this case, for any pair of distinct points $p_1, p_2 \in B$, the Gromov--Witten invariant is a signed sum of the finitely many rational curves in $\ov{\mc M}_{0,2}(B,A)$ which pass through $p_1$ and $p_2$. If $\tilde{A} \in H_2(P;{\mb Z})$ is a lift of $A$, then we see that the composition with $\pi_P$ defines a map between the moduli spaces of stable rational curves
\beqn
\ov{\mc M}_{0,2}(P,\tilde{A}) \to \ov{\mc M}_{0,2}(B,A).
\eeqn
The preimage of an element $\ev^{-1}(\{p_1\} \times \{ p_2 \}) \in \ov{\mc M}_{0,2}(B,A)$, if can be represented by a smooth rational curve $C$ in $B$, can be identified with the moduli space of stable genus $0$ holomorphic ``sections" of the fibration over the projective line, $P|_{C}$. Then, in this ideal situation, we can appeal to the nonvanishing result of Gromov--Witten section invariants of fibrations over $\mathbb{CP}^1$ \cite{seidel-rep, mcduffseidel} to find the desired invariant, at least for the ${\mb Q}$-valued version.

The technical difficulties of carrying out the above sketch in general are of a familiar kind: to compactify the moduli space of rational curves using stable maps, one needs to introduce maps with nodal domains; also, the moduli spaces are ``stacky" in nature, which means that the nontrivial automorphisms of stable maps forbid one to go beyond characteristic zero when defining enumerative invariants following the traditional approach to Gromov--Witten theory.

We resolve these obstacles using tools from symplectic topology, both classical and modern. In the monotone setting as Theorem \ref{thm:split}, one uses the classical regularization method written up systematically by McDuff--Salamon \cite{McDuff_Salamon_2004} to find a generic almost complex structure on $B$ compatible with $\omega_B$ to ensure that only smooth curves in $\ov{\mc M}{}_{0,2}(B, A)$ contributes to the relevant Gromov--Witten invariant. In the situation of Theorem \ref{thm:B} where $B$ is not necessarily monotone, we reduce the consideration to the case that $B$ is a blowup of $\mb{CP}^n$, which has a nonzero Gromov--Witten invariant realized by a single transverse smooth rational curve. In order to allow the field ${\mb k}$ to have positive characteristic, we appeal to the Fukaya--Ono--Parker (FOP) perturbation method developed by the first and third author \cite{Bai_Xu_2022, Bai_Xu_Arnold} to develop a theory of Seidel elements in finite characteristic and find the required nonzero Gromov--Witten type invariant. These two regularization methods are combined fruitfully via variants of the global Kuranishi charts of Abouzaid--McLean--Smith \cite{AMS, AMS2}.

Finally,  we note that the approach %to proving cohomological splitting in finite characteristic from 
of \cite{AMS} goes via Morava $K$-theories. It is an interesting open problem to see whether one can prove Theorem \ref{thm:split} using Gromov--Witten Morava $K$-theory invariants. We expect that to make progress along these lines, one needs to revisit the Baas--Sullivan approach \cite{Baas} to generalized cohomology theories to establish a dictionary between ordinary Gromov--Witten invariants and these generalized Gromov--Witten invariants. There are other motivations to investigate this question: via chromatic lifting techniques, one may go beyond additive splitting of ordinary cohomology. We note that \cite[Lemma 2.8]{Bai_Pomerleano_2024} implies that for any smooth projective family $P \to B$ over a product of Grassmannians and any complex oriented cohomology theory $\mathbb{E}^*$, we have: \begin{align} \mathbb{E}^*(P) \cong \mathbb{E}^*(M)\otimes_{\mathbb{E}^*(pt)}\mathbb{E}^*(B).\end{align} It would be interesting to know whether a similar splitting held in the stably rational case.

\subsection{Outline of the paper}

In Section \ref{section2}, we review the definition of integer-valued Gromov--Witten invariants given in \cite{Bai_Xu_2022} via the Fukaya--Ono--Parker (FOP) perturbation method, as well as the reduction to fields of finite characteristic. In Section \ref{section3}, we define the ${\mb k}$-valued graph Gromov--Witten invariants for Hamiltonian fibrations over $S^2$ and the corresponding Seidel map using the FOP perturbation method. In Section \ref{section4}, we state the main technical result (Theorem \ref{thm41}), which implies both Theorem \ref{thm:split} and Theorem \ref{thm:B}. In Section \ref{section5}, we prove Theorem \ref{thm41}. In Section \ref{section6}, we provide an example of a projective fibration over a Fano manifold whose cohomology does not split in characteristic $2$. 
\subsection*{Acknowledgement}
We would like to thank Sasha Kuznetsov for some help with Theorem 6.1, J{\o}rgen Vold Rennemo for a helpful email exchange concerning Remark \ref{rem:rennemo}, and Ziquan Zhuang for a discussion on weak factorizations. D.P. would like to thank Paul Seidel for an inspiring discussion at the start of this project. 

%\textcolor{purple}{Shall we also include the ${\mb Q}$ coefficient setting without assuming positivity of the base?}

%\textcolor{orange}{DP:We may lose focus on finite characteristic/integrality if we do that.}

\section{Integer-valued Gromov--Witten invariants}\label{section2}

In this section, we review the genus zero integer-valued Gromov--Witten invariants constructed in \cite{Bai_Xu_2022}. The ordinary GW invariants of a general symplectic manifold fail to be integers because of the orbifold nature of the moduli spaces of stable maps. The integer-valued GW invariants originated from the insight of Fukaya--Ono \cite{Fukaya_Ono_integer}, where they observed the possibility of using a particular kind of single-valued perturbations on Kuranishi structures on the moduli spaces of stable maps. A crucial point is that the moduli spaces carry (stable) complex structures. On the technical level the idea was further explored by B. Parker \cite{BParker_integer} who clarified crucial properties needed for the special kind of perturbations Fukaya--Ono proposed. Eventually, the first and the third author \cite{Bai_Xu_2022} polished the ideas of Fukaya--Ono and B. Parker and rigorously constructed the ${\mb Z}$-valued GW invariants in genus zero.

\subsection{Derived orbifold charts and FOP pseudocycle}

A derived orbifold chart (D-chart for short) is a triple
\beqn
{\mc C} = ({\mc U}, {\mc E}, {\mc S})
\eeqn
where ${\mc U}$ is a smooth orbifold, ${\mc E} \to {\mc U}$ is an orbifold vector bundle (known as the obstruction bundle), and ${\mc S}: {\mc U} \to {\mc E}$ is a continuous section such that ${\mc S}^{-1}(0)$ is compact. A morphism of D-charts from ${\mc C}_1$ to ${\mc C}_2$ consists of a map $\iota_{21}: {\mc U}_1 \to {\mc U}_2$ and a bundle map $\widehat \iota_{21}: {\mc E}_1 \to {\mc E}_2$ covering $\iota_{21}$ such that 
\beqn
\widehat\iota_{21} \circ {\mc S}_1 = {\mc S}_2 \circ \iota_{21}.
\eeqn
There are the following typical types of morphisms.
\begin{enumerate}
    \item Shrinking/open embedding induced by shrinking ${\mc U}$ to an open neighborhood of ${\mc S}^{-1}(0)$ and restricting ${\mc E}$ together with ${\mc S}$.

    \item Stabilization by a vector bundle. Namely, there is a vector bundle $\pi_{\mc F}: {\mc F} \to {\mc U}_1$ such that ${\mc U}_2$ is the total space of ${\mc F}$, ${\mc E}_2 = \pi_{\mc F}^* {\mc E}_1 \oplus \pi_{\mc F}^* {\mc F}$, and ${\mc S}_2 = \pi_{\mc F}^* {\mc S}_1 \oplus \tau_{\mc F}$ where $\tau_{\mc F}$ is the tautological section; moreover $\iota_{21}$ is the inclusion of the zero section and $\wh\iota_{21}$ is the embedding of ${\mc E}_1$ into the first summand of ${\mc E}_2$. 
\end{enumerate}

Moreover, a {\bf strict cobordism} from ${\mc C}_1$ to ${\mc C}_2$ is a D-chart with boundary $\tilde {\mc C} = (\tilde {\mc U}, \tilde {\mc E}, \tilde {\mc S})$ together with identifications
\beqn
\partial \tilde {\mc C} = (\partial \tilde {\mc U}, \tilde {\mc E}|_{\partial \tilde {\mc U}}, \tilde {\mc S}|_{\tilde {\mc U}}) \cong {\mc C}_1 \sqcup {\mc C}_2.
\eeqn

\begin{defn}\label{defn_chart_equivalence}
We say that two D-charts ${\mc C}_1$ and ${\mc C}_2$ are {\bf equivalent} if ${\mc C}_1$ can be connected to ${\mc C}_2$ by a zig-zag of open embeddings or stabilizations; ${\mc C}_1$ and ${\mc C}_2$ are called {\bf cobordant} if they are equivalent to a strictly cobordant pairs.\footnote{The relation induced by the shrinking morphisms is a special case of the cobordism relation. We keep the shrinking morphisms explicit because they naturally arise from transversality arguments in the geometric construction of D-charts for moduli spaces of $J$-holomorphic curves.} By \cite[Proposition 5.1]{pardon2020orbifold}, the cobordism relation defines an equivalence relation among D-charts.
\end{defn}

We expect the zero locus ${\mc S}^{-1}(0)$ to carry a fundamental class which is invariant under cobordism relations. This is the case in rational coefficients and the fundamental class is roughly the Poincar\'e dual of the Euler class of the obstruction bundle ${\mc E}$. Equivalently, a fundamental cycle can be obtained from the zero locus of a multi-valued transverse perturbation of ${\mc S}$. Fukaya--Ono's idea to obtain a fundamental class with integer coefficients is to carefully choose single-valued perturbations satisfying a more refined version of the transversality condition, although the topological nature of such a fundamental class is not completely clear.

To carry out the Fukaya--Ono--Parker (FOP) perturbation scheme, one needs a version of complex structures on orbifolds and the obstruction bundles. A {\bf normal complex structure} on an effective orbifold ${\mc U}$ associates to each orbifold chart $(U, \Gamma)$ and each subgroup $H \subset \Gamma$ an $H$-invariant complex structure on the normal bundle $NU^H \to U^H$ of the $H$-fixed point set $U^H \subset U$ such that orbifold coordinate changes respect these complex structures. If ${\mc E} \to {\mc U}$ is an orbifold vector bundle, a normal complex structure on ${\mc E}$ associates to each bundle chart $(U, E, \Gamma)$ (where $(U,\Gamma)$ is an orbifold chart and $E\to U$ is a $\Gamma$-equivariant vector bundle) and each subgroup $H\subset \Gamma$ an $H$-invariant complex structure on the subbundle $\check E^H \subset E|_{U^H}$ (which is the direct sum of nontrivial irreducible $H$-representations contained in $E|_{U^H}$) such that the bundle coordinate changes respect these complex structures. A normal complex structure on a D-chart ${\mc C} = ({\mc U}, {\mc E}, {\mc S})$ consists of a normal complex structure on ${\mc U}$ and a normal complex structure on ${\mc E}$. The notion of derived cobordism can be defined for normally complex D-charts if we require that stabilizations are via complex vector bundles.

\begin{thm}\cite{Bai_Xu_2022}\label{thm_FOP_property}
Given any normally complex and effective orbifold ${\mc U}$ and a normally complex orbifold vector bundle ${\mc E} \to {\mc U}$, there is a class of (single-valued) smooth sections, called {\bf FOP transverse sections} which satisfy the following conditions. 

\begin{enumerate}

\item The FOP transversality condition is local; moreover, over the isotropy-free part of $\mc U$, ${\mc U}_{\rm free} \subset {\mc U}$ (which is a manifold), being FOP transverse is equivalent to being transverse in the classical sense. 

\item The condition that a smooth section ${\mc S}$ is FOP transverse only depends on the behavior of ${\mc S}$ near ${\mc S}^{-1}(0)$. In particular, any smooth section ${\mc S}$ is FOP transverse away from ${\mc S}^{-1}(0)$. 

\item Given a continuous norm on ${\mc E}$, for any continuous section ${\mc S}_0: {\mc U} \to {\mc E}$ and any $\delta>0$, there exists an FOP transverse section ${\mc S}: {\mc U} \to {\mc E}$ such that
\beqn
\| {\mc S}_0 - {\mc S}\|_{C^0} \leq \delta.
\eeqn

\item {\bf (CUDV property)} Given any closed subset $C \subset {\mc U}$ and a smooth section ${\mc S}_0: O \to {\mc E}$ defined over an open neighborhood $O$ of $C$, if ${\mc S}_0$ is FOP transverse near $C$, then there exists an FOP transverse section ${\mc S}: {\mc U} \to {\mc E}$ which agrees with ${\mc S}_0$ near $C$.

\item {\bf (Stabilization property)} Suppose $\pi_{\mc F}: {\mc F} \to {\mc U}$ is an orbifold complex vector bundle and $\tau_{\mc F}: {\mc F} \to \pi_{\mc F}^* {\mc F}$ is the tautological section. If ${\mc S}: {\mc U} \to {\mc E}$ is an FOP transverse section, then the section
\beqn
\pi_{\mc F}^* {\mc S} \oplus \tau_{\mc F}: {\mc F} \to \pi_{\mc F}^* {\mc E} \oplus \pi_{\mc F}^* {\mc F}
\eeqn
is a also an FOP transverse section.

\item If ${\mc Z} \subset {\mc U}$ is a closed and proper sub-orbifold whose normal bundle is an ordinary vector bundle (i.e., the fibers as representations of stabilizers are a direct sum of trivial representations, which implies ${\mc Z}$ is also normally complex), and if ${\mc S}: {\mc Z} \to {\mc E}$ is an FOP transverse section, then there exists an FOP transverse extension of ${\mc S}$ to ${\mc U}$.

\item For any FOP transverse section ${\mc S}: {\mc U} \to {\mc E}$, the isotropy-free part of the zero locus
\beqn
{\mc S}^{-1}(0)_{\rm free}:= {\mc S}^{-1}(0) \cap {\mc U}_{\rm free}
\eeqn
is a transverse intersection in the classical sense. Moreover, its boundary
\beqn
\ov{{\mc S}^{-1}(0)_{\rm free}} \setminus {\mc S}^{-1}(0)_{\rm free} \subset {\mc U} \setminus {\mc U}_{\rm free}
\eeqn
is the union of images of smooth maps from manifolds of dimension at most ${\rm dim}{\mc U} - {\rm rank}{\mc E} - 2$.
\end{enumerate}
\end{thm}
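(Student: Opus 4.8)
The plan is to first define the class of \emph{FOP transverse sections} locally, and then verify the seven properties in turn: (1) and (2) will be essentially built into the definition, (5) and (6) will be formal consequences, and (3), (4), (7) will carry the real content. Near a point $x \in \mc{U}$ with isotropy $H = \Gamma_x$, linearize the orbifold and bundle charts (Bochner averaging / slice theorem) so that $(U,H)$ has $U$ a linear $H$-representation with $x=0$ and $\mc{E} = U \times E_0$ for an $H$-representation $E_0$; decompose $U = U^H \oplus N$ and $E_0 = E_0^H \oplus \check{E}_0$ into trivial and non-trivial summands, so that the normal complex structure equips $N$ and $\check{E}_0$ with $H$-invariant complex structures. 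Declare $\mc{S}$ to be FOP transverse at $x$ if (i) $\mc{S}|_{U^H}\colon U^H \to E_0^H$ is transverse to $0$ at $x$ in the classical sense, and (ii) the Taylor jet of $\mc{S}$ in the $N$-directions, projected to $\check{E}_0$ and truncated at a fixed degree $d$ larger than all relevant ranks, is an $H$-equivariant polynomial map $p_x\colon N \to \check{E}_0$ that is \emph{FOP-regular}, i.e.\ $p_x$ together with all of its restrictions to the fixed subspaces $N^K$ ($H \subsetneq K$) has a zero locus whose isotropy strata all have the complex codimension predicted by the relevant ranks. This is the normally complex perturbation scheme of Fukaya--Ono \cite{Fukaya_Ono_integer} as formalized by B.\ Parker \cite{BParker_integer}; the decisive algebraic fact is that inside the finite-dimensional space of $H$-equivariant polynomial maps $N \to \check{E}_0$ of degree $\le d$, the non-FOP-regular maps form a complex algebraic subset of positive complex codimension, hence of real codimension $\ge 2$. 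Granting chart-independence --- which holds because coordinate changes preserve normal complex structures by hypothesis, and because over $\mc{U}_{\mathrm{free}}$ only $H=1$ occurs --- property (1) follows; property (2) is immediate since (i) and (ii) are vacuous away from $\mc{S}^{-1}(0)$.

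The analytic heart is the simultaneous proof of existence (3) and the CUDV extension property (4), which I would obtain by induction over the finitely many isotropy strata of $\mc{U}$ ordered by decreasing isotropy, starting from the deepest stratum (for (3), starting from a $C^0$-small smooth approximation of the given continuous $\mc{S}_0$). At the step treating a stratum $\Sigma$, one works in a tubular neighbourhood of $\Sigma$ adapted to the normal complex structure; the section is already FOP transverse near the lower strata lying in $\overline{\Sigma}$ and must be kept fixed there, so one perturbs $\mc{S}$ by a cut-off $H$-equivariant section that is fiberwise a complex polynomial in the normal-to-$\Sigma$ directions and vanishes to high order near those lower strata. A Sard-type argument over this finite-dimensional parameter space, exploiting the positive complex codimension of the non-FOP-regular locus, yields an arbitrarily $C^0$-small such perturbation after which $\mc{S}$ is FOP transverse along $\Sigma$ as well; property (2) lets one localize near $\mc{S}^{-1}(0)$ and keep compactness. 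For (4) one additionally cuts off near the prescribed closed set $C$ and uses locality to leave $\mc{S}$ untouched there. I expect this to be the main obstacle: one must organize the induction so that perturbing to achieve regularity along $\Sigma$ never destroys FOP transversality at strata buried deeper in $\overline{\Sigma}$, which forces every perturbation to vanish to high order along neighbourhoods of all deeper strata and hence requires the normal-form packages around nested strata to be mutually compatible --- exactly where the normally complex hypothesis and the local structure of orbifold charts must be used carefully.

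Properties (5) and (6) are then formal. For stabilization (5), the stabilized D-chart's normal complex structure is the direct sum of the pullback of that on $\mc{U}$ with the complex fiber of $\mc{F}$, and the tautological section $\tau_{\mc F}$ is fiberwise complex-linear; as a complex-linear isomorphism is trivially FOP-regular, the normal polynomial jet of $\pi_{\mc F}^*\mc{S} \oplus \tau_{\mc F}$ is FOP-regular along every stratum precisely when that of $\mc{S}$ is, and the classical part of the transversality splits off as well. For the extension across a proper closed sub-orbifold $\mc{Z} \subset \mc{U}$ whose normal bundle is an ordinary vector bundle (6), a tubular neighbourhood of $\mc{Z}$ is the total space of that ordinary bundle and the isotropy stratification of $\mc{U}$ near $\mc{Z}$ is the product of the stratification of $\mc{Z}$ with the normal fiber; extend the given FOP transverse $\mc{S}$ on $\mc{Z}$ to this neighbourhood by the pullback that is constant in the normal directions, note that those directions are trivial representations and therefore contribute nothing to the normal complex polynomial jets --- they enter only the classical part, where transversality is inherited from $\mc{Z}$ --- so the extension is FOP transverse on the neighbourhood, and finally invoke (4) to extend it FOP transversely to all of $\mc{U}$.

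The boundary statement (7) is a dimension count on top of the definition. Over $\mc{U}_{\mathrm{free}}$, condition (i) is classical transversality, so $\mc{S}^{-1}(0)_{\mathrm{free}}$ is a manifold of dimension $\dim \mc{U} - \operatorname{rank}\mc{E}$. For the boundary, stratify $\overline{\mc{S}^{-1}(0)_{\mathrm{free}}} \setminus \mc{S}^{-1}(0)_{\mathrm{free}}$ by the singular isotropy strata it meets. Over such a stratum $\Sigma$ with group $H \neq 1$, in a normal-form chart the zero locus of $\mc{S}$ fibers over the classical transverse zero locus of $\mc{S}|_{U^H}$ in $U^H$ (a manifold of dimension $\dim U^H - \operatorname{rank}E_0^H$) with fiber the zero locus in the complex normal slice $N$ of the FOP-regular polynomial $p\colon N \to \check{E}_0$; FOP-regularity forces the free zeros of $p$ to have complex dimension at most $\dim_{\mb C} N - \operatorname{rank}_{\mb C}\check{E}_0$, so the part of $p^{-1}(0)$ lying in the closure of the free zeros but over the non-free locus of the slice --- the only part that can be a limit of free zeros of $\mc{S}$, since $0 \in N$ is $H$-fixed --- has complex dimension at most $\dim_{\mb C} N - \operatorname{rank}_{\mb C}\check{E}_0 - 1$. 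Using $\dim \mc{U} = \dim U^H + 2\dim_{\mb C} N$ and $\operatorname{rank}\mc{E} = \operatorname{rank}E_0^H + 2\operatorname{rank}_{\mb C}\check{E}_0$, the corresponding boundary piece is the image of a smooth map from a manifold of real dimension at most $\dim\mc{U} - \operatorname{rank}\mc{E} - 2$; summing over the finitely many strata gives (7). Thus the only genuinely hard step is the inductive construction behind (3) and (4); everything else is either definitional or rests on this bounded dimension count.
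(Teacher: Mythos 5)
Note first that the paper you are working from does not prove this theorem at all: it is quoted verbatim from \cite{Bai_Xu_2022}, so the only fair comparison is with the construction in that reference (Fukaya--Ono, B.~Parker, Bai--Xu). Your outline follows the right general architecture --- linearized charts with an $H$-fixed/normal splitting, equivariant polynomial jets of the normal part, induction over isotropy strata for existence and extension, stabilization as a formal consequence, and a dimension count for the frontier statement --- but the core definition you propose is not the one used there, and the difference is not cosmetic.

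The gap is in your condition (ii) and in how you use it in (3), (4) and (7). You define FOP transversality by asking that at each point the truncated normal jet $p_x$ \emph{avoid} the algebraic subset of non-regular $H$-equivariant polynomial maps, and you justify genericity by the fact that this bad set has positive complex, hence real codimension at least $2$. But codimension $2$ is exactly what cannot be avoided: the jet is a map from the fixed-point locus (or from the zero set of the fixed part of $\mc S$, which typically has dimension ${\rm dim}\,U^H-{\rm rank}\,E_0^H\geq 2$) into the space of equivariant polynomials, and a generic such map meets a real-codimension-$2$ algebraic subset in a nonempty set whenever the source has dimension at least $2$. So no $C^0$-small perturbation can achieve your pointwise condition in general, and your Sard-type argument only yields \emph{transversality} of the jet map to the bad set, not avoidance; with your definition, properties (3) and (4) fail. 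In \cite{Bai_Xu_2022} the definition is precisely the transversality statement: the fiberwise $d$-jet of the normal part, viewed as a map from the fixed locus into the space of equivariant polynomial maps, is required to be transverse to a stratification of the non-regular locus (together with classical transversality of the fixed part), and the bound ${\rm dim}\,\mc U-{\rm rank}\,\mc E-2$ in (7) comes from the fact that the relevant strata and the excess parts of the zero locus are complex, hence of even real codimension $\geq 2$ --- they are controlled, not avoided. With that corrected definition your inductive scheme for (3)--(4) and your treatment of (5)--(6) become essentially the argument of \cite{Bai_Xu_2022}, but then the stabilization property and the frontier estimate require checking compatibility of these stratifications under suspension by a complex bundle and a more careful count than the one you give, which is where the real work in that paper lies. (A small side remark: the restrictions of $p_x$ should be taken to fixed subspaces $N^K$ for subgroups $K\subset H$; for $K\supsetneq H$ one has $N^K\subset N^H=0$.)
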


Now if we are given a normally complex effective derived orbifold chart $({\mc U}, {\mc E}, {\mc S})$, we can choose an FOP transverse section ${\mc S}'$ which agrees with ${\mc S}$ outside a compact neighborhood of ${\mc S}^{-1}(0)$. Then $({\mc S}')^{-1}(0)$ is compact and the isotropy-free part
\beqn
({\mc S}')^{-1}(0)_{\rm free}
\eeqn
is a pseudocycle \footnote{The notion of pseudocyle in orbifolds is defined in \cite{Bai_Xu_2022}.} of dimension ${\rm dim} {\mc U} - {\rm rank} {\mc E}$. Hence if ${\mc U}$ and ${\mc E}$ are oriented, this pseudocycle represents an integral homology class in ${\mc U}$. Moreover, given any two FOP transverse perturbations ${\mc S}_1', {\mc S}_2'$, the two pseudocycles $(S_1')^{-1}(0)_{\rm free}$ and $(S_2')^{-1}(0)_{\rm free}$ are cobordant. Therefore the integral homology class, which we call the FOP Euler class
\beqn
\chi^{\rm FOP}({\mc C}) \in H_*({\mc U}; {\mb Z}),
\eeqn
is well-defined. 

One typically needs to push forward the homology class into another space. To connect with classical cobordism theory, we restrict to the case when the derived orbifold charts are {\it stably complex}, meaning that the virtual vector bundle $T{\mc U} - {\mc E}$ has a stable complex structure (see \cite[Definition 6.11, 6.14]{Bai_Xu_2022}). In particular, if $T{\mc U}$ and ${\mc E}$ are both complex vector bundles, then ${\mc C} = ({\mc U}, {\mc E}, {\mc S})$ is automatically stably complex and normally complex. In the special case when ${\mc U}$ is a manifold and ${\mc S}: {\mc U} \to {\mc E}$ is transverse, then a stable complex structure on $({\mc U}, {\mc E}, {\mc S})$ induces a stable complex structure on the manifold ${\mc S}^{-1}(0)$. On the other hand, if a derived orbifold chart ${\mc C}$ is stably complex, then via a stabilization, it is equivalent to a normally complex one.

For any topological space $X$, the stably complex derived orbifold bordism group 
\beqn
\Omega^{{\rm der}, {\mb C}}_k(X)
\eeqn
is the abelian group generated by isomorphism classes of quadruples $({\mc U}, {\mc E}, {\mc S}, f)$ where $({\mc U}, {\mc E}, {\mc S})$ is an stably complex D-chart of virtual dimension ${\rm dim} {\mc U} - {\rm rank} {\mc E} = k$ and $f: {\mc U}\to X$ is a continuous map, modulo the equivalence relation induced from stabilization by complex vector bundles and cobordism respecting the stable complex structures. The assignment $\Omega^{{\rm der}, {\mb C}}_{*}(-)$ actually defines a generalized homology theory. Then the pushforward of the FOP pseudocycle (and its homology class) induces a natural transformation of generalized homology theories
\beqn
\Omega^{{\rm der}, {\mb C}}_{*}(-) \to H_{*}(-; {\mb Z}).
\eeqn

In fact, when the target space is a manifold, this natural transformation factors through pseudocycles. As we do not know if pseudocycles up to cobordism is a homology theory or not, we only consider the naive properties. Namely, for any manifold $X$, one has a group homomorphism
\beq\label{eqn:FOP-transform}
\Omega_k^{{\rm deg}, {\mb C}}(X) \to {\mc H}_k(X)
\eeq
where ${\mc H}_k(X)$ is the abelian group of $k$-dimensional oriented pseudocycles up to cobordism (see Appendix).

\subsection{AMS charts and the integer-valued GW invariants}

Let $(X, \omega)$ be a compact symplectic manifold and $J$ be an $\omega$-compatible almost complex structure. Given $k \geq 0$ and $A \in H_2(X; {\mb Z})$, consider the moduli space of stable $J$-holomorphic spheres with $k$ marked points in class $A$, denoted by 
\beqn
\ov{\mc M}_{0, k}(X, J, A).
\eeqn
There is a continuous evaluation map 
\beqn
\ev: \ov{\mc M}_{0, k}(X, J, A) \to X^k.
\eeqn
A {\bf D-chart lift} of $(\ov{\mc M}_{0, k}(X, J, A), \ev)$ is a tuple
\beqn
({\mc U}, {\mc E}, {\mc S}, \psi, \wt \ev)
\eeqn
where $({\mc U}, {\mc E}, {\mc S})$ is a D-chart, $\psi$ is an isomorphism of orbispaces from ${\mc S}^{-1}(0)$ to $\ov{\mc M}_{0, k}(X, J, A)$, and $\wt\ev: {\mc U} \to X^k$ is an extension of the evaluation map to ${\mc U}$. In \cite{AMS, AMS2} (see also \cite{Hirschi_Swaminathan}) a class of global Kuranishi charts (see Definition \ref{defn_Kchart}) on $\ov{\mc M}{}_{0, k}(X, J, A)$ were constructed, inducing a class of derived orbifold charts lifts (which we call AMS D-charts). Moreover, for such a chart, one can make sure that both $T{\mc U}$ and $T{\mc E}$ are complex vector bundles. In particular, the AMS D-charts are stably complex. The construction of an AMS chart depends on various choices but the equivalence class does not. Hence the quadruple $({\mc U}, {\mc E}, {\mc S}, \tilde \ev)$ induces an element of $\Omega_{ i(A)}^{{\rm der}, {\mb C}}(X^k)$ where $i(A)$ is the expected dimension of $\ov{\mc M}{}_{0, k}(X, J, A)$. Moreover, if $(\omega, J)$ can be deformed to a pair $(\omega', J')$ via a smooth path $(\omega_t, J_t)$, then the resulting AMS charts are derived cobordant. Therefore the corresponding element in $\Omega_{ i(A)}^{{\rm der}, {\mb C}}(X^k)$, only depends on the deformation class of $\omega$. By applying the map \eqref{eqn:FOP-transform}, we obtain a cobordism class of oriented integral pseudocycles
\beqn
\GW^{\mb Z}_{0, k}(X, A)^{\rm vir} \in {\mc H}_k(X).
\eeqn

\subsubsection{Reduction to characteristic $p$}

For the purpose of this article, we would like to reduce the integral cycle and invariants to characteristic $p$. Let ${\mb k}$ be a field. For any $n$-dimensional compact oriented manifold $Y$, there holds the Poincar\'e duality 
\beqn
H_k (Y; {\mb k}) \cong {\rm Hom}(H^k(Y; {\mb k}), {\mb k}).
\eeqn
Abusing the notation, the cobordism class of the integral Gromov--Witten pseudocycle induces a well-defined integral homology class
\beqn
\GW_{0, k}^{\mb Z}(X, A)^{\rm vir} \in H_{i(A)}(X^k; {\mb Z}).
\eeqn
By the discussion in Appendix \ref{app:A}, the associated integral pseudocycle also defines a pseudocycle with characteristic $p$ coefficients, which induces a homology class with $\mathbb{k}$-coefficient
\beqn
\GW_{0, k}^{\mb k}(X, A)^{\rm vir} \in H_{i(A)}(X^k; {\mb k}).
\eeqn
Hence we can define the correlator in ${\mb k}$-coefficients as 
\beqn
\GW^{\mb k}_{0, k} (\alpha_1, \ldots, \alpha_k) = \GW^{\mb k}_{0, k}(X, A)^{\rm vir} \cap (\alpha_1 \times \cdots \times \alpha_k),\ \forall \alpha_1, \ldots, \alpha_k \in H^*(X; {\mb k}).
\eeqn

\begin{rem}

\begin{enumerate}
    \item When ${\rm char}({\mb k}) = p$, even when all $\alpha_i$ come from integral cohomology, the above ${\mb k}$-valued GW invariants are not necessarily the mod $p$ reductions of the corresponding ${\mb Z}$-valued GW invariants, as $\alpha_i$ might be a $p$-torsion class in integral cohomology. In this situation, the correlator over ${\mb Z}$ has to vanish.

    \item When ${\mb k} = {\mb Q}$, the above invariants do not necessarily match the ordinary GW invariants. They do match when the symplectic manifold $(X, \omega)$ is semi-positive.
    
\end{enumerate}
\end{rem}

\section{Seidel maps}\label{section3}

In this section, we define a version of Seidel representations in finite characteristic. First recall the original construction. Given a loop of Hamiltonian diffeomorphisms $\phi = (\phi_t)_{t \in S^1}$ on a compact symplectic manifold $(M, \omega_M)$, one can construct a Hamiltonian fibration 
\beqn
\pi_\phi: \tilde M_\phi \to S^2
\eeqn
with fibers being $M$ and clutching function along the equator being $\phi_t$. Endowing $S^2$ with the standard complex structure $J_{S^2}$, we can choose an almost complex structure $\tilde J_\phi$ on $\tilde M_\phi$ such that the projection $\pi_\phi$ is pseudo-holomorphic, the vertical tangent spaces are invariant under the action of $\tilde J_\phi$, and the fiberwise almost complex structure is tamed by the fiberwise symplectic form. One can consider the moduli space of $\tilde J_{\phi}$-holomorphic graphs, namely sections
\beqn
\tilde u: S^2 \to \tilde M_\phi
\eeqn
such that 
\beqn
\tilde J_\phi \circ d \tilde u = d\tilde u \circ J_{S^2}.
\eeqn
Let $\pi_2^{\rm graph} (\tilde M_\phi)$ be the set of homotopy classes of sections of $\tilde M_\phi$, i.e., elements in $\pi_{2}(\tilde{M}_{\phi})$ whose composition with $(\pi_{\phi})_*$ represent the generator of $\pi_{2}(S^2)$. For each $\tilde A \in \pi_2^{\rm graph} (\tilde M_\phi)$, let 
\beqn
\ov{\mc M}{}_{0,2}^{\rm graph} (\tilde J_{\phi}, \tilde A)
\eeqn
be the moduli space of stable $\tilde J_{\phi}$-holomorphic graphs of class $\tilde A$ with two marked points that are fixed to be $z_0 = 0, z_\infty = \infty$ in the domain $S^2$. The way to compactify the graph moduli space is to view the bubbles as holomorphic spheres in the fibers. Then there are evaluation maps
\beqn
\ev:= \ev_0 \times \ev_\infty: \ov{\mc M}{}_{0,2}^{\rm graph} (\tilde J_{\phi}, \tilde A) \to \tilde M_\phi|_{z_0} \times \tilde M_\phi|_{z_\infty} \cong M \times M.
\eeqn

If we assume that $(M, \omega_M)$ satisfies the $\mathbf{W}^+$ condition, a condition slightly stronger than semi-positivity (cf. \cite{seidel-rep}), then one can choose $\tilde J$ such that $\ev$ defines a pseudocycle. Using isomorphisms
\beqn
H^*(\tilde M_\phi|_{z_0}; {\mb Z}) \cong H^*(M; {\mb Z}) \cong H^* (\tilde M_\phi|_{z_\infty}; {\mb Z}),
\eeqn
and the intersection numbers between transverse pseudocycles, one can define the graph Gromov--Witten invariants 
\beqn
\GGW_{0, 2}^{\tilde M_\phi, \tilde A}(\alpha_0, \alpha_\infty) \in {\mb Z},\ \alpha_0, \alpha_\infty \in H^*(M; {\mb Z})
\eeqn
which is independent of the choice of almost complex structures and pseudocycle representatives: see the original construction \cite{seidel-rep}.

If we drop Seidel's ${\bf W}^+$ condition on $M$, one can construct a graph virtual cycle with rational coefficients (cf. \cite{mcduffseidel}), leading to ${\mb Q}$-valued graph Gromov--Witten invariants. These GW invariants can be made into a generating series which leads to an invertible element in quantum cohomology of $M$, and from different homotopy classes for Hamiltonian loops, they can be shown to define a group homomorphism
\beqn
\pi_{1}(\mathrm{Ham}(M, \omega_M)) \to QH^*(M)^{\times},
\eeqn
known as the Seidel representation.

In this paper, we will consider a variant of the graph moduli space. Suppose ${\bf z} = (z_1, \ldots, z_k)$ is a $k$-tuple of distinct points of $S^2 \setminus \{0, \infty\}$. One can consider the moduli space 
\beqn
{\mc M}{}_{0,2, {\bf z}}^{\rm graph} (\tilde J_{\phi}, \tilde A)
\eeqn
of $\tilde J_\phi$-holomorphic graphs $u: S^2 \to \tilde M_\phi$ with fixed marked $k+2$ marked points, ${\bf z}$ together with $z_0 = 0$ and $z_\infty = \infty$. Its compactification, denoted by $\ov{\mc M}{}_{0,2, {\bf z}}^{\rm graph} (\tilde J_{\phi}, \tilde A)$, is not exactly identical to $\ov{\mc M}{}_{0,2}^{\rm graph} (\tilde J_{\phi}, \tilde A)$. There is still the evaluation map 
\beqn
\ev:= \ev_0 \times \ev_\infty: \ov{\mc M}{}_{0,2, {\bf z}}^{\rm graph} (\tilde J_{\phi}, \tilde A) \to \tilde M_\phi|_{z_0} \times \tilde M_\phi|_{z_\infty} \cong M \times M
\eeqn
which pushes forward an integral virtual fundamental cycle that we will construct shortly.

\subsection{Global Kuranishi charts for graph moduli spaces}

To define graph Gromov--Witten invariants for general symplectic symplectic manifolds in arbitrary field coefficients, we would like to adapt the Abouzaid--McLean--Smith construction of global Kuranishi charts to the graph moduli spaces.

\begin{prop}\label{prop:d-chart}
There is a derived orbifold chart ${\mc C} = ({\mc U}, {\mc E}, {\mc S})$ with stable complex structure and a submersion
\beqn
\tilde{\ev}: {\mc U} \to M_0 \times M_\infty.
\eeqn
such that $({\mc U}, {\mc E}, {\mc S}, \tilde{\ev})$ is a D-chart lift of $(\ov{\mc M}{}_{0,2, {\bf z}}^{\rm graph} (\tilde J_{\phi}, \tilde A), \ev)$.
\end{prop}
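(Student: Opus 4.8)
The strategy is to mimic the Abouzaid--McLean--Smith construction of global Kuranishi charts for $\ov{\mc M}_{0,k}(X, J, A)$, adapting it to the graph setting where the target is the total space $\tilde M_\phi$ of the Hamiltonian fibration over $S^2$, the almost complex structure is $\tilde J_\phi$, and one restricts to the homotopy classes $\tilde A \in \pi_2^{\rm graph}(\tilde M_\phi)$ of sections while fixing the $k+2$ marked points ${\bf z}, z_0, z_\infty$ on the base $S^2$. First I would recall the AMS recipe: choose an auxiliary $\tilde J_\phi$-linear Cauchy--Riemann-type datum so that, after twisting by a sufficiently positive line bundle pulled back along the maps, every stable graph becomes ``framed'' by a basis of the space of holomorphic sections of that twist; this produces a finite-dimensional space carrying a $\mathit{GL}$- or $U$-action, whose quotient-type construction gives the orbifold $\mc U$, and the linearized Cauchy--Riemann operator (together with the obstruction spaces from the framing) assembles into the obstruction bundle $\mc E \to \mc U$, with $\mc S$ the section whose zero locus recovers $\ov{\mc M}{}_{0,2,{\bf z}}^{\rm graph}(\tilde J_\phi, \tilde A)$ via the tautological isomorphism $\psi$.

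The next step is to check that the two structural features demanded by the proposition survive this adaptation. For the stable complex structure: since the domain curves are genus zero, the base $S^2$ is fixed, and the relevant deformation/obstruction spaces are $\bar\partial$-cohomology groups of holomorphic vector bundles on nodal genus-zero curves, both $T\mc U$ and $\mc E$ are naturally complex vector bundles exactly as in \cite{AMS}; hence $\mc C = (\mc U, \mc E, \mc S)$ is automatically normally complex and stably complex, so this part is essentially formal once the chart is built. For the submersivity of $\tilde\ev = \tilde\ev_0 \times \tilde\ev_\infty : \mc U \to M_0 \times M_\infty$: here one uses that the marked points $z_0 = 0$ and $z_\infty = \infty$ lie on the principal (section) component of every stable graph — they are never on a fiber bubble, by the way graphs are compactified — and that $\mc U$ is an open (thickened) neighborhood on which one has full freedom to move the value of a section at two distinct fixed points of the base. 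Concretely, enlarging the framing data or adding a stabilizing vector bundle so that the evaluation at $z_0$ and $z_\infty$ becomes a submersion on the thickening is exactly the ``evaluation submersivity'' step in the AMS/Hirschi--Swaminathan framework; I would invoke that mechanism, noting that $M_0 \cong M \cong M_\infty$ canonically via the fiber identifications $\tilde M_\phi|_{z_0}, \tilde M_\phi|_{z_\infty}$.

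The main obstacle I anticipate is \emph{not} the abstract chart construction but the bookkeeping around the graph condition: one must ensure that the compactification $\ov{\mc M}{}_{0,2,{\bf z}}^{\rm graph}(\tilde J_\phi, \tilde A)$ — where fixing the $k$ extra points ${\bf z}$ rigidifies the base component so that only fiberwise sphere bubbles can form — is the honest zero locus $\mc S^{-1}(0)$ of a single global chart, with the correct topology and the correct virtual dimension $i(\tilde A)$, and that the auxiliary positive line bundle can be chosen equivariantly and consistently across all strata (principal section component plus trees of fiber bubbles) so that the framing spaces vary continuously and the $\bar\partial$-gluing estimates hold uniformly. In particular, one needs the line bundle to be positive enough on the section class $\tilde A$ \emph{and} on every fiber class that can bubble off, which requires a uniform energy bound; this is where the monotonicity/semi-positivity-free nature of the argument must be handled carefully, exactly as in the original AMS treatment of Hamiltonian fibrations over $S^2$. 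Once these uniformities are in place, the gluing analysis and the verification that $\psi$ is an isomorphism of orbispaces are routine adaptations of \cite{AMS, AMS2} (and \cite{Hirschi_Swaminathan}), so I would state them as such and refer the reader there for the analytic details.
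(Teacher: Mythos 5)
Your proposal takes essentially the same route as the paper: an AMS-style global Kuranishi chart built from framings by holomorphic sections of the line bundle associated to (an integral approximation of) the coupling form, carrying a $U(d+1)$-action, with obstruction bundle $W \oplus T\tilde B_d \oplus \mathfrak{g}_d$, the stable complex structure coming from the complex linearity of the linearized data, and submersivity of the extended evaluation map arranged by a further stabilization exactly as in Lemma 4.5 of Abouzaid--McLean--Smith. The two points you gloss over --- replacing $\tilde\Omega_\phi$ by a rational (then integral) approximation so the framing line bundle exists, and passing from the topological thickening to a smooth $G_d$-manifold via Lashof's stable equivariant smoothing theorem --- are precisely the steps the paper spells out, but they belong to the AMS machinery you invoke, so your plan is sound.
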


Our construction resembles that of Abouzaid--McLean--Smith \cite{AMS2}. That is, we first construct a global Kuranishi chart (defined below) and then reduce to a derived orbifold chart.

\begin{defn}\label{defn_Kchart}
Let ${\mc M}$ be a topological space. 
\begin{enumerate}

\item A smooth (resp. topological) {\bf global Kuranishi chart} for ${\mc M}$ is a tuple 
\beqn
K = (G, V, E, S, \Psi)
\eeqn
where $G$ is a compact Lie group, $V$ is a smooth (resp. topological) $G$-manifold with at most finite isotropy groups, $E \to V$ is a smooth (resp. topological) $G$-equivariant vector bundle, $S: V \to E$ is a continuous $G$-equivariant section, and $\Psi: S^{-1}(0)/G \to {\mc M}$ is a homeomorphism.

\item An {\bf almost complex structure} on a smooth global Kuranishi chart $K = (G, V, E, S, \Psi)$ consists of a $G$-invariant complex structure on the bundle $TV \oplus {\mf g}$ and a $G$-invariant complex structure on $E$.

\item A {\bf singular global Kuranishi chart} over ${\mc M}$ is a tuple $K$ satisfying the same conditions for topological global Kuranishi chart as above except that $V$ is not required to be a topological manifold. $K$ is said to be regular/smooth over a $G$-invariant open subset $V' \subset V$ if $V'$ is a topological/smooth $G$-manifold. $K$ is said to be almost complex over a $G$-invariant open subset $V' \subset V$ if $(G, V', E|_{V'}, S|_{V'}, \Psi|_{V'})$ is a smooth global Kuranishi chart with an almost complex structure.
\end{enumerate}
\end{defn}

The notion of singular global Kuranishi charts will not be used until the proof of the main theorem in the next section.

One can see that given a smooth global Kuranishi chart as above, the quotient 
\beqn
{\mc C} = ({\mc U}, {\mc E}, {\mc S}, \psi) := (V/G, E/G, S/G, \Psi/G)
\eeqn
is a derived orbifold chart. Moreover, if the given global Kuranishi chart has an almost complex structure, the resulting derived orbifold chart is stably complex.

\subsubsection{Construction of global Kuranishi charts}
We describe how to construct an Abouzaid--McLean--Smith style global Kuranishi chart for the moduli space $\ov{\mc M}{}_{0,2, {\bf z}}^{\rm graph} (\tilde J_{\phi}, \tilde A)$.

First, we need to modify the topological energy. Upon choosing a Hamiltonian connection on $\tilde M_\phi$, one obtains a coupling form $\tilde \omega_\phi \in \Omega^2(\tilde M_\phi)$ which is closed and whose fiberwise restriction is the symplectic form of $M$. The cohomology class of $\tilde \omega_\phi$ in $\tilde M_\phi$ may not be rational. We find a rational approximation, i.e., a closed 2-form $\tilde\Omega_\phi \in \Omega^2(\tilde M_\phi)$ which is sufficiently close to $\tilde \omega_\phi$ and whose cohomology class is rational. One can guarantee that the fiberwise restrictions of the almost complex structure $\tilde J_\phi$ is still tamed by the fiberwise restrictions of $\tilde\Omega_\phi$. Then after an integral rescaling, one can assume that the cohomology class of $\tilde \Omega_\phi$ is integral.

Next, consider the moduli space of stable genus 0 holomorphic sections of the trivial fibration $\mathbb{CP}^d \times \mathbb{CP}^1 \to \mathbb{CP}^1$ whose projection to the $\mathbb{CP}^d$ factor represents $d$ times the generator of the second homology group, and we endow the parametrized domain with $k+2$ fixed marked points, two of which are identified with $0$ and $\infty$ and the other $k$ marked points are denoted by ${\bf z} = (z_1, \ldots, z_k)$, being a $k$-tuple of distinct points of $S^2 \setminus \{0, \infty\}$. This moduli space is denoted by
\beqn
\ov{\mc M}{}_{0,2, {\bf z}}^{\rm graph}(\mb{CP}^d, d).
\eeqn
Let $\tilde B_d \subset \ov{\mc M}{}_{0, 2, {\bf z}}^{\rm graph}(\mb{CP}^d, d)$ be the subset of curves whose images under the projection to $\mathbb{CP}^d$ are not contained in any hyperplane of $\mb{CP}^d$. Then $\tilde B_d$ is a smooth complex manifold with an action by $G_d: = U(d+1)$ via projective linear transformations. The universal curve
\beqn
\tilde C_d \to \tilde B_d
\eeqn
is naturally $G_d$-equivariant. For each $\rho \in \tilde B_d$, let $C_\rho \subset \tilde C_d$ be the domain of the fiber, which is a prestable genus zero curve with a parametrized componennt (called the {\bf principal component}). Consider smooth maps $u: C_\rho \to \tilde M_\phi$ such that after composing with $\tilde M_\phi \to S^2$, it contracts all components except that it identifies the principal components with the base $S^2$. To each such map $u$ there is an associated section
\beqn
\ov\partial_{\tilde J_\phi} u \in \Omega^{0,1}(C_\rho, u^* T^{\rm vert} \tilde M_\phi).
\eeqn

We follow \cite{AMS2} to thicken up the moduli space. Let $\tilde C_d^* \subset \tilde C_d$ be the complement of nodes and marked points, which is a smooth $G_d$-manifold. A {\bf finite-dimensional approximation scheme} on $\tilde C_d$ is a finite-dimensional complex $G_d$-representation $W$ and a $G_d$-equivariant linear map
\beqn
\iota: W \to C^\infty_c  \Big( \tilde C_d^* \times \tilde M_\phi, \Omega^{0,1}_{\tilde C_d^*/ \tilde B_d} \otimes T^{\rm vert} \tilde M_\phi \Big)
\eeqn
(where, in contrast to \cite{AMS2}, we do not require that the image of $\iota$ surjects onto the obstruction space). Then for each $e \in W$ and $\rho \in \tilde B_d$, one obtains a section 
\beqn
\iota (e)|_{C_\rho} \in C^\infty_c (C_\rho^* \times \tilde M_\phi, \Omega^{0,1}_{C_\rho} \otimes T^{\rm vert} \tilde M_\phi).
\eeqn
For any $u: C_\rho \to \tilde M_\phi$, the restriction to the graph of $u$
\beqn
\iota (e)(u) \in \Omega^{0,1}(C_\rho, u^* T^{\rm vert} \tilde M_\phi)
\eeqn
lies in the same space as where $\ov\partial_{\tilde J_\phi}(u)$ lives in. 

Now we define the so-called {\bf pre-thickening}. For $d = d(\tilde A) = \langle \tilde \Omega_\phi, \tilde A \rangle \in {\mb Z}$, consider triples $(\rho, u, e)$ where $\rho \in \tilde B_d$, $u: C_\rho \to \tilde M_\phi$ is a smooth graph, and $e \in W$ satisfying 1)
\beqn
\ov\partial_{\tilde J_\phi} u + \iota(e)(u) = 0,
\eeqn
and 2) the homotopy class of the graph $u$ is $\tilde A$. Denote by $\tilde V^{\rm pre}$ the moduli space of such triples endowed with the topology induced from the Hausdorff topology of closed subsets in $C_\rho \times \tilde M_\phi$ using the graph of $u$. There are natural maps 
\beqn
\tilde S^{\rm pre}: \tilde V^{\rm pre} \to W,\ \tilde S^{\rm pre}(\rho, u, e) = e
\eeqn
and 
\beqn
\tilde \Psi^{\rm pre}: (\tilde S^{\rm pre})^{-1}(0) \to \ov{\mc M}{}_{0, 2, {\bf z}}^{\rm graph}( \tilde J_\phi, \tilde A),\ (\rho, u, e) \to [C_\rho, u].
\eeqn

\begin{lemma}
There exists a finite-dimensional approximation scheme $(W, \iota)$ such that $\tilde V^{\rm pre}$ is a topological manifold near $(\tilde S^{\rm pre})^{-1}(0)$ of dimension $j(\tilde A) + {\rm dim} \tilde B_d + {\rm dim} W$ where 
\beqn
j (\tilde A ) = {\rm dim} M + 2 c_1(T^{\rm vert} \tilde M_\phi) (\tilde A)
\eeqn
is the expected dimension of the moduli space of holomorphic graphs in the class $\tilde A$. % Moreover, the $G_d$-action on $\tilde V^{\rm P^{\rm graph}$ induced from the $G_d$-action on the universal curve $\tilde C_d$ and the $G_d$-action on the space $W$ has only finite stabilizers near the zero section. 
\end{lemma}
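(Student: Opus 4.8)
The plan is to carry out the Abouzaid--McLean--Smith thickening construction of \cite{AMS2}, adapted to the graph moduli space $\ov{\mc M}{}_{0,2,{\bf z}}^{\rm graph}(\tilde J_\phi, \tilde A)$: I would choose the representation $W$ and the map $\iota$ so that the linearization of the thickened, fibrewise Cauchy--Riemann equation is surjective at every point of the (compact) zero locus, and then read off the local manifold structure and its dimension from the implicit function theorem.

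First I would set up the analytic framework. Fix $p>2$; over a local trivialisation of the universal curve $\tilde C_d \to \tilde B_d$, together with the standard gluing neighbourhoods of the strata of $\tilde B_d$ where $C_\rho$ is nodal, the triples $(\rho, u, e)$ with $u$ of class $W^{1,p}$ a graph over $C_\rho$ form a (topological) Banach manifold $\mathcal{B}$, and
\beqn
\mathcal{F}(\rho, u, e) := \ov\partial_{\tilde J_\phi} u + \iota(e)(u)
\eeqn
is a continuous section of the Banach bundle whose fibre over $(\rho,u,e)$ is $W^{0,p}\bigl(C_\rho, \Omega^{0,1}_{C_\rho}\otimes u^*T^{\rm vert}\tilde M_\phi\bigr)$, with $\tilde V^{\rm pre} = \mathcal{F}^{-1}(0)$. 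At a zero $(\rho, u, 0)$ the vertical differential of $\mathcal{F}$ has the shape
\beqn
(v, \xi, \epsilon) \longmapsto D^{\rm vert}_u \xi + D_\rho v + \iota(\epsilon)(u),
\eeqn
where $D^{\rm vert}_u$ is the linearised vertical Cauchy--Riemann operator along $u$, $v \in T_\rho\tilde B_d$, and $D_\rho$ encodes the dependence of $\ov\partial_{\tilde J_\phi}$ on the domain. By Riemann--Roch on the genus-$0$, possibly nodal curve $C_\rho$ (each node contributing $-\dim M$ and each component $2c_1$ of its sub-class plus $\dim M$, so the count is independent of the nodal structure), $D^{\rm vert}_u$ is Fredholm of real index $\dim M + 2c_1(T^{\rm vert}\tilde M_\phi)(\tilde A) = j(\tilde A)$; adjoining the finite-dimensional summands $T_\rho\tilde B_d$ and $W$ makes the full linearisation Fredholm of index $j(\tilde A) + \dim\tilde B_d + \dim W$.

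Next I would choose $(W, \iota)$ to force surjectivity of this linearisation along $(\tilde S^{\rm pre})^{-1}(0)\cong\ov{\mc M}{}_{0,2,{\bf z}}^{\rm graph}(\tilde J_\phi, \tilde A)$. For a fixed solution, unique continuation for $D^{\rm vert}_u$ lets me pick finitely many smooth sections of $\Omega^{0,1}_{C_\rho^*}\otimes u^*T^{\rm vert}\tilde M_\phi$, supported away from the nodes and marked points, whose classes span $\mathrm{coker}(D^{\rm vert}_u \oplus D_\rho)$; each is the restriction to the graph of $u$ of a smooth, compactly supported section of $\Omega^{0,1}_{\tilde C_d^*/\tilde B_d}\otimes T^{\rm vert}\tilde M_\phi$ over $\tilde C_d^*\times\tilde M_\phi$, and by the implicit function theorem surjectivity persists on a neighbourhood of $(\rho, u, 0)$ in the zero locus. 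Covering the compact zero locus by finitely many such neighbourhoods and letting $W_0$ be the (finite-dimensional) span of all the chosen forms produces a non-equivariant scheme with the required surjectivity; I would then set $W := \mathrm{span}\{g\cdot W_0 : g\in G_d\}$, still finite-dimensional since $G_d = U(d+1)$ is compact and $\iota$ continuous, which is $G_d$-invariant and retains surjectivity, while $G_d$-equivariance of $\iota$ holds by construction. With this $W$, $\mathcal{F}$ is transverse to the zero section along $\tilde V^{\rm pre}$, so the implicit function theorem — together with elliptic regularity, which identifies the $W^{1,p}$-solutions near the zero locus with the smooth ones — presents a neighbourhood of the zero locus in $\tilde V^{\rm pre}$ as a topological manifold of dimension equal to the Fredholm index above, i.e.\ $j(\tilde A) + \dim\tilde B_d + \dim W$; the manifold is only topological near the nodal strata because the charts there come from gluing, exactly as in \cite{AMS2}.

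The hard part will be the surjectivity construction of the previous paragraph: arranging a single \emph{finite-dimensional} $G_d$-representation $W$ and a $G_d$-equivariant $\iota$, landing in compactly supported forms on $\tilde C_d^*\times\tilde M_\phi$, whose restrictions to graphs simultaneously span all the cokernels over the compact zero locus. This combines the unique-continuation input needed to push cokernel representatives off the nodes and marked points, the verification that a form on the product restricts correctly to \emph{every} nearby graph (so that surjectivity is genuinely an open condition on the zero locus), and a double use of compactness — of $\ov{\mc M}{}_{0,2,{\bf z}}^{\rm graph}(\tilde J_\phi, \tilde A)$ to reduce to finitely many forms, and of $G_d$ to equivariantise while remaining finite-dimensional.
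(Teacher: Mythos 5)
Your route is, in substance, the one the paper relies on: the paper's entire proof is a citation of \cite[Lemma 4.2]{AMS2}, which asserts the existence of a finite-dimensional approximation scheme killing all cokernels, after which the Fredholm index count and the (topological) implicit function theorem/gluing analysis give the manifold structure and the dimension $j(\tilde A)+\dim\tilde B_d+\dim W$. Your index computation for $D^{\rm vert}_u$ on smooth and nodal genus-zero domains, the reduction of the lemma to surjectivity of the thickened linearization along the compact zero locus, and the deferral of the nodal-strata charts to the gluing construction of \cite{AMS2} are all consistent with that.

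There is, however, a genuine gap in your equivariantization step. You set $W:=\mathrm{span}\{g\cdot W_0: g\in G_d\}$ and claim this is finite-dimensional ``since $G_d=U(d+1)$ is compact and $\iota$ continuous.'' That is false in general: $G_d$ acts on $C^\infty_c\bigl(\tilde C_d^*\times\tilde M_\phi,\Omega^{0,1}_{\tilde C_d^*/\tilde B_d}\otimes T^{\rm vert}\tilde M_\phi\bigr)$ by pullback, and the linear span of the orbit of a single vector under a continuous action of a compact group is typically infinite-dimensional (already for $U(1)$ acting on functions on $S^1$, the orbit of a function with infinitely many nonzero Fourier modes spans an infinite-dimensional subspace). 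Compactness of $G_d$ and continuity of the action do not bound the dimension of $\mathrm{span}\,(G_d\cdot W_0)$. The standard repair, which is what makes the cited lemma work, is either to build $W$ from the outset as a finite-dimensional $G_d$-representation mapped equivariantly into the space of sections, or to invoke Peter--Weyl: the $G_d$-finite vectors are dense, and since surjectivity onto the cokernels over the compact zero locus is an open condition, you may replace each of your finitely many cokernel-spanning forms by a nearby $G_d$-finite one; the $G_d$-span of finitely many $G_d$-finite vectors is a finite-dimensional invariant subspace, which you then take (after complexifying, since the paper requires $W$ to be a \emph{complex} $G_d$-representation) as $W$. With that correction, and with the near-nodal charts supplied by the gluing analysis you already cite, your argument does establish the lemma.
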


\begin{proof}
\cite[Lemma 4.2]{AMS2} guarantees the existence of finite-dimensional approximation schemes which can eliminate all cokernels of the linearized operators.
\end{proof}

Next we consider the analogue of framings on the graph moduli space. Any smooth section $u: C_\rho \to \tilde M_\phi$ of homotopy class $\tilde A$ pulls back a 2-form $u^* \tilde \Omega_\phi \in \Omega^2(C_\rho)$ whose integral over each component is an integer. Then there exists a unique Hermitian holomorphic line bundle (up to unitary isomorphism) on $C_\rho$, denoted by $L_u \to C_\rho$ whose curvature form is $- 2\pi {\bf i} u^* \tilde \Omega_\phi$. The total degree of $L_u$ is $d = d(\tilde A)$. A {\bf framing} over $u$ is a basis
\beqn
F = (f_0, \ldots, f_d)
\eeqn
of the space $H^0(L_u)$. For each framing $F$, define a Hermitian matrix $H(F)$ with entries
\beqn
H_{ij}(F):= \int_{C_\rho} \langle f_i, f_j \rangle u^* \tilde \Omega_\phi.
\eeqn
Each framing (which must be base point free) also defines a holomorphic map 
\beqn
\rho_F: C_\rho \to \mb{CP}^1 \times \mb{CP}^d,
\eeqn
whose projection to the $\mb{CP}^1$-component contracts all components except for mapping the principal component isomorphically to $\mb{CP}^1$, and the projection to the $\mb{CP}^d$ component is given by $\rho_F(z) = [f_0(z), \ldots, f_d(z)]$. In particular, $\rho_F$ defines an element in $\ov{\mc M}{}_{0,2, {\bf z}}^{\rm graph}(\mb{CP}^d, d)$.

Let $\tilde V$ be the space of quadruples $(\rho, u, e, F)$ where $(\rho, u, e) \in \tilde V^{\rm pre}$ and $F$ is a framing over $u$. This is the base of the AMS chart. Notice that $\tilde V \to \tilde V^{\rm pre}$ has the structure of a $G_d$-equivariant principal $G_d^{\mb C} = GL(d+1)$-bundle where $GL(d+1)$ transform the framing linearly. Moreover
\beqn
{\rm dim} \tilde V = j(\tilde A) + {\rm dim} W + {\rm dim} \tilde B_{d} + 2 {\rm dim} G_d.
\eeqn

The obstruction bundle of the AMS chart is the direct sum
\beqn
\tilde E = W \oplus \pi_{\tilde V/\tilde B}^* T\tilde B_d \oplus  {\mf g}_d
\eeqn
where $\pi_{\tilde V/\tilde B}: \tilde V \to \tilde B_{d}$ is the forgetful map $(\rho, u, e, F) \mapsto \rho$, while $W$ and ${\mf g}_d$ denote the trivial bundle with fibers $W$ and the Lie algebra ${\mf g}_d$ respectively. One can see
\beqn
{\rm dim} \tilde V - {\rm rank} \tilde E - {\rm dim} G_d = j(\tilde A) = {\rm dim}^{\rm vir} \ov{\mc M}{}_{0,2, {\bf z}}^{\rm graph} (\tilde J_{\phi}, \tilde A).
\eeqn
To define the Kuranishi section $\tilde S: \tilde V \to \tilde E$, we choose a $G_d$-invariant Riemannian metric on $\tilde B_{d}$ so that the exponential map $\exp_{\tilde B_d}$ identifies an open neighborhood $\Delta^+(\tilde B_{d})$ of the diagonal $\Delta (\tilde B_{d}) \subset \tilde B_{ d} \times \tilde B_{ d}$ with a neighborhood of the zero section of the tangent bundle $T\tilde B_{d}$. By abuse of notations, shrink $\tilde V$ to the open subset of quadruples $(\rho, u, e, F)$ such that $(\rho, \rho_F)\in \Delta^+(\tilde B_d)$ and such that the Hermitian matrix $H_F$ is invertible. Denote this open subset still by $\tilde V$. Then define 
\beqn
\begin{aligned}
\tilde S: \tilde V &\to \tilde E \\
(\rho, u, e, F) &\mapsto \Big( e, \exp_{\tilde B_d}^{-1}(\rho, \rho_F), \exp_H^{-1} (H(F)) \Big).
\end{aligned}
\eeqn
Here $\exp_H$ is the exponential map of matrices; $\exp_H^{-1}(H(F)) = 0$ is equivalent to that $F$ is a unitary basis. The AMS global chart for $\ov{\mc M}{}_{0, 2}^{\rm graph}(\tilde M_\phi, \tilde A)$ is the tuple 
\beq\label{eqn:k-chart}
\tilde K = (G_d, \tilde V, \tilde E, \tilde S, \tilde \psi)
\eeq
where $\tilde \psi$ is the natural map
\beqn
\tilde \psi: (\tilde S)^{-1}(0) / G_d \to \ov{\mc M}{}_{0,2, {\bf z}}^{\rm graph} (\tilde J_{\phi}, \tilde A).
\eeqn

\begin{lemma}\label{lemma34}
The map $\tilde \psi$ is a homeomorphism.
\end{lemma}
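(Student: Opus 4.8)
The plan is to unwind the defining equations of the section $\tilde S$, write down an explicit inverse to $\tilde\psi$ on the level of sets, and then upgrade it to a homeomorphism by a compactness argument, following the template of \cite{AMS2}. First I would describe the zero locus. A point $(\rho,u,e,F)\in\tilde V$ lies in $(\tilde S)^{-1}(0)$ exactly when the three components of $\tilde S$ vanish: $e=0$, so that $\ov\partial_{\tilde J_\phi}u=0$ and $u\colon C_\rho\to\tilde M_\phi$ is an honest $\tilde J_\phi$-holomorphic graph of class $\tilde A$ carrying the marked points $0,\infty,{\bf z}$; $\exp_{\tilde B_d}^{-1}(\rho,\rho_F)=0$, i.e.\ $\rho=\rho_F$; and $\exp_H^{-1}(H(F))=0$, i.e.\ $F=(f_0,\dots,f_d)$ is an orthonormal basis of $H^0(L_u)$ for the pairing $H_{ij}(F)=\int_{C_\rho}\langle f_i,f_j\rangle\,u^*\tilde\Omega_\phi$. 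Thus a point of $(\tilde S)^{-1}(0)$ amounts to a $\tilde J_\phi$-holomorphic graph $u$ together with an orthonormal framing $F$ of $H^0(L_u)$ and the bookkeeping datum $\rho=\rho_F\in\tilde B_d$; the group $G_d=U(d+1)$ acts freely here --- changing $F$ by a unitary transformation and $\rho$ accordingly --- and $\tilde\psi$ sends $[\rho,u,0,F]$ to $[C_\rho,u]$, forgetting $\rho$ and $F$.

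Next I would build the inverse. Starting from a stable $\tilde J_\phi$-holomorphic graph $[C,u]$ of class $\tilde A$ with the marked points $0,\infty,{\bf z}$, form the Hermitian holomorphic line bundle $L_u\to C$ of curvature $-2\pi{\bf i}\,u^*\tilde\Omega_\phi$. Because $L_u$ is sufficiently positive --- its degree on every bubble (a sphere in a fibre of $\tilde M_\phi$) is the positive integer $\langle\tilde\Omega_\phi,[\mathrm{bubble}]\rangle$, while on the principal component the $\mb{CP}^1$-factor of $\mb{CP}^1\times\mb{CP}^d$ supplies the needed rigidity --- one has $h^0(L_u)=d+1$, $h^1(L_u)=0$, $L_u$ is base-point free, and for every orthonormal basis $F$ of $H^0(L_u)$ the associated map $\rho_F\colon C\to\mb{CP}^1\times\mb{CP}^d$ is an embedding whose image is not contained in a hyperplane of $\mb{CP}^d$; this is where one invokes the positivity estimates of \cite[\S4]{AMS2}. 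Hence $\rho:=\rho_F$ is a point of $\tilde B_d$ and $(\rho,u,0,F)\in(\tilde S)^{-1}(0)$. Changing $F$ within $U(d+1)$ moves this point inside its $G_d$-orbit; replacing $[C,u]$ by an isomorphic representative induces a unitary identification of the corresponding line bundles (they have equal curvature) and so again only changes the $G_d$-orbit. This produces a well-defined two-sided inverse of $\tilde\psi$ as a map of sets, so $\tilde\psi$ is a bijection.

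Finally I would address the topology. The source carries the topology induced from the Hausdorff topology on graphs inside $C_\rho\times\tilde M_\phi$, while the target carries the Gromov topology. Continuity of $\tilde\psi$ is the easy direction: Hausdorff convergence of graphs, together with the uniform energy bound attached to the fixed class $\tilde A$, forces Gromov convergence of the underlying stable maps. Since a continuous bijection from a compact space to a Hausdorff space is automatically a homeomorphism, it then suffices to prove that $(\tilde S)^{-1}(0)$ is compact --- and this is \emph{the main obstacle}. Concretely, for a sequence $(\rho^{(n)},u^{(n)},0,F^{(n)})\in(\tilde S)^{-1}(0)$, Gromov compactness yields a subsequential limit $[C,u]$ of the graphs $u^{(n)}$, and one must produce a limiting framed point: the holomorphic structures on $L_{u^{(n)}}$ must converge to that on $L_u$, the $L^2$-orthonormal framings $F^{(n)}$ must subconverge to an orthonormal framing $F$ of $H^0(L_u)$, and --- the delicate point --- the induced embeddings $\rho_{F^{(n)}}=\rho^{(n)}$ must converge \emph{within} $\tilde B_d$, which requires that the limit bundle $L_u$ remain positive enough on every component of $C$ for $\rho_F$ to stay an embedding with non-degenerate image. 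These are exactly the continuity statements for families of line bundles and their sections along Gromov degenerations proved in \cite[\S4]{AMS2}, transplanted to the fibration $\tilde M_\phi\to S^2$; granting them, $(\tilde S)^{-1}(0)$ is compact and $\tilde\psi$ is the asserted homeomorphism.
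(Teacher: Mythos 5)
Your bijectivity argument is essentially the paper's: the zero locus consists of honest $\tilde J_\phi$-holomorphic graphs equipped with a unitary framing $F$ of $H^0(L_u)$ and $\rho=\rho_F$, surjectivity comes from choosing any $L^2$-unitary basis of $H^0(L_u)$, and injectivity/well-definedness comes from the fact that an isomorphism of stable graphs pulls back $u^*\tilde\Omega_\phi$, hence identifies the bundles $L_u$ and carries unitary framings to unitary framings, so that two lifts differ by an element of $G_d=U(d+1)$. Where you diverge is the topological step: the paper disposes of it in one line by asserting that the Gromov topology agrees with the $C^0$ (graph-Hausdorff) topology on the locus of genuine holomorphic maps, i.e.\ it checks continuity in both directions directly, while you instead prove compactness of $(\tilde S)^{-1}(0)$ (convergence of the line bundles $L_{u^{(n)}}$ and of the orthonormal framings along a Gromov limit, keeping $\rho_{F^{(n)}}$ inside $\tilde B_d$) and then invoke that a continuous bijection from a compact space to a Hausdorff space is a homeomorphism. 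That route is also valid and is in fact closer to how properness is established in \cite{AMS, AMS2}; it makes explicit an analytic input (properness of the chart) that the paper leaves implicit in its citation of \cite{AMS2}, at the cost of having to justify the convergence of bundles and frames, which you correctly identify and defer to \cite[\S 4]{AMS2}. Two small inaccuracies, neither of which affects the argument: the $G_d$-action on the zero locus is \emph{not} free in general --- the stabilizer of a framed curve is the (finite) automorphism group of the underlying stable graph, which is precisely why the quotient is an orbifold and FOP perturbations are needed --- and you do not need (nor is it clear in general) that $\rho_F$ is an \emph{embedding}; it suffices that $\rho_F$ is a stable map of the correct bidegree whose image is not contained in a hyperplane, which follows from the linear independence of the sections $f_0,\dots,f_d$.
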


\begin{proof}
This is the same as the case of \cite{AMS2}. We first prove that $\tilde \psi$ is a bijection. Given any point of $\ov{\mc M}{}_{0,2}^{\rm graph}(\tilde J_\phi, \tilde A)$ represented by a stable graph $u: C \to \tilde M_\phi$ where $C$ is a smooth or nodal genus zero curve with a parametrized principal component, consider the pullback bundle $L_u \to C$. Then as $\tilde J_\phi$ is tamed by $\tilde \Omega_\phi$, the pullback form $u^* \tilde \Omega_\phi$ is positive on effective components of $C$. Consider any unitary framing $F$ of $L_u$ with respect to the $L^2$ inner product induced from $u^* \Omega_\phi$. Then $F$ induces a stable map $\rho_F: C \to \mb{CP}^1 \times \mb{CP}^d$ whose projection to the second component is of degree $d = d(\tilde A)$, hence represents a point of $\tilde B_d$. One can then identify $u$ with a holomorphic graph $u_F: C_{\rho_F} \to \tilde M_\phi$. Therefore, 
\beqn
( \rho_F, u_F, 0, F) \in \tilde S^{-1}(0)
\eeqn
and is sent by $\tilde \psi$ to the point represented by the given $u$. Therefore $\tilde\psi$ is surjective. To show that $\tilde \psi$ is injective, suppose $\tilde\psi$ sends $(\rho_1, u_1, 0, F_1)$ and $(\rho_2, u_2, 0, F_2)$ to the same point of the graph moduli. Then the two stable graphs $u_1: C_{\rho_1} \to \tilde M_\phi$ and $u_2: C_{\rho_2} \to \tilde M_\phi$ are equivalent. Therefore, there exists a biholomorphic map $\varphi: C_{\rho_1} \to C_{\rho_2}$ which is the identity on the principal component such that 
\beqn
u_1 = u_2 \circ \varphi.
\eeqn
Therefore, $\varphi^* u_2^* \tilde \Omega_\phi = u_1^* \tilde \Omega_\phi$, implying that $L_{u_1} = \varphi^* L_{u_2}$ and $\varphi^* F_2$ is a unitary framing of $L_{u_1}$. Therefore, there is an element $g \in G_d$ such that $F_1 = g \varphi^* F_2$. This implies that $(\rho_1, u_1, 0, F_1)$ and $(\rho_2, u_2, 0, F_2)$ are on the same $G_d$-orbit. Hence $\tilde \psi$ is injective. Lastly it is straightforward to see that $\tilde\psi$ is continuous and a homeomorphism because the Gromov topology on stable maps agrees with the $C^0$-topology.
\end{proof}

We also notice that there is a well-defined $G_d$-invariant evaluation map 
\beqn
\ev = (\ev_0, \ev_\infty): \tilde V \to M_0 \times M_\infty, \ev(\rho, u, e, F) = (u(0), u(\infty))
\eeqn
which extends the evaluation map on the moduli space. 

\subsubsection{Stable smoothing}

Next we need to put a smooth structure on the AMS Kuranishi chart. As the smoothing procedure will be carefully engineered in the proof of the main theorem (c.f. the proof of Proposition \ref{prop_chart_detail}), we provide more details here, despite the fact that for the situation treated in this section it is almost identical to the original argument of Abouzaid--McLean--Smith \cite{AMS}.

First we recall a few notions related to smoothing. Let $G$ be a compact Lie group and $Y$ be a topological $G$-manifold. A $G$-smoothing on $Y$ is a smooth structure on $Y$ such that the $G$-action is smooth. A {\bf stable $G$-smoothing} on $Y$ is a $G$-smoothing on the product $Y \times R$ where $R$ is a finite-dimensional representation and $G$ acts on $Y \times R$ diagonally.

Any topological $G$-manifold $Y$ has its $G$-equivariant tangent microbundle $T_\mu Y$ (see \cite{Milnor_micro_1}). A necessary condition for $Y$ to admit a $G$-smoothing is the existence of a $G$-equivariant vector bunde lift of $T_\mu Y$, i.e., a $G$-equivariant microbundle isomorphism 
\beqn
F_\mu \to T_\mu Y
\eeqn
where $F \to Y$ is a $G$-equivariant vector bundle and $F_\mu$ is the induced microbundle. The crucial ingredient in the smoothing procedure in \cite{AMS} is the stable $G$-smoothing theorem of Lashof \cite{Lashof_1979} which says that once the tangent microbundle $Y$ admits a $G$-equivariant vector bundle lift, then $Y$ admits a stable $G$-smoothing. In addition, there is a canonical one-to-one correspondence between stable $G$-isotopy classes of $G$-equivariant vector bundle lifts of $T_\mu Y$ and stable $G$-isotopy classes of stable $G$-smoothings of $Y$.

The technical result regarding smoothing the AMS chart is stated below.

\begin{lemma}\label{lemma_smoothing}
There exists a stable $G_d$-smoothing on $\tilde V$, or equivalently, a $G_d$-smoothing on a product $\tilde V \times R$ where $R$ is a $G_d$-representation. 
\end{lemma}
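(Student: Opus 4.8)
The plan is to apply Lashof's stable $G$-smoothing theorem, so the entire task reduces to producing a $G_d$-equivariant vector bundle lift of the tangent microbundle $T_\mu \tilde V$. First I would observe that $\tilde V$ is built from several pieces that are already smooth: the moduli space $\tilde B_d \subset \ov{\mc M}{}_{0,2,{\bf z}}^{\rm graph}(\mb{CP}^d, d)$ is a smooth complex manifold with a smooth $G_d$-action; the passage $\tilde V \to \tilde V^{\rm pre}$ is a $G_d$-equivariant principal $GL(d+1)$-bundle, which contributes smoothly. So the only source of potential non-smoothness is $\tilde V^{\rm pre}$, the space of solutions $(\rho, u, e)$ to the perturbed Cauchy--Riemann equation. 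This is a topological manifold near $(\tilde S^{\rm pre})^{-1}(0)$ by the preceding Lemma, and the standard gluing/analysis identifies its tangent microbundle with the index bundle of the linearized operator together with the deformations of the domain; concretely, $T_\mu \tilde V^{\rm pre}$ is $G_d$-microbundle isomorphic to $\ker D \oplus \pi^* T\tilde B_d$ (after stabilizing by $W$ to kill cokernels), and $\ker D$ — being the index bundle of a family of real-linear Cauchy--Riemann type operators with a $G_d$-action — carries the structure of a genuine $G_d$-equivariant vector bundle. This is exactly the mechanism used by Abouzaid--McLean--Smith.

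The key steps, in order, are: (1) recall Lashof's theorem, as already quoted in the text, reducing the claim to the existence of a $G_d$-equivariant vector bundle lift of $T_\mu \tilde V$; (2) decompose $\tilde V$ compatibly with the $G_d$-action, isolating $\tilde V^{\rm pre}$ as the only non-manifestly-smooth factor and noting that $T_\mu \tilde V \cong \pi_{\tilde V/\tilde V^{\rm pre}}^* T_\mu \tilde V^{\rm pre} \oplus (\text{vertical bundle of the }GL(d+1)\text{-bundle})$ as $G_d$-microbundles, the vertical part being an honest equivariant vector bundle; (3) identify $T_\mu \tilde V^{\rm pre}$ near the zero locus, via the standard linearization/gluing analysis for $\tilde J_\phi$-holomorphic graphs, with the stabilized index bundle of the family of linearized operators plus the pulled-back tangent bundle of $\tilde B_d$; (4) observe this index bundle is $G_d$-equivariant because the entire setup — the domains $C_\rho$, the bundle $T^{\rm vert}\tilde M_\phi$, the operators $\ov\partial_{\tilde J_\phi} + \iota(e)$, and the finite-dimensional approximation space $W$ — is $G_d$-equivariant by construction; (5) assemble these into a global $G_d$-equivariant vector bundle lift of $T_\mu \tilde V$ and invoke Lashof to conclude. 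Since $\tilde V^{\rm pre}$ is only claimed to be a manifold near $(\tilde S^{\rm pre})^{-1}(0)$, and since FOP transversality and the ultimate pseudocycle only depend on behavior near the zero locus, it suffices to shrink $\tilde V$ to a $G_d$-invariant neighborhood of $\tilde S^{-1}(0)$ before running this argument.

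The main obstacle I expect is step (3)–(4): carefully verifying that the tangent microbundle of the topological manifold $\tilde V^{\rm pre}$ admits a $G_d$-equivariant vector bundle lift, i.e., that the linearized-operator/index-bundle description is genuinely equivariant and that the microbundle identification can be upgraded to a vector bundle lift. The subtlety is that $\tilde V^{\rm pre}$ is a priori only topological — one cannot naively differentiate — so one must argue at the level of microbundles, using the local charts coming from the implicit function theorem applied to the (Banach-space) perturbed equation together with the gluing maps near nodal configurations, and check that the transition data is $G_d$-equivariant and compatible with the equivariant index bundle. This is precisely the content of the Abouzaid--McLean--Smith smoothing argument, and the proof will proceed by reducing to their Lemma rather than reproving it; I would flag that the only new point compared to \cite{AMS} is the presence of the extra fixed marked points ${\bf z}$ and the fibration structure over $S^2$, neither of which affects the equivariance or the microbundle analysis since $G_d$ acts trivially on $S^2$ and on the marked points.

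\begin{proof}
By Lashof's stable $G$-smoothing theorem \cite{Lashof_1979} (quoted above), it suffices to produce a $G_d$-equivariant vector bundle lift of the tangent microbundle $T_\mu \tilde V$. Shrinking $\tilde V$ to a $G_d$-invariant open neighborhood of $\tilde S^{-1}(0)$ if necessary, we may assume $\tilde V$ is a topological $G_d$-manifold. The forgetful map $\tilde V \to \tilde V^{\rm pre}$ is a $G_d$-equivariant principal $GL(d+1)$-bundle, so $T_\mu \tilde V$ is $G_d$-microbundle isomorphic to the pullback of $T_\mu \tilde V^{\rm pre}$ direct sum the vertical tangent bundle of this $GL(d+1)$-bundle, and the latter is an honest $G_d$-equivariant vector bundle. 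Near $(\tilde S^{\rm pre})^{-1}(0)$, the standard linearization and gluing analysis for $\tilde J_\phi$-holomorphic graphs (identical to \cite[\S4]{AMS2}) identifies $T_\mu \tilde V^{\rm pre}$ with the $G_d$-equivariant vector bundle given by the index bundle of the family of linearized operators $D_u\bigl(\ov\partial_{\tilde J_\phi} + \iota(e)\bigr)$ — stabilized by the trivial bundle with fiber $W$, which by the preceding Lemma eliminates all cokernels — direct sum $\pi_{\tilde V/\tilde B}^* T\tilde B_d$. All data entering this index bundle (the universal curve $\tilde C_d \to \tilde B_d$, the vertical tangent bundle $T^{\rm vert}\tilde M_\phi$, the perturbation space $W$ with its $G_d$-action, and the operators themselves) are $G_d$-equivariant by construction, so the index bundle is $G_d$-equivariant. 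Assembling, we obtain a $G_d$-equivariant vector bundle lift of $T_\mu \tilde V$ over a $G_d$-invariant neighborhood of $\tilde S^{-1}(0)$, and Lashof's theorem yields a stable $G_d$-smoothing of $\tilde V$.
\end{proof}
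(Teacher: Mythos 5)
Your proposal is correct and follows essentially the same route as the paper: both reduce, via Lashof's stable $G$-smoothing theorem, to producing a $G_d$-equivariant vector bundle lift of $T_\mu \tilde V$, and both obtain that lift from the fibered structure over $\tilde B_d$ (fiberwise smooth solution spaces with no cokernel after stabilizing by $W$, varying regularly enough to give a vertical bundle plus $\pi_{\tilde V/\tilde B}^* T\tilde B_d$), deferring the technical microbundle-lift step to the Abouzaid--McLean--Smith argument. The only difference is packaging: you split off the framing ($GL(d+1)$) direction and describe the vertical part as an index bundle over $\tilde V^{\rm pre}$, whereas the paper works directly with the topological submersion $\tilde V \to \tilde B_d$ and its vertical tangent bundle, which is the same content.
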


One can use the trivial bundle $\uds R \to \tilde V$ to stabilize the Kuranishi chart so the base of the new chart is smooth. On the other hand, equivariant continuous vector bundles are always isomorphic to equivariant smooth vector bundles. Hence the Kuranishi chart stabilized by $\uds R$ is smooth.

\begin{proof}[Proof of Lemma \ref{lemma_smoothing}]
The proof is essentially the same as the argument of \cite[Section 4]{AMS}. The space $\tilde V$ is a topological $G_d$-manifold. The projection
\beq\label{eqn:rel-smooth}
\pi_{\tilde V/\tilde B}: \tilde V \to \tilde B_d
\eeq
is a topological submersion (see \cite[Definition 4.18]{AMS}) whose fibers are canonically smooth manifolds. Moreover, the smooth structures on fibers vary sufficiently regularly (known as $C^1_{loc}$ $G$-bundles, see \cite[Section 4.5]{AMS} and \cite[Corollary 6.29]{AMS}) so that there is a vertical tangent bundle 
\beqn
T^{\rm vert} \tilde V \to \tilde V
\eeqn
whose restrictions to fibers of $\pi_{\tilde V / \tilde B}$ are the tangent bundles of the fibers. Moreover one can construct a $G_d$-equivariant vector bundle lift
\beq\label{eqn:vec-lift}
(T^{\rm vert} \tilde V \oplus \pi_{\tilde V/ \tilde B}^* T\tilde B_d)_\mu \to T_\mu \tilde V
\eeq
(see \cite[Corollary 4.26]{AMS}). Hence by the theorem of Lashof \cite{Lashof_1979}, there is a representation $R$ and a $G_d$-smoothing on $\tilde V \times R$.
\end{proof}

\begin{rem}\label{rem:smoothing}
We also remark that although the $G_d$-equivariant vector bundle lift is manually constructed, the stable $G_d$-isotopy class of the $G_d$-smoothing is canonical. In other words, up to a stabilization by a $G_d$-representation, any two stable smoothings of $\tilde{V}$ built from the vector bundle lift \eqref{eqn:vec-lift} are equivariantly isotopic.
\end{rem}

\begin{proof}[Proof of Proposition \ref{prop:d-chart}]
By Lemma \ref{lemma34} and Lemma \ref{lemma_smoothing}, after a stabilization and taking the quotient by $G_d$, the global Kuranishi chart \eqref{eqn:k-chart} gives rise to a derived orbifold chart of the moduli space $\ov{\mc M}{}_{0, 2, {\bf z}}^{\rm graph}(\tilde M_\phi, \tilde A)$. As explained in \cite[Corollary 4.31]{AMS2}, we see that this derived orbifold chart is stably complex. Finally, using \cite[Lemma 4.5]{AMS}, possibly after a further stabilization, we can make sure that the extension of the evaluation map to the ambient orbifold of the derived orbifold chart is a smooth submersion.
\end{proof}

% In addition, the construction is flexible enough to allow a ``local-to-global'' induction. In fact the first and the third author proved a relative version of Lashof's theorem in \cite[Theorem B.3]{Bai_Xu_Arnold}.

\subsubsection{Invariance}\label{subsubsec:independence}
We briefly comment on the independence of the chart $({\mc U}, {\mc E}, {\mc S}, \tilde{\ev})$ from Proposition \ref{prop:d-chart} on the choices made in the construction.

The tuple $\tilde{K}$ from \eqref{eqn:k-chart} defines a global Kuranishi chart of $\ov{\mc M}{}_{0,2, {\bf z}}^{\rm graph}(\tilde M_\phi, \tilde A)$. It depends on the following auxiliary data:
\begin{enumerate}
    \item the integral $2$-form $\tilde{\Omega}_\phi$;
    \item the finite-dimensional approximation scheme $(W, \iota)$;
    \item the $G_d$-invariant Riemannian metric on $\tilde{B}_d$ in the definition of $\tilde{S}$.
\end{enumerate}
To construct the stable smoothing, one needs to choose:
\begin{enumerate}
    \item a vector bundle lift \eqref{eqn:vec-lift};
    \item a $G_d$-representation $R$ such that $\tilde{V} \times R$ admits a $G_d$-equivariant smooth structure;
    \item an identification of the stabilized vector bundle $E$ with a smooth $G_d$-equivariant vector bundle.
\end{enumerate}

By a now-standard doubly framed curve argument (cf. \cite[Section 6.10]{AMS} and \cite[Section 4.9]{AMS2}) and the independence properties of the $G_d$-equivariant smoothing discussed in Remark \ref{rem:smoothing}, we see that for different choices, the resulting derived orbifold charts $({\mc U}, {\mc E}, {\mc S})$ are equivalent. Furthermore, using \cite[Section 6.11]{AMS}, we conclude that the derived orbifold bordism class over $M \times M$ represented by $({\mc U}, {\mc E}, {\mc S}, \tilde{\ev})$ is independent of $\tilde{J}_\phi$ and all the data above.

\subsection{The Seidel map via FOP transverse perturbations}

Now we can use the FOP perturbation scheme to construct the Seidel map. For the graph moduli $\ov{\mc M}{}_{0, 2, {\bf z}}^{\rm graph}(\tilde J_\phi, \tilde A)$, let 
\beqn
\tilde {\mc C} = (\tilde {\mc U}, \tilde {\mc E}, \tilde {\mc S}, \tilde \psi)
\eeqn
be the derived orbifold chart obtained in Proposition \ref{prop:d-chart}. By choosing an FOP transverse perturbation of $\tilde {\mc C}$, via the evaluation map, one obtains a well-defined homology class in the product $M_0 \times M_\infty$ (with coefficient in the field ${\mb k}$). By pairing with cohomology classes one obtains the graph GW invariant
\beqn
\GGW{}_{0,2, {\bf z}}^{\tilde M_\phi, \tilde A} (\alpha_0, \alpha_\infty) \in {\mb k},\ \alpha_0, \alpha_\infty \in H^*(M; {\mb k} ).
\eeqn

We write the Seidel map as a generating series in the Novikov variable. Let the Novikov field be 
\beqn
\Lambda = \Big\{ \sum_{i=1}^\infty a_i q^{\lambda_i}\ |\ a_i \in {\mb k},\ \lim_{i \to \infty} \lambda_i = +\infty \Big\}.
\eeqn
Let $\tilde \Omega_\phi \in \Omega^2(\tilde M_\phi)$ be the coupling form associated to the Hamiltonian fibration $\tilde M_\phi$, whose cohomology class does not depend on a Hamiltonian connection. Then define a linear map 
\beqn
S_{\bf z} (\phi): H^*(M; \Lambda) \to H^*(M; \Lambda)
\eeqn
by 
\beqn
S_{\bf z} (\phi)(\alpha) = \sum_{\tilde A \in \pi_2^{\rm graph}(\tilde M_\phi)} q^{\tilde \Omega_\phi (\tilde A)} \sum_{\beta, \gamma} \GGW_{0,2, {\bf z}}^{\tilde M_\phi, \tilde A}(\alpha, e_\beta) g^{\beta \gamma} e_\gamma
\eeqn
where $\{e_\beta\}$ is a basis of $H^*(M; {\mb k})$ and $g^{\beta\gamma}$ is the inverse of the intersection matrix $g_{\beta\gamma} = \langle e_\beta, e_\gamma\rangle$.

We expect that the collection of linear transformations $S(\phi)$ (the version with $\mathbf{z}$ being empty) defines a representation of $\pi_1({\rm Ham}(M, \omega_M))$ into the invertibles of quantum cohomology defined by the FOP perturbaion scheme. However the proof of this statement (which is essentially equivalent to the associativity of quantum cup product) is beyond the current understanding of the FOP perturbation method. For the purpose of this paper we only prove the invertibility of each $S_{\bf z} (\phi)$ without fully verifying the homomorphism property.

\begin{thm}\label{thm_Seidel_representation}
The map $S_{\bf z} (\phi)$ only depends on the homotopy class of $\phi$ and the number of fixed marked points. Therefore, we can denote it by $S_k(\phi)$.
\end{thm}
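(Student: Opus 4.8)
The plan is to show that the derived orbifold bordism class over $M_0 \times M_\infty$ carried by $({\mc U}, {\mc E}, {\mc S}, \tilde{\ev})$ — and hence the homology class in $M \times M$ obtained after applying the FOP transform \eqref{eqn:FOP-transform}, and hence the pairings $\GGW_{0,2,{\bf z}}^{\tilde M_\phi, \tilde A}(\alpha_0, \alpha_\infty)$ — depends only on the homotopy class of the loop $\phi$ and on $k = \#{\bf z}$. Since $S_{\bf z}(\phi)$ is assembled from these invariants (summed against the coupling form, whose cohomology class is manifestly homotopy-invariant), this suffices. The argument splits into two independence statements: (i) independence of the internal choices made in constructing the chart for a fixed $\phi$ and fixed ${\bf z}$, and (ii) independence of the geometric data, namely the choice of Hamiltonian fibration $\tilde M_\phi$ representing a fixed homotopy class of $\phi$, the choice of fiberwise-tamed almost complex structure $\tilde J_\phi$, and the location of the points ${\bf z} \subset S^2 \setminus \{0,\infty\}$.

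For (i), the independence from $\tilde\Omega_\phi$, the approximation scheme $(W,\iota)$, the invariant metric on $\tilde B_d$, the vector bundle lift \eqref{eqn:vec-lift}, the stabilizing representation $R$, and the smooth model for $E$ was already recorded in Subsection \ref{subsubsec:independence}: the doubly-framed-curve trick of \cite[Section 6.10]{AMS}, \cite[Section 4.9]{AMS2} together with Remark \ref{rem:smoothing} shows the resulting D-charts are equivalent, and \cite[Section 6.11]{AMS} upgrades this to equality of derived orbifold bordism classes. The FOP Euler class depends only on the equivalence/cobordism class of the D-chart (by the discussion following Theorem \ref{thm_FOP_property}), so the resulting homology class in $M_0 \times M_\infty$, and therefore each $\GGW_{0,2,{\bf z}}^{\tilde M_\phi,\tilde A}$, is unchanged. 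This step is essentially bookkeeping.

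For (ii), I would construct explicit cobordisms. Given two fiberwise-tamed $\tilde J_\phi$'s, or two choices of $\tilde M_\phi$, or two configurations of marked points ${\bf z}^0, {\bf z}^1$, connect them by a path: for the almost complex structure, the space of fiberwise-tamed $\tilde J$ compatible with $\pi_\phi$ is contractible; for two fibrations built from homotopic loops $\phi^0 \simeq \phi^1$, the homotopy produces a Hamiltonian fibration over $S^2 \times [0,1]$ interpolating between them; for the marked points, any two $k$-tuples in $S^2\setminus\{0,\infty\}$ are connected by a path of distinct-point configurations (this space is connected, as $S^2 \setminus \{0,\infty\}$ is, and it is enough to move one point at a time avoiding the others). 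Over the parametrized family one runs the same AMS/global Kuranishi construction fiberwise to get a D-chart with boundary $\tilde{\mc C}^{\,1} \sqcup \tilde{\mc C}^{\,0}$ over $M_0 \times M_\infty$; the CUDV property (Theorem \ref{thm_FOP_property}(4)) lets one extend FOP transverse perturbations chosen at the two ends over the interior, producing a pseudocycle cobordism and hence equality of homology classes. The point-moving case needs one extra observation: as a marked point $z_i$ is moved but kept away from $0,\infty$ and the other points, the graph moduli spaces $\ov{\mc M}{}_{0,2,{\bf z}}^{\rm graph}(\tilde J_\phi,\tilde A)$ form a topologically locally trivial family (the marked points never collide, so no stratum-changing degeneration occurs), which is precisely what makes the parametrized chart a genuine cobordism rather than something with corners or jumps.

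The main obstacle is the point-moving invariance in the compactified moduli space: a priori one worries that as $z_i$ varies, a nodal configuration in which $z_i$ lies on a bubble component could interact with the principal component in a way that changes the chart's topology or its smooth structure. The resolution is that the compactification $\ov{\mc M}{}_{0,2,{\bf z}}^{\rm graph}$ is built so that bubbles are holomorphic spheres \emph{in the fibers} and the principal component is always the fixed parametrized $S^2$ carrying $0, \infty$ and the (moving but non-colliding) ${\bf z}$; the combinatorial type of the domain never degenerates under such a motion, so the universal curve $\tilde C_d \to \tilde B_d$ and the whole AMS construction deform continuously, and the resulting family of global Kuranishi charts over the path is $G_d$-equivariantly locally trivial in the topological category — after which Lashof's stable smoothing theorem \cite{Lashof_1979}, applied to the total space of the family exactly as in Lemma \ref{lemma_smoothing}, gives the cobordism of smooth D-charts. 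I would make sure to carry out this deformation argument carefully since it is the one genuinely new point compared to the $\tilde J$- and $\phi$-invariance, which are routine adaptations of \cite{AMS, AMS2}.
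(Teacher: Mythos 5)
Your proposal follows the same route as the paper: it invokes the independence discussion of Subsection \ref{subsubsec:independence} for the internal choices, and then builds parametrized-family cobordisms of the derived orbifold charts for a path of marked-point configurations ${\bf z}$ and for a homotopy of $\phi$ (extended to a homotopy of the geometric data), concluding via cobordism invariance of the FOP pseudocycles. The extra care you take with the point-moving case and the CUDV/Lashof details is a more explicit version of what the paper dismisses as ``standard methods,'' so the argument is correct and essentially identical in approach.
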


\begin{proof}
Except for the discussion in Section \ref{subsubsec:independence}, we first need to remove the dependence on ${\bf z}$. Indeed, any two distinct tuples ${\bf z}$ and ${\bf z}'$ can be connected by a path. One can use standard methods construct a cobordism between the corresponding derived orbifold charts. On the other hand, a homotopy of $\phi$ can be extended to a homotopy of geometric data (almost complex structures etc.), and in turn, a cobordism of the derived orbifold charts of the moduli spaces. As the FOP pseudocycles are cobordism invariants, the induced map $S_{\bf z} (\phi)$ on cohomology is invariant under homotopies of $\phi$.
\end{proof}

\begin{thm}\label{thm_invertible}
The map $S_k (\phi)$ is invertible for all $\phi: S^1 \to {\rm Ham}(M, \omega_M)$.
\end{thm}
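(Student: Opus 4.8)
The plan is to run the classical ``lowest-energy term of a Seidel element'' argument inside the FOP framework. Since $S_k(\phi)$ is $\Lambda$-linear and $\Lambda$ is complete with respect to the Novikov valuation $\nu\big(\sum a_i q^{\lambda_i}\big) = \min\{\lambda_i : a_i \neq 0\}$, it will be enough to produce a real number $E_0$ and a ${\mb k}$-linear automorphism $T_0$ of $H^*(M; {\mb k})$ with
\beqn
S_k(\phi) = q^{E_0}\, T_0 + (\text{a }\Lambda\text{-linear operator of valuation} > E_0);
\eeqn
for then $q^{-E_0} T_0^{-1} S_k(\phi) = \mathrm{id} + N$ with $\nu(N) > 0$, and $\sum_{j \ge 0}(-N)^j$ converges and inverts it. By Theorem \ref{thm_Seidel_representation} and the discussion of Section \ref{subsubsec:independence}, I may compute this leading term using any convenient configuration ${\bf z}$ and any admissible fibered almost complex structure $\tilde J_\phi$.

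First I would isolate the minimal section class. By Gromov compactness the coupling areas $\tilde\Omega_\phi(\tilde A)$ of section classes $\tilde A$ with $\ov{\mc M}{}_{0,2,{\bf z}}^{\rm graph}(\tilde J_\phi, \tilde A)$ nonempty are bounded below, and the minimum $E_0$ is attained. A stable graph of coupling area $E_0$ carries no sphere bubble --- a bubble is an effective fiber class of positive $\omega_M$-area, so its presence would push the principal component, which is itself an honest $\tilde J_\phi$-holomorphic graph carrying $z_0, z_\infty$, into a section class of strictly smaller coupling area that still supports a graph --- so $\ov{\mc M}{}_{0,2,{\bf z}}^{\rm graph}(\tilde J_\phi, \tilde A_{\min})$ consists of automorphism-free embedded sections, and is unaffected by forgetting the auxiliary points ${\bf z}$. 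Such a section's principal component is an embedding, hence somewhere injective, so for a generic admissible $\tilde J_\phi$ this moduli space is a closed manifold of the expected dimension $j(\tilde A_{\min})$, transversally cut out; by the CUDV property (Theorem \ref{thm_FOP_property}(4)) I would choose the FOP perturbation defining $\GGW$ to vanish near it. Then the $\tilde A_{\min}$-summand of $S_k(\phi)$ equals $q^{E_0}$ times the operator $T_0$ given by the classical pairings
\beqn
T_0(\alpha) = \sum_{\beta,\gamma}\big\langle \ev_*[\ov{\mc M}{}_{0,2,{\bf z}}^{\rm graph}(\tilde J_\phi, \tilde A_{\min})],\, \alpha \times e_\beta\big\rangle\, g^{\beta\gamma} e_\gamma,
\eeqn
while every other section class contributes in valuation strictly larger than $E_0$.

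Finally I would identify $T_0$ as an isomorphism. Using the local triviality of $\tilde M_\phi$ over disks around $z_0 = 0$ and $z_\infty = \infty$ and choosing a representative of $[\phi]$ adapted to it, one shows --- exactly as in the classical computation of the leading term of a Seidel element, cf.\ \cite{seidel-rep, mcduffseidel, McDuff_Salamon_2004} --- that $c_1(T^{\rm vert}\tilde M_\phi)(\tilde A_{\min}) = 0$ and that $\ev_0 \colon \ov{\mc M}{}_{0,2,{\bf z}}^{\rm graph}(\tilde J_\phi, \tilde A_{\min}) \to M_0$ is a diffeomorphism; then $\ev_*[\ov{\mc M}{}_{0,2,{\bf z}}^{\rm graph}(\tilde J_\phi, \tilde A_{\min})]$ is Poincar\'e dual to the graph of a diffeomorphism of $M$, and $T_0$ is the induced (invertible) change of variables on $H^*(M;{\mb k})$. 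Together with the first paragraph this gives $S_k(\phi) \in \mathrm{GL}\big(H^*(M;\Lambda)\big)$. The hard part will be this last identification of the leading term: one must produce an admissible \emph{and} generic $\tilde J_\phi$ for which the minimal section class is simultaneously transversally cut out (so that FOP perturbations can be switched off there and the count stays classical) and enumeratively as simple as in the integrable model --- in particular, checking that the constrained class of ``admissible'' almost complex structures is still generic enough for the relevant somewhere-injective embedded sections, and ruling out or absorbing possible ties in coupling area among competing minimal classes.
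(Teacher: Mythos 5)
Your overall reduction (find $E_0$ and an invertible ${\mb k}$-linear $T_0$ with $S_k(\phi)=q^{E_0}T_0+{}$higher valuation, then invert by a geometric series) is sound \emph{whenever such a $T_0$ exists}, and your energy argument excluding bubbles from the minimal section class, as well as generic regularity for sections (which are automatically somewhere injective), are fine. The fatal gap is the last step: the claim that for the minimal class $\tilde A_{\min}$ one has $c_1(T^{\rm vert}\tilde M_\phi)(\tilde A_{\min})=0$ and that $\ev_0$ is a diffeomorphism is not a classical fact and is false in general. Take $M=S^2$ and $\phi$ the generator of $\pi_1({\rm Ham}(S^2))$, so that $\tilde M_\phi$ is the nontrivial $S^2$-bundle over $S^2$, i.e.\ the Hirzebruch surface $\mathbb{F}_1$. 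The section classes are $E+kF$ with $E$ the exceptional $(-1)$-section and $F$ the fiber class; the minimal coupling area is attained by $E$, whose section moduli space is a single regular, rigid section, with $c_1^{\rm vert}(E)=-1$ and expected dimension $0\neq\dim M$. The resulting contribution to $S_k(\phi)$ is a rank-one operator (it pairs nontrivially only against the point class), so there is no expansion $S_k(\phi)=q^{E_0}(T_0+N)$ with $T_0$ invertible and $\nu(N)>0$: the lowest-valuation part of the matrix of $S_k(\phi)$ is genuinely singular, even though $S_k(\phi)$ itself is invertible (different matrix entries have their leading terms at different valuations). So the "leading term is induced by a diffeomorphism" picture only holds for the trivial loop; the classical proofs you cite establish invertibility not by identifying the leading term but via the gluing identity $S(\phi)\circ S(\phi^{-1})=S(1)={\rm id}$, and computations of leading terms (e.g.\ for circle actions) do not produce invertible leading coefficients in general.

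That gluing/homomorphism identity is exactly what the paper says it cannot currently prove with single-valued FOP perturbations on graph moduli (handling nodal degenerations is the obstruction), which is why its proof takes a different route: it first shows $S_k(1)={\rm id}+{}$higher order (constant sections are transverse, so the FOP count is classical there), then passes to Morse and Hamiltonian Floer flow categories, defines PSS/SSP-type Seidel maps $S_k^{\rm PSS}(\phi)$, $S_k^{\rm SSP}(\phi)$, proves the bimodule composition law $S_k^{\rm SSP}(\psi)\circ S_l^{\rm PSS}(\phi)=S_{k+l}^{\rm Morse}(\phi\#\psi)$ (compositions of flow bimodules are tractable for FOP perturbations, unlike nodal graph degenerations), and combines this with the rank equality ${\rm dim}_\Lambda HF_*(M;\Lambda)={\rm rank}\,H_*(M;{\mb k})$ from the forthcoming Bai--Xu work to conclude that the PSS/SSP maps, hence $S_k^{\rm Morse}(\phi)$ and finally $S_k(\phi)$, are invertible. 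If you want to repair your argument you would essentially have to reprove some version of that composition law; the valuation-leading-term shortcut cannot work as stated.
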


\begin{proof}
See Subsection \ref{subsection_invertibility}.
\end{proof}

Accordingly, we immediately have

\begin{cor}\label{cor_nondegenerate}
For each $\phi: S^1 \to {\rm Ham}(M, \omega_M)$, the bilinear pairing
\beqn
(\beta_0, \beta_\infty) \mapsto \langle \beta_0, \beta_\infty\rangle_{k, \phi}:= \sum_{\tilde A \in \pi_2^{\rm graph}(\tilde M_\phi)} q^{\tilde \Omega_\phi(\tilde A)} \GGW_{0,2, {\bf z}}^{\tilde M_\phi, \tilde A}(\beta_0, \beta_\infty) \in \Lambda
\eeqn
is nondegenerate. \qed
\end{cor}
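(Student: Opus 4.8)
The plan is to deduce Corollary \ref{cor_nondegenerate} directly from Theorem \ref{thm_invertible}, which asserts that the $\Lambda$-linear map $S_k(\phi)$ on $H^*(M;\Lambda)$ is invertible. The bilinear pairing $\langle \beta_0, \beta_\infty\rangle_{k,\phi}$ is, up to reindexing by the Novikov variable, exactly the matrix of $S_k(\phi)$ expressed in the basis $\{e_\beta\}$ of $H^*(M;{\mb k})$ composed with the Poincar\'e pairing $g_{\beta\gamma} = \langle e_\beta, e_\gamma\rangle$. More precisely, unwinding the definition of $S_{\bf z}(\phi)$, one sees that $\langle S_k(\phi)(\alpha), \beta\rangle_{\rm PD} = \sum_{\tilde A} q^{\tilde\Omega_\phi(\tilde A)} \GGW_{0,2,{\bf z}}^{\tilde M_\phi, \tilde A}(\alpha, \beta)$ after using that $g^{\beta\gamma}$ is inverse to $g_{\beta\gamma}$ and that the Poincar\'e pairing over ${\mb k}$ (hence over $\Lambda$) is itself nondegenerate. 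Thus the pairing $\langle\cdot,\cdot\rangle_{k,\phi}$ is the composition of the invertible operator $S_k(\phi)$ with the nondegenerate Poincar\'e pairing.

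First I would make the identification precise: fix the basis $\{e_\beta\}$ of $H^*(M;{\mb k})$, extend to a $\Lambda$-basis of $H^*(M;\Lambda)$, and write out the matrix of $S_k(\phi)$ in this basis. By the formula for $S_{\bf z}(\phi)$, the $(\gamma,\alpha)$-entry is $\sum_{\tilde A} q^{\tilde\Omega_\phi(\tilde A)} \sum_\beta \GGW_{0,2,{\bf z}}^{\tilde M_\phi,\tilde A}(\alpha, e_\beta) g^{\beta\gamma}$. Multiplying on the right by the Poincar\'e pairing matrix $(g_{\gamma\delta})$ — which is invertible over ${\mb k}$ and hence over $\Lambda$ — collapses the $g^{\beta\gamma}g_{\gamma\delta}$ contraction to a Kronecker delta, leaving precisely the matrix $\big(\sum_{\tilde A} q^{\tilde\Omega_\phi(\tilde A)}\GGW_{0,2,{\bf z}}^{\tilde M_\phi,\tilde A}(e_\alpha, e_\delta)\big)_{\alpha,\delta}$, which is the Gram matrix of $\langle\cdot,\cdot\rangle_{k,\phi}$. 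Since $S_k(\phi)$ is invertible over $\Lambda$ by Theorem \ref{thm_invertible} and $(g_{\gamma\delta})$ is invertible, the product is invertible, so the Gram matrix of the pairing has nonzero determinant over the field $\Lambda$; equivalently, the pairing is nondegenerate. The symmetry of the pairing (if needed) follows from the symmetry of $\GGW$ in its two insertions, which in turn comes from the symmetry of the two marked points $z_0, z_\infty$ in the graph moduli construction.

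The only genuinely delicate point is bookkeeping the Novikov gradings: $S_k(\phi)$ is $\Lambda$-linear but not ${\mb k}$-linear, and its invertibility as a $\Lambda$-linear endomorphism of the finite-rank free $\Lambda$-module $H^*(M;\Lambda)$ is exactly the statement that its matrix over $\Lambda$ has a unit determinant. One should check that the bilinear form $\langle\cdot,\cdot\rangle_{k,\phi}$ is $\Lambda$-bilinear (it is, since the sum over $\tilde A$ with weights $q^{\tilde\Omega_\phi(\tilde A)}$ converges in $\Lambda$ by the standard energy/Gromov-compactness bound, and scaling an input by $q^\lambda$ pulls out $q^\lambda$), so that nondegeneracy over $\Lambda$ is the correct notion and follows from invertibility of the associated matrix. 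This is routine but worth stating, since it is the step where one uses that $\Lambda$ is a field. With the matrix identity above in hand, no further input is required: the corollary is immediate, which is why it is marked \qed in the statement.
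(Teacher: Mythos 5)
Your proposal is correct and is essentially the paper's own argument: the paper derives Corollary \ref{cor_nondegenerate} immediately from Theorem \ref{thm_invertible}, since by the definition of $S_{\bf z}(\phi)$ the Gram matrix of $\langle\cdot,\cdot\rangle_{k,\phi}$ is the matrix of $S_k(\phi)$ composed with the nondegenerate intersection matrix $(g_{\beta\gamma})$. Your explicit contraction of $g^{\beta\gamma}$ against $g_{\gamma\delta}$ just spells out the bookkeeping the paper leaves implicit, so there is nothing to add.
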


\subsection{Proof of Theorem \ref{thm_invertible}}\label{subsection_invertibility}

We first prove that $S_k (1)$ is invertible, where $k\geq 0$ is any nonnegative integer and $1$ is the constant loop in ${\rm Ham}(M, \omega_M)$ at the identity Hamiltonian diffeomorphism. The corresponding fibration $\tilde M_1$ is the trivial product $M \times S^2$. The moduli space of constant pseudo-holomorphic graphs is cut out transversely, and its induced map on cohomology is the identity. So, the derived orbifold chart constructed in Proposition \ref{prop:d-chart} is a stabilization of a manifold, thus the FOP pseudocycle agrees with the pushforward of the fundamental class of the moduli space under the evaluation map. Hence $S_k (1)$ is the identity map plus a higher order term in terms of the valuation on the Nokikov field $\Lambda$. We do not know how to prove the vanishing of the higher order term which counts parametrized holomorphic spheres in $M$. Nonetheless, it follows that $S_k (1)$ is invertible.

\subsubsection{Floer and Morse flow categories}

The difficulty of proving the expected property of the Seidel map
\beqn
S_k (\phi) \circ S_l (\psi) = S_{k+l} (\phi \# \psi)
\eeqn
where $\phi \# \psi$ is the product loop in ${\rm Ham}(M, \omega_M)$ is that it is a nontrivial task to identify the contribution of nodal configurations using the FOP perturbation scheme. To bypass this difficulty, we use Floer and Morse models; the corresponding moduli spaces can be viewed as certain real blowups along the codimension two divisors defined by nodal curves via introducing real framings on the nodes.

Introduce the discrete monoid
\beqn
\Pi = \{ \omega_M(A)\ |\ A \in \pi_2(M)\}\subset {\mb R}.
\eeqn
Fix a nondegenerate 1-periodic family of Hamiltonian $H_t$ (reminder: this is a datum independent of the Hamiltonian loop $\phi_t$). The critical point of the action functional ${\mc A}_{H_t}$ are equivalence classes of capped 1-periodic orbits, where two cappings are regarded equivalent if their difference has vanishing symplectic area. Fix an $S^1$-family of $\omega_M$-compatible almost complex structures $J_t$. The Hamiltonian Floer flow category of the pair $(H_t, J_t)$, denoted by ${\mb F}^{\rm Floer}$, has objects being equivalence classes of capped 1-periodic orbits, while for any pair of objects $(p, q)$, the morphism space is the compactified moduli space $\ov{\mc M}{}_{pq}^{\rm Floer}$ of solutions to the Floer equation for $(H_t, J_t)$ connecting $p$ and $q$. There is a natural free $\Pi$-action on this flow category.

On the other hand, fix a Morse--Smale pair $(f, g)$ on $M$. The Morse flow category, denoted by $\mb{F}^{\rm Morse}$, has objects being pairs $(a, p)$ where $a \in \Pi$ and $p\in {\rm crit} f$, while for any pair $((a, p), (b, q))$, the morphism space is the moduli space $\ov{\mc M}{}_{pq}^{\rm Morse}$ of possibly broken unparametrized Morse trajectories connecting $p$ and $q$. One can choose the pair $(f, g)$ such that each morphism space is an oriented smooth compact manifold-with-corners such that the composition maps are smooth embeddings compatible with the orientations (see \cite{Wehrheim-Morse}).

The Morse flow category naturally induces a chain complex (over any coefficient ring) with differential defined by counting $0$-dimensional morphism spaces with signs. Let $\Lambda$ be the Novikov field with ground field ${\mb k}$. Then the chain complex derived from $\mb{F}^{\rm Morse}$ coincides with 
\beqn
CM_*(f, g) \otimes \Lambda.
\eeqn
The homology is equal to $H_*(M; \Lambda)$. For the purpose of this paper, we do not need to consider grading.

On the other hand, to define the Floer chain complex from the Floer flow category $\mb{F}^{\rm Floer}$, one must regularize the moduli spaces. In \cite{Bai_Xu_Arnold, Bai_Xu_Floer} the first and third author constructed a class of so-called Kuranishi lifts of the Floer flow category $\mb{F}^{\rm Floer}$ which admits stable complex structures\footnote{In fact there is one technical step called ``outer-collaring'' which modifies the original Floer flow category by extending the boundary and corner strata outward.}. By using the FOP perturbation scheme, one obtains a Floer complex (over any coefficient field ${\mb k}$), denoted by 
\beqn
CF_*(H_t, J_t; \Lambda).
\eeqn
The resulting homology, $HF_*(H_t, J_t)$, as a finite-dimensional $\Lambda$-vector space, is independent of $(H_t, J_t)$. A crucial fact we need in this paper, proved in the upcoming \cite{Bai_Xu_Floer}, is the following.

\begin{thm}\cite{Bai_Xu_Floer}\label{thm_equal_rank}
When ${\mb k}$ is a field, ${\rm dim}_\Lambda HF_*(M; \Lambda) = {\rm rank} H_*(M;\mathbb{k})$.
\end{thm}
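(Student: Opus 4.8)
The plan is to identify $HF_*(H_t, J_t)$ with the singular homology $H_*(M;\Lambda)$ by a Piunikhin--Salamon--Schwarz (PSS) type comparison adapted to the FOP perturbation scheme, supplemented by an elementary half of the estimate coming from an action filtration. By the invariance of $HF_*$ under deformation of $(H_t, J_t)$ stated above, it suffices to work with a convenient choice: $H_t = \epsilon f$ for a fixed Morse function $f$ (compatible with $(f,g)$), with $\epsilon>0$ small, and $J_t \equiv J$ a generic time-independent $\omega_M$-compatible almost complex structure. For such data the $1$-periodic orbits are precisely the (nondegenerate, constant) critical points of $f$, so $CF_*(\epsilon f, J;\Lambda)$ is free over $\Lambda$ on ${\rm crit}(f)$. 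Since every Floer trajectory, and hence every small FOP perturbation of the associated Kuranishi lift, decreases the action functional ${\mc A}_{\epsilon f}$, the complex is filtered by action; the associated graded differential counts only the zero-area ``vertical'' configurations, which for $\epsilon$ small (after the FOP perturbation, which near zero-area trivial cylinders reduces to a classical transverse count) is the Morse differential of $(f,g)$ tensored with $\Lambda$. The resulting spectral sequence thus has $E_1$-page $H_*(M;{\mb k})\otimes_{\mb k}\Lambda$ and converges to $HF_*$, giving
\beqn
{\rm dim}_\Lambda HF_*(M;\Lambda) \le {\rm rank}_{\mb k}\, H_*(M;{\mb k}).
\eeqn

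For the reverse inequality I would construct the PSS chain maps. The relevant moduli spaces --- half-cylinders $u\colon \mathbb{R}\times S^1 \to M$ asymptotic to a capped orbit at one end and capped off at the other by a point on the (un)stable manifold of a Morse critical point, the ``spiked disks'' --- admit global Kuranishi, resp.\ flow-category Kuranishi, lifts with stable complex structures, exactly as for the Floer and graph moduli treated above and in \cite{Bai_Xu_2022, Bai_Xu_Arnold}. Applying the FOP perturbation scheme together with the evaluation maps yields $\Lambda$-linear maps
\beqn
\Phi\colon CM_*(f,g)\otimes\Lambda \longrightarrow CF_*(\epsilon f, J;\Lambda), \qquad \Psi\colon CF_*(\epsilon f, J;\Lambda) \longrightarrow CM_*(f,g)\otimes\Lambda .
\eeqn
Analyzing the codimension-one boundary of the $1$-dimensional cut-down moduli spaces (a Floer cylinder, a Morse trajectory, or a sphere in $M$ breaking off) shows $\Phi,\Psi$ are chain maps, and the usual two-parameter ``neck-stretching / gluing of two PSS caps'' moduli produce chain homotopies $\Psi\circ\Phi\simeq{\rm id}$ and $\Phi\circ\Psi\simeq{\rm id}$, so $HF_*(\epsilon f,J)\cong H_*\big(CM_*(f,g)\otimes\Lambda\big)$. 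Finally $CM_*(f,g)$ is a finite complex of free ${\mb k}$-modules and $\Lambda$ is flat over ${\mb k}$, whence $H_*\big(CM_*(f,g)\otimes_{\mb k}\Lambda\big)=H_*(M;{\mb k})\otimes_{\mb k}\Lambda$; comparing $\Lambda$-dimensions gives the theorem.

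The main obstacle is that, just as the authors note for the Seidel homomorphism property, FOP perturbations are not known to behave functorially under gluing, so one cannot simply invoke a Floer-theoretic TQFT package. The delicate point is to choose mutually compatible FOP perturbations across the entire hierarchy of moduli spaces (Morse, Floer, spiked-disk, and the two-parameter homotopy moduli) so that perturbations on boundary faces --- which are fiber products of lower-dimensional moduli --- extend compatibly into the interior; this is where the locality, CUDV, and stabilization properties of FOP sections from Theorem \ref{thm_FOP_property}, combined with the outer-collaring device for flow categories, are used. One must also verify that each of these moduli spaces genuinely carries a global Kuranishi lift with stable complex structure, which is where the bulk of the technical work of \cite{Bai_Xu_Floer} should lie. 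A secondary, purely bookkeeping matter is to run the entire comparison at the chain level over ${\mb k}$ rather than over $\Lambda$, so that the final universal-coefficients step is immediate and the conclusion ${\rm dim}_\Lambda HF_* = {\rm rank}_{\mb k} H_*(M;{\mb k})$ follows directly.
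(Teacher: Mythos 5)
This theorem is not proved in the paper you were given: it is quoted verbatim from the forthcoming work \cite{Bai_Xu_Floer}, so there is no in-paper argument to measure your proposal against. Judged on its own terms, though, your proposal has a genuine gap at its crucial second step. You claim that the PSS/SSP maps $\Phi,\Psi$ can be shown to be two-sided homotopy inverses, with the homotopy $\Phi\circ\Psi\simeq\mathrm{id}$ on the Floer side produced by ``the usual two-parameter neck-stretching/gluing of two PSS caps.'' This is precisely the step that the present paper states is not available in the FOP framework: the authors record that the composition $\Psi^{\rm SSP}\circ\Psi^{\rm PSS}$ is known to be invertible (\cite[Theorem C]{Bai_Xu_2022}), ``but whether the other composition is invertible is currently unknown.'' The obstruction is not bookkeeping of compatible perturbations, as your closing paragraph suggests; it is that FOP (single-valued, normally complex) perturbations are not known to behave well under the gluing/degeneration that identifies the boundary of the two-parameter moduli with broken configurations carrying sphere bubbles and nodal curves. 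The same issue is why the paper only proves invertibility of the Seidel maps rather than the homomorphism property, and why even for the constant loop the higher-order term of $S_k(1)$ cannot be shown to vanish. So the sentence asserting both chain homotopies is exactly the unproved hard point, and your proof as written does not go through.

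Note, however, that your first half can be salvaged into a complete argument without ever constructing the problematic homotopy. Your action/Novikov filtration argument (for $H_t=\epsilon f$ small and generic $J$, with the low-energy FOP-perturbed differential agreeing with the Morse differential since those moduli are already transverse and FOP transversality reduces to classical transversality there) gives ${\rm dim}_\Lambda HF_*\leq {\rm rank}_{\mb k}H_*(M;{\mb k})$. For the reverse inequality you do not need $\Phi\circ\Psi\simeq\mathrm{id}$: the already established invertibility of $\Psi^{\rm SSP}\circ\Psi^{\rm PSS}$ on homology shows $\Psi^{\rm PSS}$ is injective, hence ${\rm rank}_{\mb k}H_*(M;{\mb k})={\rm dim}_\Lambda H_*(M;\Lambda)\leq {\rm dim}_\Lambda HF_*(M;\Lambda)$, and the two inequalities force equality. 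Whether \cite{Bai_Xu_Floer} argues this way I cannot say, but this repair replaces the step your proposal cannot justify with a result the authors have already proved; even so, the upper-bound half still requires a careful verification (analogous to the $S_k(1)$ discussion in the paper) that the FOP count in the zero-area graded piece is exactly the Morse count and that no isotropy-locus or bubbled configurations contribute there, which you should spell out rather than assert.
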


Before we define various alternate versions of the Seidel map, we need to recall the general notion of flow bimodules. If ${\mb F}$ and ${\mb G}$ are two flow categories, then a bimodule ${\mb M}$ over $({\mb F}, {\mb G})$ consists of, for each pair $p \in {\rm Ob}{\mb F}$ and $q \in {\rm Ob}{\mb G}$, a stratified topological space $M_{pq}^{\mb M}$ whose boundary and corner strata, roughly speaking, come from degenerations of morphism spaces of ${\mb F}$ and ${\mb G}$. For example, the union of codimension 1 strata of $M_{pq}^{\mb M}$ is the union of products
\beqn
\bigcup_{p' \in {\rm Ob}{\mb F}} M^{\mb F}_{pp'} \times M_{p' q}^{\mb M} \cup \bigcup_{q' \in {\rm Ob}{\mb G}} M^{\mb M}_{pq'} \times M_{q'q}^{\mb G}.
\eeqn
Examples of flow bimodules include moduli spaces of Floer continuation maps or Piunikhin--Salamon--Schwarz (PSS) maps. 

Flow bimodules can be composed (see the definition in \cite{Bai_Xu_Floer} and the discussion in the abstract setting \cite[Section 4]{Abouzaid_Blumberg_2024}). For example, the composition of two flow bimodules corresponding to two continuation maps is the bimodule whose moduli spaces are those of ``two-stage'' continuations (whose top strata has one breaking). There is also a notion of homotopy between bimodules. For example, the above-mentioned composed bimodule corresponding to concatenations of two continuation maps is homotopic to the bimodule corresponding to a single continuation map with data coming from ``gluing.''

As one expects, a flow bimodule should induce a chain map. In \cite{Bai_Xu_2022}, we defined a chain map induced from the PSS construction
\beqn
\Psi^{\rm PSS}: CM_*(f, g) \otimes \Lambda \to CF_*(H_t, J_t; \Lambda)
\eeqn
and a chain map in the opposite direction (which we call the SSP construction)
\beqn
\Psi^{\rm SSP}: CF_*(H_t, J_t; \Lambda) \to CM_*(f, g)\otimes \Lambda.
\eeqn
In addition, we proved that (cf. \cite[Theorem C]{Bai_Xu_2022}) the composition 
\beqn
\Psi^{\rm SSP}\circ \Psi^{\rm PSS}: CM_*(f, g) \otimes {\mb k} \to CM_*(f, g) \otimes \Lambda
\eeqn
is invertible (but whether the other composition is invertible is currently unknown). It follows from Theorem \ref{thm_equal_rank} that on the homology level, for field coefficients, the PSS and SSP maps are isomorphisms of $\Lambda$-vector spaces. More generally, in the forthcoming \cite{Bai_Xu_Floer}, we construct a chain map induced from a continuation map (which we do not need here) between two choices of Floer data $(H_t, J_t)$ and $(H_t', J_t')$ which induces an isomorphism on homology.\footnote{This result already follows from the exisiting construction in \cite[Appendix A]{Abouzaid_Blumberg_2024} because the FOP perturbation scheme can be applied to the regularizations of flow bimodule constructed from the moduli spaces of continuation maps discussed in \emph{loc.cit.}, which induces the desired chain maps between Floer complexes defined by different data.}

\subsubsection{Alternate Seidel maps}

Now consider a bimodule over $(\mb{F}^{\rm Morse}, {\mb F}^{\rm Morse})$, denoted by $\mb{M}^{\rm Morse}_{\bf z} (\phi)$, where ${\bf z}$ is a $k$-tuple of distinct marked points on $S^2 \setminus \{0, \infty\}$. First, for each class $\tilde A \in \pi_2^{\rm graph}(\tilde M_\phi)$ and each pair of critical points $y_0, y_\infty$ of the Morse function $f: M \to {\mb R}$, consider the moduli space of pearly object, consisting of a downward gradient ray $\xi_0: (-\infty, 0] \to M$ starting from $y_0$, a $\tilde J_\phi$-holomorphic graph $u: S^2 \to \tilde M_\phi$ of class $\tilde A$, and a downward gradient ray $\xi_\infty: [0, +\infty) \to M$ ending at $y_\infty$, such that
\beqn
\xi_0(0) = u(z_0),\ \xi_\infty(0) = u(z_\infty)
\eeqn
(for which we fix identifications $M_0 \cong M \cong M_\infty$). One can compactify this moduli space by allowing breaking of gradient rays and bubbling of holomorphic spheres in the fibers of $\tilde M_\phi$, while keeping the fixed marking ${\bf z}$. Denote the moduli space by 
\beqn
\ov{\mc M}{}_{y_0 y_\infty, {\bf z}}^{\rm graph}(\tilde J_\phi, \tilde A).
\eeqn
Then to each pair $((a, p), (b, q))$ of objects of $\mb{F}^{\rm Morse}$, define 
\beqn
M_{(a, p)\ (b, q)\ {\bf z}}^{\rm Morse}(\phi)
\eeqn
to be the union of $\ov{\mc M}{}_{pq, {\bf z}}^{\rm graph}(\tilde J_\phi, \tilde A)$ for all $\tilde A \in \pi_2^{\rm graph}(\tilde M_\phi)$ with $\tilde \Omega_\phi(\tilde A) = b - a$. It is straightforward to see that this provides a bimodule over $(\mb{F}^{\rm Morse}, \mb{F}^{\rm Morse})$, denoted by $\mb{M}_{\bf z}^{\rm Morse}(\phi)$. 

To define a chain map, notice that one has already constructed a derived orbifold chart for each degree $\tilde A$ for the graph moduli space. One can choose a FOP transverse perturbation for each such derived orbifold chart such that the evaluation maps restricted to the zero locus (in the free part) is transverse to (un)stable manifolds of critical point. Hence one obtains (well-defined) counts
\beqn
n_{pq}(\tilde A) \in {\mb Z}.
\eeqn
Sending these integers to the field ${\mb k}$ and weight them by $q^{\tilde \Omega_\phi(\tilde A)}$, one obtains a chain map 
\beqn
S_{\bf z}^{\rm Morse}(\phi): CM_*(f, g)\otimes \Lambda \to CM_*(f, g)\otimes \Lambda.
\eeqn
Denote the induced map on homology still by 
\beqn
S_{\bf z}^{\rm Morse}(\phi): H_*(M; \Lambda) \to H_*(M; \Lambda).
\eeqn

\begin{lemma}\label{lem:morse-cycle} 
On homology level, $S_{\bf z}^{\rm Morse}(\phi)$ is independent of ${\bf z}$ thus only depends on the number of markings $k$. Hence we can denote it by $S^{\rm Morse}_k (\phi)$. Moreover, $S_k^{\rm Morse}(\phi) = {\rm PD} \circ S_k (\phi)\circ {\rm PD}$.
\end{lemma}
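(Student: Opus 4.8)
The plan is to first establish the independence of $S_{\bf z}^{\rm Morse}(\phi)$ on ${\bf z}$ on the chain-homotopy level, and then to identify the induced map on homology with the conjugate of the Seidel map $S_k(\phi)$ from the previous construction. For the first part, I would argue exactly as in the proof of Theorem \ref{thm_Seidel_representation}: any two $k$-tuples ${\bf z}$, ${\bf z}'$ of distinct marked points on $S^2 \setminus \{0, \infty\}$ are connected by a path $({\bf z}_t)$, along which the pearly moduli spaces $\ov{\mc M}{}_{y_0 y_\infty, {\bf z}_t}^{\rm graph}(\tilde J_\phi, \tilde A)$ assemble into a moduli space with boundary that realizes a flow bimodule cobordism (equivalently, a homotopy of bimodules over $(\mb{F}^{\rm Morse}, \mb{F}^{\rm Morse})$). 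Applying the FOP perturbation scheme to the associated derived orbifold charts with boundary, and using that FOP pseudocycles are cobordism invariants (Theorem \ref{thm_FOP_property} together with the CUDV property to extend compatibly chosen perturbations on the two ends), one gets that $S_{{\bf z}}^{\rm Morse}(\phi)$ and $S_{{\bf z}'}^{\rm Morse}(\phi)$ are chain homotopic, hence equal on $H_*(M; \Lambda)$. This licenses the notation $S^{\rm Morse}_k(\phi)$.

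For the second part, the natural approach is to compare the two constructions directly through Poincaré duality rather than by a further geometric argument. Both $S_k(\phi)$ and $S_k^{\rm Morse}(\phi)$ are built from FOP-transverse perturbations of (the derived orbifold chart lift of) essentially the same graph moduli space $\ov{\mc M}{}_{0,2,{\bf z}}^{\rm graph}(\tilde J_\phi, \tilde A)$ with its evaluation map $\ev = \ev_0 \times \ev_\infty$ to $M_0 \times M_\infty$. The cohomological version $S_{\bf z}(\phi)$ was defined by pairing the pushed-forward FOP pseudocycle with cohomology classes $\alpha_0, e_\beta$ and then inserting the inverse intersection form $g^{\beta\gamma}$; on the Morse side, the counts $n_{pq}(\tilde A)$ are obtained by making the restricted evaluation map transverse to (un)stable manifolds of critical points $p, q$, i.e., by intersecting the FOP pseudocycle with the Morse cycles $W^u(p) \times W^s(q)$ (or $W^s \times W^u$, depending on the orientation conventions for the downward rays $\xi_0, \xi_\infty$). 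Under the standard identification of Morse (co)homology with singular (co)homology, the class $[W^u(p)]$ is Poincaré dual to the cochain dual to $p$, and a stable-manifold insertion corresponds to a homology class; the matrix $g_{\beta\gamma}$ is precisely the Morse intersection pairing between ascending and descending cycles. So the comparison reduces to a bookkeeping identity: evaluating the graph GW correlator $\GGW$ on $\alpha_0$ and a basis element $e_\beta$ and raising an index with $g^{\beta\gamma}$ matches, after transport along $\rm PD$, the count of FOP-perturbed pearly configurations from $p$ to $q$. I would phrase this as: the matrix of $S^{\rm Morse}_k(\phi)$ in the basis of critical points equals the matrix of $S_k(\phi)$ in the Poincaré-dual basis, which is exactly the statement $S_k^{\rm Morse}(\phi) = {\rm PD}\circ S_k(\phi)\circ {\rm PD}$.

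The main obstacle I expect is not conceptual but a matter of carefully aligning three things so the transversality arguments genuinely coincide: (i) the FOP-transverse perturbation of the derived orbifold chart $\tilde{\mc C}$ used to define $\GGW_{0,2,{\bf z}}^{\tilde M_\phi, \tilde A}$ must be chosen compatibly with the one used to define the counts $n_{pq}(\tilde A)$, which requires that the chart is the \emph{same} (or at least cobordant) and that $\wt\ev$ is a submersion so that preimages of (un)stable manifolds are cut out transversally on the free part of the zero locus — this is guaranteed by Proposition \ref{prop:d-chart}, but one must check that attaching the gradient half-rays $\xi_0, \xi_\infty$ only modifies the chart by a fiber product with finite-dimensional manifolds (the (un)stable manifolds), which does not disturb FOP transversality by item (1) and the CUDV property of Theorem \ref{thm_FOP_property}; (ii) orientations and sign conventions — the Poincaré duality sign, the orientation of $W^u(p) \times W^s(q)$ versus the product orientation on $M_0 \times M_\infty$, and the orientation of the FOP pseudocycle — must be tracked so the identity holds on the nose rather than up to a sign; (iii) handling the fact that the FOP-perturbed zero locus is only a pseudocycle, so one works with its intersection-theoretic pairing with cycles (as in the Appendix on pseudocycles) rather than a naive transverse preimage, and one must invoke that the pearly compactification's codimension-$1$ boundary (breakings of $\xi_0$, $\xi_\infty$, and fiber bubbling) contributes the bimodule structure maps and hence the chain-map property, with the codimension-$2$ boundary of the FOP pseudocycle (of dimension $\leq \dim\tilde{\mc U} - \mathrm{rank}\,\tilde{\mc E} - 2$, item (7)) being negligible for the relevant pairings. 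Once these alignments are in place, the equality is a direct translation.
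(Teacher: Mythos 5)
Your proposal follows essentially the same route as the paper: the ${\bf z}$-independence via a path of marked tuples giving a bimodule homotopy/cobordism of charts (as in Theorem \ref{thm_Seidel_representation}), and the identification with ${\rm PD}\circ S_k(\phi)\circ{\rm PD}$ by representing Morse cycles by (un)stable-manifold pseudocycles and matching the homology--cohomology pairing with transverse intersection counts of ($p$-)pseudocycles, exactly as the paper does via Schwarz's result and the Appendix. The only small point the paper makes explicit that you leave implicit is the reduction from a general coefficient field to the prime field $\mathbb{F}_p$ followed by the universal coefficient theorem, but this does not change the argument.
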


\begin{proof}
The independence on ${\bf z}$ is similar to the proof of Theorem \ref{thm_Seidel_representation}. For different choices with the same number of markings, the induced flow bimodule are homotopic to each other. Therefore, the induced maps on homology agree. Next we compare the Morse-theoretic Seidel map and the Seidel map constructed in the previous subsection. In integer coefficients, for any Morse cycle, the (weighted) union of unstable manifolds defines a pseudocycle (see \cite{Schwarz_equivalence}). The same argument can be applied to the case of $\mb{F}_p$ coefficients. On the other hand, the pairing between homology and cohomology with ${\mb F}_p$ coefficients can be identified with the count of transverse intersection points between representing $p$-pseudocycles. Hence the identity holds for $\mb{k} = \mb{F}_p$. For general finite field $\mb{k}$ this follows from the universal coefficient theorem.
\end{proof}

Now we define a bimodule $\mb{M}_{\bf z}^{\rm PSS}(\phi)$ over $(\mb{F}^{\rm Morse}, \mb{F}^{\rm Floer})$, and another bimodule $\mb{M}_{\bf z}^{\rm SSP}(\phi)$ over $(\mb{F}^{\rm Floer}, \mb{F}^{\rm Morse})$. When $\phi = 1$, they are the same as the PSS and SSP bimodules considered in \cite{Bai_Xu_Arnold, Bai_Xu_Floer}. For a general loop $\phi: S^1 \to {\rm Ham}(M, \omega_M)$, the defining equation (over a disc with a cylindrical end) just includes the twisting by the loop $\phi$ and the moduli space only has the additional ingredients of the fixed marking ${\bf z}$ on the domains. Therefore, the construction of an associated Kuranishi chart lift and the chain map is completely analogous to the special case when $\phi = 1$. Therefore, one obtains linear maps between homologies
\beqn
S_{\bf z}^{\rm PSS}(\phi): H_*(M; \Lambda) \to HF_*(M; \Lambda),\ S_{\bf z}^{\rm SSP}(\phi): HF_*(M; \Lambda) \to H_*(M; \Lambda).
\eeqn

\begin{prop}\label{prop_composition}
On the homology level, the maps $S_{\bf z}^{\rm PSS}(\phi)$ and $S_{\bf z}^{\rm SSP}(\phi)$ do not depend on ${\bf z}$ but only the number of markings $k$. Hence we can denote them by $S_k^{\rm PSS}(\phi)$ and $S_k^{\rm SSP}(\phi)$. Moreover,
\beqn
S_k^{\rm SSP}(\psi) \circ S_l^{\rm PSS}(\phi) = S_{k+l}^{\rm Morse} (\phi \# \psi),\ \forall \phi, \psi: S^1 \to {\rm Ham}(M, \omega_M).
\eeqn
\end{prop}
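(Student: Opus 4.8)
The plan is to prove the $\mathbf{z}$-independence and the composition formula in two separate stages, since they are logically distinct. For the $\mathbf{z}$-independence of $S_{\bf z}^{\rm PSS}(\phi)$ and $S_{\bf z}^{\rm SSP}(\phi)$, I would run exactly the same argument already used for $S_{\bf z}^{\rm Morse}(\phi)$ in Lemma \ref{lem:morse-cycle}: any two tuples $\mathbf{z}$ and $\mathbf{z}'$ of $k$ distinct points of $S^2 \setminus \{0,\infty\}$ are joined by a path, and moving the marked points along this path produces a cobordism of the derived orbifold chart lifts of the relevant pearly/PSS/SSP moduli spaces (as a cobordism of flow bimodules over $(\mb{F}^{\rm Morse}, \mb{F}^{\rm Floer})$, resp.\ $(\mb{F}^{\rm Floer},\mb{F}^{\rm Morse})$, in the sense recalled above). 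Since FOP pseudocycles and hence the induced chain maps are invariant under such cobordisms up to chain homotopy, the induced maps on homology do not change. This lets us write $S_k^{\rm PSS}(\phi)$ and $S_k^{\rm SSP}(\phi)$.

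For the composition formula, the strategy is the standard gluing/degeneration argument for flow bimodules, now upgraded to work with the FOP perturbation scheme. I would form the composed bimodule $\mb{M}_{\bf z}^{\rm SSP}(\psi) \circ \mb{M}_{{\bf z}'}^{\rm PSS}(\phi)$ over $(\mb{F}^{\rm Morse}, \mb{F}^{\rm Morse})$; by the general formalism of flow-bimodule composition (see \cite[Section 4]{Abouzaid_Blumberg_2024} and the version in \cite{Bai_Xu_Floer}), the map it induces on homology is $S_k^{\rm SSP}(\psi) \circ S_l^{\rm PSS}(\phi)$, because a composition of bimodules induces the composition of the chain maps. On the other hand, there is a geometric one-parameter family: glue the disc carrying the twisting by $\phi$ (with its $l$ marked points) to the disc carrying the twisting by $\psi$ (with its $k$ marked points) along their cylindrical ends, passing to a single disc whose twisting is the product loop $\phi \# \psi$ and which carries $k+l$ fixed marked points. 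This degeneration exhibits a homotopy of flow bimodules between $\mb{M}_{\bf z}^{\rm SSP}(\psi)\circ \mb{M}_{{\bf z}'}^{\rm PSS}(\phi)$ and $\mb{M}_{{\bf z}''}^{\rm Morse}(\phi \# \psi)$ (for $\mathbf{z}''$ a $(k+l)$-tuple obtained from $\mathbf{z},\mathbf{z}'$), once one checks that the Floer orbits appearing at the interface contribute the full identity, which uses that the SSP--PSS composition is an isomorphism on Floer homology and that $\dim_\Lambda HF_* = \operatorname{rank} H_*(M;\mathbb{k})$ (Theorem \ref{thm_equal_rank}). Homotopic bimodules induce chain-homotopic maps, so the induced maps on homology agree, giving $S_k^{\rm SSP}(\psi)\circ S_l^{\rm PSS}(\phi) = S_{k+l}^{\rm Morse}(\phi\#\psi)$.

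The main obstacle is the gluing/degeneration analysis in the presence of FOP perturbations: one must produce a genuine cobordism (with corners) of derived orbifold charts realizing the neck-stretching family, and verify that the FOP-transverse perturbations can be chosen compatibly on the two ends and interpolated across the family while respecting the normally complex structures. Concretely, the neck region degenerates the domain curve into two discs joined along a cylinder, introducing boundary/corner strata indexed by intermediate Floer orbits; the Kuranishi-lift framework of \cite{Bai_Xu_Arnold, Bai_Xu_Floer} (with its outer-collaring step) must accommodate the extra fixed markings $\mathbf{z}$, which is routine since the markings sit away from the nodes and the principal components. The subtle point is ensuring the contribution of the interface is exactly the identity on $HF_*$ rather than merely an isomorphism, which forces us to invoke Theorem \ref{thm_equal_rank} together with the fact, recalled above, that $\Psi^{\rm SSP}\circ\Psi^{\rm PSS}$ is invertible; this is what makes the right-hand side land on $S_{k+l}^{\rm Morse}(\phi\#\psi)$ with no correction factor. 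Once this gluing picture is in place, the rest is a formal diagram chase in the homotopy category of flow bimodules.
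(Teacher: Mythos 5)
Your overall route coincides with the paper's: the $\mathbf z$-independence is obtained by moving the marked points along a path and producing a cobordism of the associated Kuranishi lifts (exactly as in Lemma \ref{lem:morse-cycle} and Theorem \ref{thm_Seidel_representation}), and the composition formula is obtained by viewing the left-hand side as the map induced by the composed flow bimodule $\mb{M}^{\rm SSP}_{\bf z}(\psi)\circ\mb{M}^{\rm PSS}_{{\bf z}'}(\phi)$ and then exhibiting a homotopy of bimodules, via the gluing/neck-length family, to $\mb{M}^{\rm Morse}_{{\bf z}''}(\phi\#\psi)$. The paper does precisely this, delegating the construction of the Kuranishi lifts and compatible FOP perturbations to the $\phi=\psi=1$ case treated in \cite{Bai_Xu_Arnold}.

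However, one step in your argument is off: the claim that one must ``check that the Floer orbits appearing at the interface contribute the full identity,'' and that this requires Theorem \ref{thm_equal_rank} together with the invertibility of $\Psi^{\rm SSP}\circ\Psi^{\rm PSS}$. No such check is needed, and those inputs would not establish anything of that kind anyway. The composed bimodule induces the composition of the chain maps purely formally: its rigid elements are broken configurations passing through intermediate Floer generators, so the resulting matrix is exactly the product $\Psi^{\rm SSP}(\psi)\circ\Psi^{\rm PSS}(\phi)$, provided the FOP-transverse perturbations are chosen compatibly (product-like) near the breaking locus --- that compatibility, together with the corresponding statement for the degeneration family carrying the extra fixed markings, is the genuinely technical content, and it is what the paper handles by analogy with \cite{Bai_Xu_Arnold}. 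The gluing homotopy then identifies the composed bimodule with the Morse Seidel bimodule for $\phi\#\psi$ with $k+l$ markings, with no correction factor to rule out. In the paper, Theorem \ref{thm_equal_rank} and the invertibility of $\Psi^{\rm SSP}\circ\Psi^{\rm PSS}$ enter only in the subsequent proposition, where injectivity/surjectivity of the twisted PSS/SSP maps is upgraded to invertibility; importing them into the proof of the composition formula misidentifies what has to be verified and would leave a reader unsure whether your homotopy of bimodules is unconditional (it is).
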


\begin{proof}
The left hand side can be viewed as the numerical output of composing two flow bimodules and the equality can be viewed as the homological consequence of a homotopy of bimodules. The proof, which relies on constructing Kuranishi lifts and FOP perturbations, is completely analogous to the case of \cite{Bai_Xu_Arnold} which proves the equality for $\phi = \psi = 1$. 
\end{proof}

%\textcolor{purple}{We could include conjugation invariance of Floer theory (either by pulling back via a symplectic automorphism or a loop of Hamiltonian diffeomorphism in the non-autonomous situation) in our package, which manifestly shows that a loop of Hamiltonians defines an automorphism on Floer homology following Seidel's original approach.}

\begin{prop}
For any $\phi: S^1 \to {\rm Ham}(M, \omega_M)$ and $k \geq 0$, the maps $S_k^{\rm PSS}(\phi): H_*(M; \Lambda) \to HF_*(M; \Lambda)$ and $S_k^{\rm SSP}(\phi): HF_*(M; \Lambda) \to H_*(M; \Lambda)$ are both invertible. 
\end{prop}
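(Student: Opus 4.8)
The plan is to establish invertibility by combining the rank equality from Theorem~\ref{thm_equal_rank}, the composition formula of Proposition~\ref{prop_composition}, and the already-established invertibility facts for the $\phi = 1$ case. First, I would apply Proposition~\ref{prop_composition} with $\psi$ replaced by $\phi^{-1}$ (the inverse loop) and $\phi$ kept, so that $\phi \# \phi^{-1}$ is homotopic to the constant loop $1$: this yields
\beqn
S_l^{\rm SSP}(\phi^{-1}) \circ S_k^{\rm PSS}(\phi) = S_{k+l}^{\rm Morse}(1)
\eeqn
on homology. By the discussion in Subsection~\ref{subsection_invertibility}, $S_m(1)$ is invertible for every $m$, and by Lemma~\ref{lem:morse-cycle} we have $S_m^{\rm Morse}(1) = {\rm PD}\circ S_m(1)\circ {\rm PD}$, which is therefore also invertible. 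Hence $S_k^{\rm PSS}(\phi)$ is injective and $S_l^{\rm SSP}(\phi^{-1})$ is surjective, for all $k, l \geq 0$; since $\phi$ was arbitrary, replacing $\phi$ by $\phi^{-1}$ shows that $S_l^{\rm SSP}(\phi)$ is surjective and $S_k^{\rm PSS}(\phi)$ is injective for all loops.

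Next I would promote injectivity/surjectivity to bijectivity using the rank count. By Theorem~\ref{thm_equal_rank}, $\dim_\Lambda HF_*(M;\Lambda) = {\rm rank}\, H_*(M;\mathbb{k}) = \dim_\Lambda H_*(M;\Lambda)$, so the source and target of $S_k^{\rm PSS}(\phi): H_*(M;\Lambda) \to HF_*(M;\Lambda)$ are finite-dimensional $\Lambda$-vector spaces of the \emph{same} dimension. An injective linear endomorphism-type map between finite-dimensional vector spaces of equal dimension is automatically an isomorphism; hence $S_k^{\rm PSS}(\phi)$ is invertible. Dually, $S_l^{\rm SSP}(\phi)$, being surjective between spaces of equal finite dimension, is also invertible. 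This proves the Proposition for all $\phi$ and all $k \geq 0$.

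I expect the only subtle point to be the bookkeeping of the composition formula: one must verify that $\phi \# \phi^{-1}$ is indeed homotopic to the constant loop in ${\rm Ham}(M,\omega_M)$ (standard, since $\pi_1$ of a topological group is abelian and inverses exist) and that the number of fixed marked points on the two factors may be chosen freely, so that the left side of Proposition~\ref{prop_composition} covers all pairs $(k,l)$; in particular, taking $l = 0$ suffices, since $S_0^{\rm PSS}(\phi)$ and $S_0^{\rm SSP}(\phi^{-1})$ already compose to an invertible map, and the independence-of-$k$ statements in Proposition~\ref{prop_composition} then handle the general case. The genuinely nontrivial inputs — Theorem~\ref{thm_equal_rank} and the invertibility of $\Psi^{\rm SSP}\circ\Psi^{\rm PSS}$ underlying the $\phi=1$ case — are quoted as established, so the argument here is essentially a formal deduction from them.
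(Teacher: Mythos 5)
Your proposal is correct and follows essentially the same route as the paper: apply the composition formula of Proposition \ref{prop_composition} with $\psi=\phi^{-1}$ to get $S^{\rm SSP}(\phi^{-1})\circ S^{\rm PSS}(\phi)=S^{\rm Morse}(1)$, which is invertible by the $\phi=1$ case (via Lemma \ref{lem:morse-cycle}), giving injectivity of PSS and surjectivity of SSP, and then use the dimension equality of Theorem \ref{thm_equal_rank} to upgrade both to isomorphisms. The extra bookkeeping you flag (homotopy triviality of $\phi\#\phi^{-1}$ and the choice of marked-point counts) is handled implicitly in the paper and poses no issue.
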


\begin{proof}
Proposition \ref{prop_composition} implies $S_k^{\rm SSP}(\phi^{-1}) \circ S^{\rm PSS}_l(\phi) = S_{k+l} (1)$ which is invertible. It implies that the PSS maps are injective and the SSP maps are surjective. By Theorem \ref{thm_equal_rank}, when the ground ring $R$ is a field, as $\Lambda$-vector spaces, $HF^*(M; \Lambda)$ is isomorphic to $H^*(M; \Lambda)$. Hence the PSS and the SSP maps are both invertible. 
\end{proof}

\begin{proof}[Proof of Theorem \ref{thm_invertible}]
By composing $S_k^{\rm SSP}(\phi)$ with $S^{\rm PSS}(1)$, it follows from Proposition \ref{prop_composition} again that $S_k^{\rm Morse} (\phi)$ is invertible. Therefore, by Lemma \ref{lem:morse-cycle}, we conclude that $S_k (\phi)$ is invertible.
\end{proof}

\section{Cohomological splitting}\label{section4}

We prove the main theorems of this paper assuming the follwoing technical statement, whose proof is deferred to Section \ref{section5}.

\begin{thm}\label{thm41}
Let $(B, \omega_B)$ be a compact symplectic manifold with a nonzero Gromov--Witten invariant
\beqn
\GW_{0, k+2}^{B, A}([{\rm pt}], [{\rm pt}], a_1, \ldots, a_k) \neq 0
\eeqn
for $a_1, \ldots, a_k \in H^*(B; {\mb Z})$ and a curve class $A \in H_2(B; {\mb Z})$. Moreover, suppose there exist a compatible almost complex structure $J_B$, points $p_0, p_\infty \in B$, and pseudocycle representatives 
\beqn
f_i: W_i \to B
\eeqn
of the homology classes ${\rm PD}_B(a_i)$ satisfying the following conditions.
\begin{enumerate}

\item Inside the set of simple curves ${\mc M}{}_{0, k+2}^{\rm simple}(J_B, A) \subset \ov{\mc M}{}_{0,k+2}(J_B, A)$ there is an open subset ${\mc M}_{0, k+2}^{\rm reg}(J_B, A)$ which is cut out transversely.

\item There holds
\begin{multline}\label{eqn40}
\ev \Big( \ov{\mc M}{}_{0,k+2}(J_B, A) \Big) \cap \Big( \{ p_0 \}\times \{p_\infty\}\times \prod_{i=1}^k \ov{f_i (W_i)} \Big) \\
= \ev \Big( {\mc M}{}_{0, k+2}^{\rm reg}(J_B, A) \Big) \cap \Big( \{ p_0 \}\times \{p_\infty\} \times \prod_{i=1}^k f_i(W_i) \Big)
\end{multline}
and the intersection on the right hand side is transverse. 
\end{enumerate}
In particular, the Gromov--Witten invariant $\GW_{0,k+2}^{B, A}([{\rm pt}], [{\rm pt}], a_1, \ldots, a_k)$ is an integer. Then for any Hamiltonian fibration $P \to B$ with fiber $(M, \omega_M)$ and any coefficient field ${\mb k}$ whose characteristic does not divide this Gromov--Witten invariant, there is an isomorphism of graded $\mb{k}$-vector spaces
\beqn
H^*(P; {\mb k}) \cong H^*(B; {\mb k}) \otimes_{\mb k} H^*(M; {\mb k}).
\eeqn
\end{thm}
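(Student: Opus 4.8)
The plan is to prove the cohomological splitting by establishing, via the Leray--Hirsch theorem, that the restriction map $H^*(P;\mathbb{k}) \to H^*(M;\mathbb{k})$ is surjective, equivalently that the inclusion $\iota_*\colon H_*(M;\mathbb{k}) \to H_*(P;\mathbb{k})$ is injective. To do this I would prove a nonvanishing result for $\mathbb{k}$-valued Gromov--Witten invariants of $P$ of the shape stated in Theorem \ref{thm14}: for every nonzero $\beta_0 \in H_*(M;\mathbb{k})$ there is a class $\beta_\infty \in H_*(M;\mathbb{k})$ and a lift $\tilde A \in H_2(P;\mathbb{Z})$ such that $\mathrm{GW}^{P,\mathbb{k};\tilde A}_{0,k+2}\big(\mathrm{PD}(\iota_*\beta_0),\mathrm{PD}(\iota_*\beta_\infty), \pi_P^*a_1,\dots,\pi_P^*a_k\big)\neq 0$. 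Since this invariant factors through $\iota_*\beta_0$, its nonvanishing forces $\iota_*\beta_0 \neq 0$, giving the required injectivity.

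\medskip

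The heart of the argument is a \textbf{fiber-product/splitting of moduli spaces} over the base. First, using hypotheses (1) and (2) of the theorem together with the classical regularization of McDuff--Salamon, I would arrange that the only stable maps to $B$ contributing to $\mathrm{GW}_{0,k+2}^{B,A}([\mathrm{pt}],[\mathrm{pt}],a_1,\dots,a_k)$ are \emph{simple curves with smooth (irreducible) domain}, passing through the generic points $p_0,p_\infty$ and the pseudocycles $f_i(W_i)$, and that these form a finite transverse intersection whose signed count equals the (nonzero, $\mathbb{k}$-invertible) integer $N:=\mathrm{GW}_{0,k+2}^{B,A}(\dots)$. For each such curve $C\subset B$ (the image of an embedded sphere through $p_0,p_\infty$), the restricted fibration $P|_C \to C \cong \mathbb{CP}^1$ is a Hamiltonian fibration over $S^2$, and the moduli space of stable holomorphic \emph{sections} of $P|_C$ through prescribed fiber constraints over $p_0,p_\infty$ is exactly the graph moduli space of Section \ref{section3}. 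The plan is to build an AMS-style (singular) global Kuranishi chart for the relevant moduli space of curves in $P$ which is regular/smooth precisely over the locus lying above $\mathcal{M}^{\mathrm{reg}}_{0,k+2}(J_B,A)$, and over that locus exhibits a local product decomposition as $(\text{transverse finite set in }B)\times(\text{graph moduli space of }P|_C)$. Then FOP transversality (using the CUDV and extension properties of Theorem \ref{thm_FOP_property}) lets me perturb compatibly so the $\mathbb{k}$-count of the total moduli space is $N$ times the graph Gromov--Witten count over a single fiber $\mathbb{CP}^1$.

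\medskip

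With that structural decomposition in hand, the remaining input is the nondegeneracy of the Seidel-type pairing on $M$: by Corollary \ref{cor_nondegenerate} (which rests on Theorem \ref{thm_invertible}, the invertibility of $S_k(\phi)$), the bilinear form $(\beta_0,\beta_\infty)\mapsto \langle\beta_0,\beta_\infty\rangle_{k,\phi}$ coming from the graph Gromov--Witten invariants of $P|_C$ is nondegenerate over the Novikov field. Hence for any nonzero $\beta_0$ there is $\beta_\infty$ making a single graph invariant of $P|_C$ nonzero; multiplying by the factor $N$ (which survives in $\mathbb{k}$ by the characteristic hypothesis) yields a nonzero $\mathbb{k}$-valued Gromov--Witten invariant of $P$, proving Theorem \ref{thm14} and hence $\iota_*$ injective. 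Leray--Hirsch then gives the graded isomorphism $H^*(P;\mathbb{k})\cong H^*(B;\mathbb{k})\otimes_{\mathbb{k}} H^*(M;\mathbb{k})$.

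\medskip

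The main obstacle I anticipate is the \textbf{construction and compatible perturbation of the global Kuranishi chart for curves in $P$ that restricts correctly to the base}: one must show that the forgetful map $\overline{\mathcal{M}}_{0,k+2}(P,\tilde A)\to\overline{\mathcal{M}}_{0,k+2}(B,A)$ lifts to a map of (singular) global Kuranishi charts which is an honest smooth submersion with fiber the graph moduli chart precisely over the regular locus in the base, and that away from that locus the contributions are dimensionally negligible (so they do not affect the FOP count). Controlling the interplay between bubbling in $P$ that maps to bubbling in $B$ versus bubbling in the fibers, while keeping the stable/normal complex structures and FOP transversality coherent across the two regularization schemes (classical for $B$, FOP for the fiber direction), is the delicate technical core; this is exactly what Theorem \ref{thm41}'s hypotheses (1)--(2) are engineered to make tractable, and it is carried out in detail in Section \ref{section5}.
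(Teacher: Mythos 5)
Your proposal follows essentially the same route as the paper's proof: the Leray--Hirsch reduction to injectivity of $\iota_*$, a nonvanishing statement for ${\mb k}$-valued Gromov--Witten invariants of $P$ obtained by decomposing the relevant moduli space over the finitely many transverse base curves through $p_0,p_\infty$ and the $f_i(W_i)$, AMS-style (singular) global Kuranishi charts with compatible FOP perturbations built via the stabilization and CUDV properties, and finally the nondegeneracy of the graph Gromov--Witten pairing coming from invertibility of the Seidel-type maps. The only minor cosmetic difference is that the paper rules out contributions from the locus over singular/non-regular base curves not by a dimension count but by the avoidance hypothesis \eqref{eqn40} combined with choosing the FOP perturbation $C^0$-close to the Kuranishi section, which does not change the structure of the argument.
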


\begin{proof}[Proof of Theorem \ref{thm:split}]
If $(B, \omega_B)$ is monotone, it is standard that for a generic compatible almost complex structure $J_B$, the moduli space ${\mc M}{}_{0,k+2}^{\rm simple}(J_B, A)$ is regular, the evaluation map restricted to ${\mc M}{}_{0,k+2}^{\rm simple}(J_B, A)$ is a pseudocycle, and the Gromov--Witten invariants are defined via counting transverse intersections of pseudocycles (see \cite[Chapter 6]{McDuff_Salamon_2004}). Thus Theorem \ref{thm:split} follows from Theorem \ref{thm41}.
\end{proof}

%\textcolor{red}{We should be careful with the compatibility with orientations: it may follow from an elementary index argument.}

\subsection{Proof of Theorem \ref{thm:B}}

Let $B$ be a smooth and stably rational projective variety of complex dimension $n$, meaning that $B \times \mb{CP}^r$ is birational to $\mb{CP}^{n+r}$. We first notice that by Lalonde--McDuff's ``surjection lemma'' (\cite[Lemma 4.1]{Lalonde_McDuff_2003}), it suffices to prove for the case that $B$ is rational. In this case, there is a rational map $\mb{CP}^n \dashedrightarrow B$. By Hironaka's theorem on resolution of indeterminacies (see \cite[Main Theorem II]{Hironaka_resolution}), there is another smooth projective variety $\tilde B$ fitting in a commutative diagram
\beqn
\xymatrix{  & \tilde B \ar[ld]_{\pi_1} \ar[rd]^{\pi_2} & \\
         \mb{CP}^n \ar@{..>}[rr] & &   B }
         \eeqn
where $\pi_1$ is the compositions of blowups with smooth centers and $\pi_2: \tilde B \to B$ is a genuine (birational) morphism. %This would always be the case if the strong factorization conjecture holds for rational varieties. 
Then $\pi_1$ identifies a Zariski open set of $\tilde B$ with $\mb{CP}^n \setminus Z$ where $Z$ is the union of subvarieties of complex codimension at least two. Let $A \in \pi_2(\mb{CP}^n)$ be the line class. As a general line avoids $Z$, there is a natural lift $A \in \pi_2(\tilde B)$. 

\begin{lemma}\label{lemma51}
Let $\ov{\mc M}{}_{0,2}(J_{\tilde B}, A)$ be the moduli space of genus zero $2$-marked stable maps in class $A$ and $\ev: \ov{\mc M}{}_{0,2}(J_{\tilde B}, A) \to \tilde B^2$ be the evaluation map. 
\begin{enumerate}

\item There is an open subset ${\mc M}{}_{0,2}^{\rm reg}(J_{\tilde B}, A) \subset {\mc M}{}_{0,2}^{\rm simple}(J_{\tilde B}, A)$ which is transverse.

\item For a general pair of points $(p, q) \in \tilde B$, the preimage $\ev^{-1}((p, q))$ is a single point contained in ${\mc M}{}_{0,2}^{\rm reg} (J_{\tilde B}, A)$ and the intersection is transverse.

\item $\GW_{0,2}^{\tilde B, A}([{\rm pt}], [{\rm pt}]) = 1$.
\end{enumerate}
\end{lemma}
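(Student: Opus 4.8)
The plan is to work with the integrable complex structure $J_{\tilde B}$ underlying a K\"ahler form on $\tilde B$ and to exploit the birational morphism $\pi_1:\tilde B\to\mb{CP}^n$. Write $E\subset\tilde B$ for the exceptional locus of $\pi_1$, so that $\pi_1$ restricts to a biholomorphism $\tilde B\setminus E\xrightarrow{\ \sim\ }\mb{CP}^n\setminus Z$, and recall that $A$ is the class of the (strict) transform $\tilde\ell$ of a general line $\ell\subset\mb{CP}^n$ disjoint from $Z$, so that $\pi_1^*\mc O(1)\cdot A=1$ and $c_1(T\tilde B)\cdot A=n+1$. The observation driving the argument is that a non-constant genus zero curve $u$ in class $A$ whose image is disjoint from $E$ maps isomorphically onto the transform of a line (since $\pi_1\circ u$ is then a degree one map onto a line in $\mb{CP}^n$), so that $u^*T\tilde B\cong u^*\pi_1^*T\mb{CP}^n\cong\mc O_{\mb{CP}^1}(2)\oplus\mc O_{\mb{CP}^1}(1)^{\oplus(n-1)}$, whose $H^1$ vanishes; hence $u$ is automatically unobstructed and embedded.

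For part (1), I would let $\mc M{}_{0,2}^{\rm reg}(J_{\tilde B},A)\subset\mc M{}_{0,2}^{\rm simple}(J_{\tilde B},A)$ be the locus of curves whose image is disjoint from $E$. This is open ($E$ is closed and domains are compact) and nonempty (it contains $\tilde\ell$ with any two marked points), and by the observation above the linearization of the $\ov\partial$-operator is surjective along it, so it is cut out transversely of real dimension $2n+2(n+1)-2=4n=\dim_{\mb R}\tilde B^2$.

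For part (2), I would take $(p,q)\in\tilde B^2$ general, so that $p,q\notin E$, $p\neq q$, and — using $\dim_{\mb C}Z\le n-2$, which makes the pairs of points of $\mb{CP}^n$ whose joining line meets $Z$ a proper subvariety of $\mb{CP}^n\times\mb{CP}^n$ — the unique line $\ell$ through $\pi_1(p)$ and $\pi_1(q)$ is disjoint from $Z$. Given any stable map $u:C\to\tilde B$ in class $A$ whose two marked points go to $p$ and $q$, the reduced image $\pi_1(u(C))$ is a line (because $\pi_1^*\mc O(1)\cdot A=1$) through $\pi_1(p)$ and $\pi_1(q)$, hence equals $\ell$; since $\pi_1$ is an isomorphism over $\ell$, this gives $u(C)\subset\pi_1^{-1}(\ell)=\tilde\ell$, and as $u(C)$ is one-dimensional while $\tilde\ell\cong\mb{CP}^1$ we get $u(C)=\tilde\ell$. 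Thus $u$ is a degree one genus zero stable map onto $\mb{CP}^1$ with two marked points mapping to distinct points, and a standard stability argument (using $p\neq q$) forces its domain to be $\mb{CP}^1$ and $u$ to be an isomorphism. Hence $\ev^{-1}((p,q))$ is the single rigid point $[\tilde\ell,u^{-1}(p),u^{-1}(q)]$, which lies in $\mc M{}_{0,2}^{\rm reg}(J_{\tilde B},A)$ by construction. Transversality at this point follows because $d\ev$ is the evaluation $H^0(u^*T\tilde B)\to(u^*T\tilde B)_{u^{-1}(p)}\oplus(u^*T\tilde B)_{u^{-1}(q)}$, which for the bundle $\mc O(2)\oplus\mc O(1)^{\oplus(n-1)}$ is surjective with kernel $H^0(u^*T\tilde B(-u^{-1}(p)-u^{-1}(q)))\cong\mb C$ equal to the infinitesimal reparametrizations fixing the two marked points, so the induced map on $T_{[u]}\mc M{}_{0,2}^{\rm reg}$ is a complex linear isomorphism onto $T_{(p,q)}\tilde B^2$. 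The same analysis shows that for general $(p,q)$ the left-hand side of \eqref{eqn40} (with $k=0$) reduces to the right-hand side.

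For part (3), granting (1) and (2) the invariant $\GW_{0,2}^{\tilde B,A}([\pt],[\pt])$ is the signed count of the single point of $\ev^{-1}((p,q))$; since $J_{\tilde B}$ is integrable, $\ev$ is holomorphic there and $d\ev$ is a complex linear isomorphism, so the contribution is $+1$ and the invariant equals $1$ — which is exactly the input needed for Theorem \ref{thm41} with $k=0$, $p_0=p$, $p_\infty=q$. The step I expect to be the main obstacle is the exclusion argument inside part (2): verifying that for a general pair $(p,q)$ no stratum of the compactified moduli space beyond the transform line contributes, i.e.\ that a stable map constrained to pass through $p$ and $q$ must have image $\tilde\ell$ and irreducible domain. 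The remaining ingredients — the normal bundle splitting, the dimension count placing a general joining line off $Z$, and the positivity of the local intersection sign — are routine.
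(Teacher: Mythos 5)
Your proposal is correct and takes essentially the same route as the paper, whose own proof is a two-line sketch citing Lalonde--McDuff's Proposition 4.15: for a general pair $(p,q)$ the unique line through $\pi_1(p),\pi_1(q)$ avoids $Z$, lifts to a curve in class $A$ in $\tilde B$, and is the only stable map through $p$ and $q$. Your write-up simply supplies the details the paper defers — the splitting $u^*T\tilde B\cong\mc O(2)\oplus\mc O(1)^{\oplus(n-1)}$ giving regularity, the dimension count excluding lines meeting $Z$, the degeneration exclusion (where, to fully justify that the reduced image of $\pi_1\circ u$ is a line, one should also rule out components mapped into the exceptional locus, e.g.\ by connectivity of the image together with $\ell\cap Z=\emptyset$, or by noting such components would carry total class zero), and the $+1$ sign from complex linearity.
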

         
\begin{proof}
This statement is an easy extension of \cite[Proposition 4.15]{Lalonde_McDuff_2003}. %For the complex structure $J_{\tilde B}$ on $\tilde B$ induced from the blowup, consider the moduli space $\ov{\mc M}{}_{0,2}(J_{\tilde B}, A)$. Then the Gromov--Witten invariants $\GW_{0,2}^{\tilde B, A}({\rm pt}, {\rm pt})$ is the degree of the evaluation map (which is {\it a priori} rational). Consider the evaluation map 
%         \beqn
%         \ev: \ov{\mc M}{}_{0,2}(J_{\tilde B}, A) \to \tilde B \times \tilde B.
%         \eeqn
For any pair of points $(p, q) \in \tilde B \times \tilde B$, there is a unique line connecting $\pi_1(p)$ and $\pi_1(q)$ in $\mb{CP}^n$ which underlies a regular stable map from $\mb{P}^1$. As $\tilde B$ and $\mb{CP}^n$ are isomorphic over a Zariski open set, when $(p, q)$ is general, the line is unique and avoids the subset $Z \subset \mb{CP}^n$, hence corresponds to a curve in $\tilde B$ in class $A$. This provides the regular open subset ${\mc M}{}_{0, k+2}^{\rm reg}(J_{\tilde B}, A) \subset {\mc M}{}_{0,2}^{\rm simple}(J_{\tilde B}, A)$. For a fixed general $(p, q)$ this curve is the only one in $\ov{\mc M}{}_{0,2}(J_{\tilde B}, A)$  passing through $p$ and $q$.
\end{proof}

\begin{cor}\label{cor52}
For any Hamiltonian fibration $\tilde P \to \tilde B$ and any coefficient field ${\mb k}$, the cohomology $H^*(\tilde P; {\mb k})$ splits.
\end{cor}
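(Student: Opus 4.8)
The plan is to deduce the corollary directly from Theorem \ref{thm41} applied to the base $\tilde B$ together with the data assembled in Lemma \ref{lemma51}. The input required by Theorem \ref{thm41} is a compact symplectic manifold $(B, \omega_B)$ with a nonzero Gromov--Witten invariant $\GW_{0,k+2}^{B,A}([\pt],[\pt],a_1,\ldots,a_k)$, together with a compatible almost complex structure, a choice of marked points $p_0,p_\infty$, and pseudocycle representatives of the $\mathrm{PD}(a_i)$ satisfying the two transversality conditions (1) and (2). In the present situation we take $k=0$, so there are no auxiliary insertions $a_i$ and the pseudocycle data is vacuous; the symplectic form on $\tilde B$ is any K\"ahler form and $J_{\tilde B}$ is the integrable complex structure (or a small perturbation thereof), and $A$ is the lifted line class. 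Lemma \ref{lemma51}(3) supplies the nonvanishing invariant $\GW_{0,2}^{\tilde B, A}([\pt],[\pt]) = 1$, while Lemma \ref{lemma51}(1) provides the regular open locus ${\mc M}_{0,2}^{\rm reg}(J_{\tilde B}, A) \subset {\mc M}_{0,2}^{\rm simple}(J_{\tilde B}, A)$ required by hypothesis (1) of Theorem \ref{thm41}.

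The one point that needs a genuine argument is hypothesis (2): the identity \eqref{eqn40}, which for $k=0$ reads
\beqn
\ev\Big( \ov{\mc M}{}_{0,2}(J_{\tilde B}, A) \Big) \cap \big( \{p_0\} \times \{p_\infty\} \big) = \ev\Big( {\mc M}{}_{0,2}^{\rm reg}(J_{\tilde B}, A) \Big) \cap \big( \{p_0\} \times \{p_\infty\} \big),
\eeqn
together with transversality of the right-hand intersection. First I would invoke Lemma \ref{lemma51}(2): for a \emph{general} pair $(p_0, p_\infty) \in \tilde B \times \tilde B$, the fiber $\ev^{-1}((p_0,p_\infty))$ in the full compactified moduli space $\ov{\mc M}{}_{0,2}(J_{\tilde B}, A)$ is a single point, which already lies in ${\mc M}_{0,2}^{\rm reg}(J_{\tilde B}, A)$, and the intersection there is transverse. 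Thus for such a generic choice of $p_0, p_\infty$ both sides of the displayed equation are the same single point, and there are no stable maps with nodal domain in the fiber over $(p_0,p_\infty)$ at all — the degenerate strata of $\ov{\mc M}{}_{0,2}(J_{\tilde B},A)$ simply do not meet $\ev^{-1}((p_0,p_\infty))$. This verifies both conditions of Theorem \ref{thm41} and simultaneously re-confirms that the Gromov--Witten invariant in question equals $1$, hence is an integer not divisible by $\mathrm{char}({\mb k})$ for \emph{any} field ${\mb k}$.

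With the hypotheses of Theorem \ref{thm41} verified for $B = \tilde B$ and $A$ the lifted line class, the theorem yields, for any Hamiltonian fibration $\tilde P \to \tilde B$ with fiber $(M, \omega_M)$ and any coefficient field ${\mb k}$, an isomorphism of graded ${\mb k}$-vector spaces
\beqn
H^*(\tilde P; {\mb k}) \cong H^*(\tilde B; {\mb k}) \otimes_{\mb k} H^*(M; {\mb k}),
\eeqn
which is precisely the assertion of the corollary. I expect the main obstacle — modest in this case — to be the bookkeeping needed to see that the line class $A$ on $\mb{CP}^n$ lifts to a class on $\tilde B$ whose associated moduli problem has \emph{no} contribution from the exceptional locus $Z$: this is exactly the content of ``a general line avoids $Z$'' since $Z$ has complex codimension $\geq 2$, and it is what makes the degenerate strata irrelevant over a generic pair of points. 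Everything else is a direct citation of Lemma \ref{lemma51} and Theorem \ref{thm41}.
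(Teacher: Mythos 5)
Your proposal is correct and follows essentially the same route as the paper: the paper's proof is exactly the observation that Lemma \ref{lemma51} places $\tilde B$ (with the lifted line class and $k=0$) in the special situation of Theorem \ref{thm41}, and since the invariant equals $1$ no characteristic can divide it. Your more detailed verification of hypothesis \eqref{eqn40} via Lemma \ref{lemma51}(2) is just an expanded version of the same citation.
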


\begin{proof}
It follows from Lemma \ref{lemma51} that we are  in a special situation of Theorem \ref{thm41}. As the GW invariant is 1, one concludes the cohomological splitting for any Hamiltonian fibration over $\tilde B$ in all characteristic.
\end{proof}

\begin{proof}[Proof of Theorem \ref{thm:B}]
Now we derive the cohomological splitting property of $B$ from that of $\tilde B$. Remember that there is a birational morphism $\pi_2: \tilde B \to B$. Then by \cite[Lemma 2.6]{Bai_Pomerleano_2024}, because any Hamiltonian fibration over $\tilde B$ satisfies cohomological splitting over any field because of Corollary \ref{cor52}, it also holds for $B$.
\end{proof}

\begin{rem}  
The main theorem of \cite{ghs03} asserts that for a dominant morphism of varieties $P \to B$, if both $B$ and the general fiber are rationally connected, then $P$ is also rationally connected. On the other hand, the topological arguments in \cite{Lalonde_McDuff_2003} show that for a fiber bundle $M \hookrightarrow P \to B$, if every Hamiltonian fibration over $B$ and $M$ satisfies cohomological splitting, then the same holds for $P$. This showcases further potential relation between rational connectedness and cohomological splitting.
\end{rem}

\section{Proof of Theorem \ref{thm41}}\label{section5}

Now we start to prove the main technical result of this paper. First we need to make a notational modifications related to the topological type of rational curves. The Hurewicz map 
\beqn
\pi_2(B) \to H_2(B; {\mb Z})
\eeqn
may not be injective. Therefore, {\it a priori} a Gromov--Witten invariant of $B$ with a fixed homological degree may come from different homotopy classes of holomorphic spheres. However, it is straightforward to see that for each homotopy class of holomorphic sheres, the corresponding Gromov--Witten invariant is well-defined. Therefore, the non-vanishing condition of the GW invariants in a fixed homological degree in Theorem \ref{thm:split} implies that for some homotopy class, the invariant is nonzero and not divisible by ${\rm char}({\mb k})$. Hence without loss of generality, from now on the curve class $A$ labelling Gromov--Witten invariants of the base $B$ are homotopy classes rather than homology classes.

We would like to relate Gromov--Witten invariants of the base and Gromov--Witten invariants of the total space. We first consider the correspondence of between homotopy classes of spheres. 

\begin{lemma}
The natural map $(\pi_P)_*: \pi_2( P ) \to \pi_2(B)$ is surjective.
\end{lemma}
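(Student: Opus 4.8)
The plan is to use the long exact sequence of homotopy groups associated to the Hamiltonian fibration $M \hookrightarrow P \xrightarrow{\pi_P} B$. Recall that any fiber bundle is a Serre fibration, so there is an exact sequence
\beqn
\pi_2(P) \xrightarrow{(\pi_P)_*} \pi_2(B) \xrightarrow{\partial} \pi_1(M).
\eeqn
Thus surjectivity of $(\pi_P)_*$ is equivalent to vanishing of the connecting homomorphism $\partial: \pi_2(B) \to \pi_1(M)$. The key geometric input is that for a \emph{Hamiltonian} fibration the structure group reduces to $\mathrm{Ham}(M, \omega_M)$, and $\mathrm{Ham}(M, \omega_M)$ is connected; more precisely, the classifying map $B \to B\mathrm{Ham}(M, \omega_M)$ together with the fact that $\pi_1(\mathrm{Ham}) $ maps to $\pi_1(\mathrm{Symp}_0)$, whose action on $\pi_1(M)$ is what controls $\partial$.

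The concrete route I would take: first, observe that the connecting map $\partial$ factors as $\pi_2(B) \to \pi_1(\mathrm{Ham}(M,\omega_M)) \to \pi_1(M)$, where the first map is induced by the classifying map of the fibration (a class in $\pi_2(B)$ pulls the bundle back to a bundle over $S^2$, classified by an element of $\pi_1(\mathrm{Ham})$ via the clutching construction), and the second map is evaluation at a point $\pi_1(\mathrm{Ham}(M,\omega_M)) \to \pi_1(M)$, $[\phi_t] \mapsto [\phi_t(x_0)]$. Then I would invoke the classical fact (due to the flux homomorphism / the work of Lalonde--McDuff and others) that the evaluation map $\mathrm{Ham}(M,\omega_M) \to M$ induces the \emph{zero} map on $\pi_1$: any loop of Hamiltonian diffeomorphisms based at the identity traces out a contractible loop in $M$ when evaluated at a point, because the orbit bounds — indeed the evaluation of a Hamiltonian loop at a point is contractible in $M$ since it lifts to the universal cover, or alternatively because $\pi_1(\mathrm{Ham})$ lies in the kernel of the flux and such loops act trivially. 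Hence the composite $\pi_2(B) \to \pi_1(M)$ is zero, so $\partial = 0$ and $(\pi_P)_*$ is surjective.

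The main obstacle is pinning down precisely why $\pi_1(\mathrm{Ham}(M,\omega_M)) \to \pi_1(M)$ vanishes in the generality needed (no semipositivity assumption on $M$, arbitrary compact $M$). The cleanest argument is the one sketched above: for a loop $\phi = (\phi_t)_{t \in S^1}$ of Hamiltonian diffeomorphisms with $\phi_0 = \phi_1 = \mathrm{id}$, the evaluation loop $t \mapsto \phi_t(x_0)$ is null-homotopic in $M$. This follows because $\phi_t$ is generated by a (time-dependent, periodic) Hamiltonian $H_t$, and the path $t \mapsto \phi_t(x_0)$ is the orbit of the point $x_0$; one can homotope $\phi_t$ through Hamiltonian loops fixing $x_0$ (translating the Hamiltonian by a function of $t$ vanishing at $x_0$ does not change the loop up to homotopy in $\pi_1(\mathrm{Ham})$ but changes the base orbit to a constant), using that $H^1_c(M) \to H^1(M)$ considerations are irrelevant for the based orbit. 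I would cite \cite{Lalonde_McDuff_2003} or \cite{McDuff_Salamon_1998} for the statement that Hamiltonian bundles are $\pi_1$-trivial in this sense; in fact the same references establishing that $\pi_1(B)$-action considerations enter the cohomological splitting already contain the needed input. An alternative, possibly simpler, path avoiding the evaluation-map subtlety: since $B$ is simply connected (which holds in our applications by Koll\'ar--Miyaoka--Mori, though Theorem \ref{thm41} does not assume it a priori), one could argue directly, but since the lemma is stated without the simple connectivity hypothesis, the structure-group reduction argument is the right one and the evaluation-triviality is the crux.
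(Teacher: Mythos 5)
Your argument is correct and essentially identical to the paper's: the paper likewise reduces to the clutching loop of the pullback bundle over $S^2$ and uses the long exact homotopy sequence together with the fact that the evaluation of a Hamiltonian loop at a point is null-homotopic in $M$, citing \cite[Corollary 9.1.2]{McDuff_Salamon_2004} for that key input. One caveat: your parenthetical attempt to justify the evaluation-triviality by ``translating the Hamiltonian by a function of $t$ vanishing at $x_0$'' does not work (adding a time-dependent constant to $H_t$ does not change the flow, hence cannot change the orbit), so that step should rest on the cited references, exactly as the paper does.
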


\begin{proof}
Suppose $A \in \pi_2(B)$ is represented by a smooth map $u: S^2 \to B$. The pullback fibration $u^* P \to S^2$ is then a Hamiltonian fibration. Let $\{\phi_t\}_{t\in S^1}$ be clutching function, which is a loop of Hamiltonian diffeomorphisms. Now consider the long exact sequence 
\beqn
\xymatrix{\cdots \ar[r]  & \pi_2(u^* P) \ar[r] & \pi_2(S^2) \ar[r] & \pi_1(X) \ar[r] & \cdots}
\eeqn
The map $\pi_2(S^2) \to \pi_1(X)$ sends the generator of $\pi_2(S^2)$ to the homotopy class of the loop $\phi_t(x)$ for any point $x$, which is always trivial (see \cite[Corollary 9.1.2]{McDuff_Salamon_2004}). Therefore, the map $\pi_2(u^* P) \to \pi_2(S^2)$ is surjective. It follows that the class $A$ lies in the image of $(\pi_P)_*$. 
%By \cite[Theorem 1.1]{AMS} (see also \cite{Bai_Xu_2022}), there is the splitting
%\beqn
%H_*(X; {\mb Z}) \otimes H_*(S^2) \cong H_*(u^* P; {\mb Z}). 
%\eeqn
%Then take $A$ to be the image of $1 \otimes [S^2]$ under the composition 
%\beqn
%\xymatrix{
%H_0(X; {\mb Z}) \otimes H_2(S^2) \ar[r] & H_2 (u^* P; {\mb Z}) %\ar[r]^{u_*} & H_2(P; {\mb Z})}. 
%%\eeqn
%Its image under the projection is the class $d$. 
\end{proof}

\subsection{The non-vanishing result of ${\mb k}$-valued GW in the total space}

The main object to consider is the ${\mb k}$-valued Gromov--Witten invariants of the total space $P$ of the fibration. To start, one needs to choose a symplectic form on $P$. As $P \to B$ is a Hamiltonian fibration, there exists a Hamiltonian connection and hence a coupling form (see \cite[Section 6]{McDuff_Salamon_1998}) $\Omega_P^0 \in \Omega^2(P)$ which is a closed extension of the fiberwise symplectic form. Let $\omega_B$ be the symplectic form on $B$. For sufficiently large $\kappa>0$, the 2-form  
\beqn
\Omega_P^\kappa:= \Omega_P^0 + \kappa \pi^* \omega_B \in \Omega^2(P)
\eeqn
is a symplectic form whose deformation class is independent of $\kappa$ and the Hamiltonian connection.

Let $\GW_{0, k}^{P, {\mb k}}$ denote the ${\mb k}$-valued $k$-pointed genus zero GW invariants of the space $(P, \Omega_P^\kappa)$. More precisely, for $\alpha_1, \ldots, \alpha_k \in H^*(P; {\mb k})$, 
\beqn
\GW^{P, {\mb k}}_{0, k}(\alpha_1, \ldots, \alpha_k)  = \sum_{\tilde A \in \pi_2(P)} q^{\Omega_P^\kappa(\tilde A)} \GW^{P, {\mb k}; \tilde A}_{0, k}(\alpha_1, \ldots, \alpha_k).
\eeqn
This invariant depends on the real number $\kappa$. However, if we fix a homotopy class 
\beqn
A \in \pi_2(B)
\eeqn
of the base, then we can consider the contribution 
\beqn
\sum_{\pi_*(\tilde A) = A} q^{\Omega_P^\kappa(\tilde A)} \GW_{0, k}^{P, {\mb k}; \tilde A}(\alpha_1, \ldots, \alpha_k) = q^{\kappa \Omega_B(A)} \sum_{\pi_*(\tilde A) = A} q^{\Omega_P^0(A)} \GW_{0, k}^{P, {\mb k}; \tilde A}(\alpha_1, \ldots, \alpha_k). 
\eeqn
It is $q^{\kappa \Omega_B(A)}$ times of a $\kappa$-independent term.

\begin{thm}\label{thm_nonvanishing}
Under the same setting as in Theorem \ref{thm41}, suppose ${\rm char}({\mb k}) = p$ and 
\beqn
m = \GW_{0, k+2}^{B, A} ( [{\rm pt}], [{\rm pt}], \alpha_1, \ldots, \alpha_k) \notin p {\mb Z}.
\eeqn
Let 
\beqn
(\pi_P^* \alpha_i)_{\mb k} \in H^*(P; {\mb k})
\eeqn
is the image of $\pi_P^* \alpha_i$ under the natural map 
\beqn
H^*(P; {\mb Z}) \to H^*(P; {\mb k}).
\eeqn
Then for any $\beta_0 \in H_*(M; {\mb k} )$, there exists $\beta_\infty \in H_*(M; {\mb k})$ such that 
\beq\label{eqn:GW-nonzero}
\sum_{\pi_*(\tilde A) = A} q^{\Omega_P^0(\tilde A)} \GW_{0, k+2}^{P, {\mb k}; \tilde A}  \Big( {\rm PD}_P (\iota_*(\beta_0)), {\rm PD}_P (\iota_*(\beta_\infty)), (\pi^* \alpha_1)_{\mb k}, \ldots, (\pi^* \alpha_k)_{\mb k} \Big) \neq 0.
\eeq
\end{thm}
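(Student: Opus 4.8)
The plan is to realize the left-hand side of \eqref{eqn:GW-nonzero} as a fiber-product count that factors through the regular locus of the moduli space of rational curves downstairs, and then to identify the fiber contribution with the nondegenerate Seidel pairing from Corollary \ref{cor_nondegenerate}. First I would choose the compatible almost complex structure $J_P$ on $(P,\Omega_P^\kappa)$ to make $\pi_P\colon (P,J_P)\to (B,J_B)$ pseudo-holomorphic, with $J_B$ and the pseudocycle representatives $f_i\colon W_i\to B$ as in the hypotheses of Theorem \ref{thm41}. For a homotopy class $\tilde A\in\pi_2(P)$ with $(\pi_P)_*\tilde A=A$, composition with $\pi_P$ gives a forgetful map $\ov{\mc M}{}_{0,k+2}(J_P,\tilde A)\to\ov{\mc M}{}_{0,k+2}(J_B,A)$, and I would use the splitting property \eqref{eqn40}: after imposing the incidence conditions $\{p_0\}\times\{p_\infty\}\times\prod f_i(W_i)$, only the transversely cut-out simple curves in ${\mc M}{}_{0,k+2}^{\rm reg}(J_B,A)$ survive. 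By genericity of $p_0,p_\infty$ and the $f_i$ (and the integrality statement in Theorem \ref{thm41}), the fiber of the downstairs evaluation over a generic such configuration is a finite set of $m$ embedded rational curves $C_1,\dots,C_r$ in $B$ (counted with sign, total $m$), each a smoothly embedded $S^2$, passing through $p_0,p_\infty$ and meeting the $f_i(W_i)$.

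Next I would construct a global Kuranishi chart / derived orbifold chart for $\ov{\mc M}{}_{0,k+2}(J_P,\tilde A)$ along the lines of Section \ref{section3} and \cite{AMS,AMS2}, but engineered so that the obstruction spaces take values in the vertical tangent bundle $T^{\rm vert}P$ over the preimage of ${\mc M}{}_{0,k+2}^{\rm reg}(J_B,A)$; this uses the notion of singular global Kuranishi chart from Definition \ref{defn_Kchart} (regular over the good locus, then stabilized by adding further obstructions to an honest chart). After stable smoothing and passing to a stably complex derived orbifold chart, I would choose an FOP transverse perturbation whose evaluation is transverse to the pseudocycles ${\rm PD}_P(\iota_*\beta_0)$, ${\rm PD}_P(\iota_*\beta_\infty)$, and $(\pi_P^*\alpha_i)_{\mb k}=\pi_P^*(\mathrm{PD}_B(f_i))$. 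Because the incidence conditions pulled back from $B$ localize the count, for each surviving curve $C_j\subset B$ the restriction $P|_{C_j}\to C_j\cong S^2$ is a Hamiltonian fibration over $S^2$, and the local contribution is exactly a graph Gromov--Witten count for $P|_{C_j}$ with the two point-constraints $\beta_0,\beta_\infty$ inserted at $z_0,z_\infty$ and the extra marked points ${\bf z}$ corresponding to the (now rigid) incidences with the $f_i(W_i)$. A gluing/splitting argument of the type used for the trivial-loop case — completely analogous to the displayed formula in the commented-out Proposition 5.2 — would give
\begin{multline*}
\sum_{(\pi_P)_*\tilde A = A} q^{\Omega_P^0(\tilde A)}\,\GW_{0,k+2}^{P,{\mb k};\tilde A}\Big({\rm PD}_P(\iota_*\beta_0),{\rm PD}_P(\iota_*\beta_\infty),(\pi^*\alpha_1)_{\mb k},\dots,(\pi^*\alpha_k)_{\mb k}\Big)\\
= \pm\, m \cdot \sum_{\tilde A'\in\pi_2^{\rm graph}(\tilde M_{C})} q^{\tilde\Omega_\phi(\tilde A')}\,\GGW_{0,2,{\bf z}}^{\tilde M_C,\tilde A'}\big({\rm PD}_M(\beta_0),{\rm PD}_M(\beta_\infty)\big),
\end{multline*}
where $\tilde M_C=P|_C$ for a representative curve $C$ (all embedded representatives give homotopic Hamiltonian fibrations over $S^2$, hence the same graph invariants).

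To finish: by Corollary \ref{cor_nondegenerate} the pairing $(\beta_0,\beta_\infty)\mapsto\langle\beta_0,\beta_\infty\rangle_{k,\phi}$ appearing on the right is nondegenerate over the Novikov field $\Lambda$, so given $\beta_0\neq 0$ there is some $\beta_\infty\in H_*(M;{\mb k})$ making it nonzero; since $m\notin p{\mb Z}$, multiplication by $m$ is invertible in ${\mb k}$, so the whole expression is nonzero, which is \eqref{eqn:GW-nonzero}. The main obstacle I expect is the gluing/splitting step identifying the total-space invariant with $m$ times the fiberwise graph invariant: one must show that the FOP perturbation on the global chart for $\ov{\mc M}{}_{0,k+2}(J_P,\tilde A)$, after imposing the base incidence constraints, is compatible with (or can be chosen to restrict to) an FOP perturbation of the graph moduli chart $\tilde{\mc C}$ over each rigid curve $C_j$, and that no spurious contributions arise from configurations whose domain components map into fibers over non-regular points of $\ov{\mc M}{}_{0,k+2}(J_B,A)$ — this is precisely where the transversality package of Theorem \ref{thm_FOP_property} (CUDV and the stabilization property) and the dimension estimate on the boundary of the FOP zero locus must be invoked carefully, together with the hypothesis \eqref{eqn40} ruling out the non-regular locus after constraining.
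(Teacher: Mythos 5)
Your proposal follows essentially the same route as the paper: reduce to the identity expressing the constrained total-space count as the signed sum over the $\tilde m$ rigid curves downstairs of graph Gromov--Witten invariants of the pulled-back fibrations (the paper's Proposition \ref{prop44}), prove it by building a global Kuranishi chart with vertical obstruction spaces that is regular over the preimage of ${\mc M}{}_{0,k+2}^{\rm reg}(J_B,A)$ and choosing the FOP perturbation fiberwise-first so the localized contributions are exactly the $\GGW{}_{0,2,{\bf z}_l}$ invariants, and then conclude from Corollary \ref{cor_nondegenerate} together with the invertibility of $m$ in ${\mb k}$. The one cosmetic point is that the regular curves $u_l$ need only be simple, not embedded, so you should work with the pullback fibrations $u_l^*P$ rather than restrictions $P|_{C_j}$, exactly as the paper does.
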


The finite characteristic case of Theorem \ref{thm41} follows immediately; the characteristic zero case follows by considering a prime $p$ such that the homology groups do not have $p$-torsions.

\subsection{Proof of Theorem \ref{thm_nonvanishing}}

To prove Theorem \ref{thm_nonvanishing}, we translate it into a more specific statement relating the GW invariants of $P$ with the graph GW invariants of certain Hamiltonian fibrations over $S^2$. 
By the assumption of Theorem \ref{thm41}, one can assume that the intersection \eqref{eqn40} consists of $\tilde m$ points $q_1, \ldots, q_{\tilde m} \in {\mc M}{}_{0, k+2}^{\rm reg}(J_B, A)$ represented by marked smooth spheres
\beqn
(u_l, {\bf z}_l = (z_{l, 0}, z_{l, 1}, \ldots, z_{l, k}, z_{l, \infty})),\ l = 1, \ldots, \tilde m
\eeqn
with signs ${\rm sign}(u_l,{\bf z}_l) \in \{\pm 1\}$. The total sum of these signed counts is the Gromov--Witten invariant $\GW_{0, k+2}^{B, A}([{\rm pt}], [{\rm pt}], a_1, \ldots, a_k)$. Each $u_l$ pulls back a Hamiltonian fibration 
\beqn
u_l^* P \to S^2,\ l = 1, \ldots, \tilde m.
\eeqn
Then there are natural maps
\beqn
\pi_2^{\rm graph}(u_l^* P) \to (\pi_{P/B})_*^{-1}(A) \subset \pi_2(P)
\eeqn
(which may not be injective). Now consider arbitrary $\beta_0, \beta_\infty \in H_*(M; {\mb k})$. Denote
\beqn
\beta_0^P = \iota_* (\beta_0), \beta_\infty^P  = \iota_*(\beta_\infty) \in H_*(P; {\mb k})
\eeqn
be the pushforward induced by the inclusion of a fiber.

\begin{prop}\label{prop44}
One has
\begin{multline}\label{eqn41}
\sum_{\pi_*(\tilde A) = A} q^{\Omega_P^\kappa (\tilde A)} \GW_{0, k+2}^{P, {\mb k}; \tilde A} \Big( {\rm PD}_P( \beta_0^P), {\rm PD}_P(\beta_\infty^P), (\pi_{P/B}^* \alpha_1)_{\mb k}, \ldots, (\pi_{P/B}^* \alpha_k)_{\mb k} \Big) \\
= q^{\kappa \Omega_B(A)} \sum_{l=1}^{\tilde m} {\rm sign}(u_l, {\bf z}_l) \sum_{\tilde A' \in \pi_2^{\rm graph} (u_l^*P)} q^{\tilde \Omega_{u_l^*P }(\tilde A')} \GGW_{0, 2, {\bf z}_l}^{u_l^* P, \tilde A'} (\beta_0, \beta_\infty) \in \Lambda.
\end{multline}
\end{prop}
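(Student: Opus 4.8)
\emph{Proof plan.} The plan is to compute the left-hand side of \eqref{eqn41} directly from the definition of the $\mathbb{k}$-valued Gromov--Witten invariants of $(P,\Omega_P^\kappa)$, using a derived orbifold chart for $\overline{\mathcal M}_{0,k+2}(J_P,\tilde A)$ adapted to the fibration $\pi_P\colon P\to B$, and then to recognize the cut-down moduli problem, once it is fibered over the base moduli space, as a disjoint union of the graph moduli spaces of Section~\ref{section3}. First I would fix the symplectic form $\Omega_P^\kappa$ and choose an $\Omega_P^\kappa$-compatible almost complex structure $J_P$ for which $\pi_P\colon(P,J_P)\to(B,J_B)$ is pseudo-holomorphic, $T^{\mathrm{vert}}P$ is $J_P$-invariant, and the fiberwise restriction of $J_P$ is tamed by the fiberwise form; here $J_B$ is the almost complex structure supplied by Theorem~\ref{thm41}. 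Any $J_P$-holomorphic stable map in $P$ then projects, after stabilization, to a $J_B$-holomorphic stable map in $B$; since $\tilde A$ projects to $A$ the principal component is never contracted, and the components contracted by $\pi_P$ are precisely fiber bubbles. This produces a continuous forgetful map $\overline{\mathcal M}_{0,k+2}(J_P,\tilde A)\to\overline{\mathcal M}_{0,k+2}(J_B,A)$.

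Next I would build an AMS-style \emph{singular} global Kuranishi chart (Definition~\ref{defn_Kchart}) for $\overline{\mathcal M}_{0,k+2}(J_P,\tilde A)$, following the recipe recalled in Section~\ref{section3} and \cite{AMS,AMS2}, whose finite-dimensional obstruction spaces take values in $\Omega^{0,1}\otimes T^{\mathrm{vert}}P$. Using hypothesis~(1) of Theorem~\ref{thm41}, so that the base curve is already regular, such a chart can be arranged to be regular, smooth and almost complex over the open locus lying above $\mathcal M^{\mathrm{reg}}_{0,k+2}(J_B,A)$, where only the vertical cokernel must be eliminated. Adding further obstruction spaces, stabilizing, and smoothing as in Lemma~\ref{lemma_smoothing} yields a stably complex D-chart lift $(\mathcal U,\mathcal E,\mathcal S,\widetilde{\ev})$ of $(\overline{\mathcal M}_{0,k+2}(J_P,\tilde A),\ev)$ with $\widetilde{\ev}\colon\mathcal U\to P^{k+2}$ a submersion, which by construction computes $\GW^{P,\mathbb k;\tilde A}_{0,k+2}$. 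I would then represent $\mathrm{PD}_P(\beta_0^P)$ and $\mathrm{PD}_P(\beta_\infty^P)$ by pseudocycles supported in the fibers $P_{p_0}$ and $P_{p_\infty}$ (via the identifications $P_{p_0}\cong M\cong P_{p_\infty}$), and represent $(\pi_{P/B}^*\alpha_i)_{\mathbb k}$ by a small transverse perturbation of $\pi_P^{-1}(f_i(W_i))$, where $f_i\colon W_i\to B$ are the pseudocycles from Theorem~\ref{thm41}. After choosing an FOP transverse perturbation $\mathcal S'$ of $\mathcal S$ (Theorem~\ref{thm_FOP_property}), the correlator $\GW^{P,\mathbb k;\tilde A}_{0,k+2}(\cdots)$ is the signed $\mathbb k$-count of the transverse intersection of $\widetilde{\ev}\big((\mathcal S')^{-1}(0)_{\mathrm{free}}\big)$ with the product of these pseudocycles; projecting by $\pi_P^{k+2}$ and invoking~\eqref{eqn40}, every intersection point lies over one of the $\tilde m$ points $q_l\in\mathcal M^{\mathrm{reg}}_{0,k+2}(J_B,A)$, hence over the regular locus, where the chart is an honest smooth almost complex global Kuranishi chart.

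Over a neighbourhood of $q_l$ the relative moduli problem is, by definition of the pullback, that of $\tilde J_{\phi_l}$-holomorphic sections (with fiber bubbles) of $u_l^*P\to S^2$ with the $k$ middle marked points fixed at ${\bf z}_l$, i.e.\ the graph moduli space $\overline{\mathcal M}^{\mathrm{graph}}_{0,2,{\bf z}_l}(\tilde J_{\phi_l},\tilde A')$ for section classes $\tilde A'$ over $\tilde A$. The restriction of the D-chart above to this locus should be equivalent, as a D-chart lift, to the one produced in Proposition~\ref{prop:d-chart} for the graph moduli, since both are obtained from the same AMS construction applied to the vertical Cauchy--Riemann operator and their equivalence classes are independent of the auxiliary choices by Section~\ref{subsubsec:independence}; hence $\mathcal S'$ restricts there to a valid FOP transverse perturbation of the graph D-chart. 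Under $P_{p_0}\cong M$ and $P_{p_\infty}\cong M$ the two fiber constraints become precisely the evaluation constraints defining $\GGW^{u_l^*P,\tilde A'}_{0,2,{\bf z}_l}(\beta_0,\beta_\infty)$, while the transverse constraint at $q_l$ contributes the sign $\mathrm{sign}(u_l,{\bf z}_l)$ of the intersection in~\eqref{eqn40}, by an oriented fiber-product argument. Finally $\Omega_P^\kappa(\tilde A)=\kappa\,\Omega_B(A)+\Omega_P^0(\tilde A)$ and, by naturality of coupling forms (\cite[Section~6]{McDuff_Salamon_1998}), $\Omega_P^0(\tilde A)=\tilde\Omega_{u_l^*P}(\tilde A')$; summing over $l$ and over $\tilde A'\in\pi_2^{\mathrm{graph}}(u_l^*P)$ then gives~\eqref{eqn41}.

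The step I expect to be the main obstacle is the compatibility used in the previous paragraph: one must produce a single (singular) global Kuranishi chart on the total-space moduli space that is simultaneously an honest smooth almost complex chart over the regular locus of the base moduli \emph{and} restricts there, near the cut-down locus, to the graph moduli chart of Proposition~\ref{prop:d-chart}, so that one FOP perturbation computes both sides and the contributions agree term by term, with the correct signs, orientations, and Novikov weights. Matching the stable smoothings and the FOP transverse perturbations across these two AMS constructions at the level of actual counts, rather than merely up to cobordism, is the technical heart of the argument, and is where the singular-global-Kuranishi-chart formalism (Definition~\ref{defn_Kchart}) and the doubly framed curve comparisons of \cite{AMS,AMS2} are brought to bear. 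The remaining ingredients --- the projection properties of $J_P$-holomorphic curves, the localization of the intersection via~\eqref{eqn40}, and the bookkeeping of signs and Novikov exponents --- are routine.
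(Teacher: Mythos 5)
Your overall architecture is the same as the paper's: choose $J_P$ fibered over $J_B$, use vertical obstruction spaces so that the chart is regular over the locus above ${\mc M}{}_{0,k+2}^{\rm reg}(J_B,A)$, localize the cut-down count over the $\tilde m$ points $q_l$ via \eqref{eqn40}, identify the fiberwise problem with the graph moduli of Section \ref{section3}, and finish with a sign and Novikov-weight comparison. However, the step you single out as the main obstacle is handled in the wrong direction, and as stated it would fail. You propose to choose one FOP transverse perturbation ${\mc S}'$ on the ambient chart and then claim that ``${\mc S}'$ restricts there to a valid FOP transverse perturbation of the graph D-chart.'' FOP transversality is not inherited by restriction to a sub-orbifold: the restriction of an FOP transverse section of the total-space chart to the fiber ${\mc U}^{\rm reg}_{{\rm vert},q_l}$ over $q_l$ need not be FOP transverse, nor even transverse, so its zero count has no reason to equal $\GGW_{0,2,{\bf z}_l}^{u_l^*P,\tilde A'}(\beta_0,\beta_\infty)$. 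Moreover, even granting transversality of the restriction, the zeros of ${\mc S}'$ lying over a small neighborhood of $q_l$ need not lie \emph{inside} the fiber chart, so the local contribution need not coincide with the fiberwise count.

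The paper resolves this by building the perturbation from the inside out, which is exactly what the extension properties of Theorem \ref{thm_FOP_property} are for. One first fixes FOP transverse perturbations ${\mc S}'_{{\rm vert},q_l}$ on the graph charts ${\mc U}^{\rm reg}_{{\rm vert},q_l}$ (so the fiberwise counts are, by definition, the graph GW invariants); one then extends them to an FOP transverse section on all of ${\mc U}^{\rm reg}_{\rm vert}$ using item (6) of Theorem \ref{thm_FOP_property}, which applies because the normal bundle of the fiber is pulled back from $T_{q_l}{\mc M}{}_{0,k+2}^{\rm reg}(J_B,A)$ --- a manifold, hence a bundle with trivial isotropy representations; one then extends over a disk bundle in the horizontal obstruction bundle ${\mc W}_{\rm hor}$ (provided by Proposition \ref{prop_chart_detail}) as the \emph{stabilization by the tautological section}, using item (5), which guarantees that no new zeros are created; and only at the last step does one extend to the whole chart via the CUDV property (4), with $C^0$-smallness so that \eqref{eqn40} forces every constrained zero into the region where the perturbation is exactly this stabilization. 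Without the intermediate singular chart $K_{\rm vert}$, the horizontal bundle $W_{\rm hor}$ with its tautological-section normal form, and this order of construction, the term-by-term identification of the two sides of \eqref{eqn41} is not justified; invoking the doubly framed curve comparisons only gives agreement of the charts up to equivalence/cobordism, which matches invariants but does not by itself let a single globally chosen perturbation compute both sides at the level of actual intersection points.
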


As all $u_l$ are in the same homotopy class, the Hamiltonian fibrations $u_l^* P \to S^2$ can be identified with $\tilde M_\phi\to S^2$ for a fixed $\phi: S^1 \to {\rm Ham}(M, \omega_M)$ and all coupling forms $\tilde \Omega_{u_l^* P}$ are in the same homology class of a fixed coupling form $\tilde \Omega_\phi \in \Omega^2(\tilde M_\phi)$. Therefore the right hand side of \eqref{eqn41} is equal to 
\beqn
m q^{\kappa \Omega_B(A)} \sum_{\tilde A' \in \pi_2^{\rm graph}(\tilde M_\phi)} q^{\tilde\Omega_\phi(\tilde A' )} \wt\GW{}_{0, 2, {\bf z}}^{\tilde M_\phi, \tilde A' }(\beta_0, \beta_\infty)
\eeqn
for some fixed marking ${\bf z}$. As $m$ is not divisible by $p$, hence invertible in ${\mb k}$, Theorem \ref{thm_nonvanishing} now follows from Proposition \ref{prop44} and Corollary \ref{cor_nondegenerate}.  \qed

\subsection{Proof of Proposition \ref{prop44}}

The proof of Proposition \ref{prop44} is carried out by carefully constructing a global Kuranishi chart on the moduli space of stable maps into the total space and choosing a good FOP transverse perturbation. 

Let $J_B$ be the almost complex structure given in the assumption of Theorem \ref{thm41}. We first choose an almost complex structure $J_P$ on the total space $P$ which is tamed by $\Omega_P^\kappa$ and such that the projection $(P, J_P) \to (B, J_B)$ is pseudo-holomorphic. For each class $\tilde A \in \pi_2(P)$ such that $\pi_*(\tilde A) = A$ consider the moduli space of stable maps 
\beqn
\ov{\mc M}{}_{0, k+2}(J_P, \tilde A).
\eeqn
Then there is a natural projection map 
\beqn
\pi_{P/B}: \ov{\mc M}{}_{0, k+2}(J_P, \tilde A) \to \ov{\mc M}{}_{0, k+2}(J_B, A).
\eeqn
Denote
\beqn
\ov{\mc M}{}_{0, k+2}^{\rm sing}(J_B, A):= \ov{\mc M}{}_{0,k+2}(J_B, A) \setminus {\mc M}{}_{0, k+2}^{\rm reg}(J_B, A)
\eeqn
where we switch the notation ${\mc M}{}_{0, k+2}^{\rm reg}(J_B, A) = {\mc M}{}_{0, k+2}^{\rm simple}(J_B, A)$, and 
\begin{align*}
&\ \ov{\mc M}{}_{0, k+2}^{\rm reg} (J_P, \tilde A) = (\pi_{P/B})^{-1} ({\mc M}{}_{0, k+2}^{\rm reg}(J_B, A)),\ &\ \ov{\mc M}{}_{0, k+2}^{\rm sing}(J_P, \tilde A) = (\pi_{P/B})^{-1} ( \ov{\mc M}{}_{0, k+2}^{\rm sing}(J_B, A) ).
\end{align*}
For the $\tilde m$ specific points $q_l = [u_l, {\bf z}_l] \in {\mc M}{}_{0, k+2}^{\rm reg}(J_B, A)$, their preimages
\beqn
\pi_{P/B}^{-1}(q_l) \subset \ov{\mc M}{}_{0, k+2}(J_P, A)
\eeqn
is naturally identified with the (disjoint) union of 
\beqn
\ov{\mc M}{}_{0, 2, {\bf z}_l}^{\rm graph}( J_P|_{u_l^* P}, \tilde A')
\eeqn
with $\tilde A' \in \pi_2^{\rm graph}(u_l^* P)$ sent to $\tilde A \in \pi_2(P)$. By abuse of notation, denote by 
\beqn
\ov{\mc M}{}_{0, 2, {\bf z}_l}^{\rm graph}(J_P|_{u_l^* P}, \tilde A)
\eeqn
this disjoint union.

\begin{prop}\label{prop_chart_detail}
There exist the following objects.
\begin{enumerate}
\item A smooth almost complex global Kuranishi chart
\beqn
K = (G, V, E, S, \Psi)
\eeqn
for $\ov{\mc M}{}_{0, k+2}(J_P, \tilde A)$. Denote the induced derived orbifold chart by 
\beqn
{\mc C} = ({\mc U}, {\mc E}, {\mc S}, \psi),
\eeqn
then it is equivalent to (see Definition \ref{defn_chart_equivalence}) the derived orbifold chart induced from an AMS global Kuranishi chart.

\item A singular global Kuranishi chart
\beqn
K_{\rm vert} = (G, V_{\rm vert}, E_{\rm vert}, S_{\rm vert}, \Psi_{\rm vert})
\eeqn
which restricts to an almost complex smooth global Kuranishi chart for the open subset $\ov{\mc M}{}_{0,k+2}^{\rm reg}(J_P, A) \subset \ov{\mc M}{}_{0, k+2}(J_P, A)$ %which is regular,smooth, and almost complex over an open neighborhood $V_{\rm vert}^{\rm reg}$ of $\Psi_{\rm vert}^{-1} (\ov{\mc M}{}_{0, k+2}^{\rm reg} (J_P, \tilde A))$ 
with a $G$-equivariant commutative diagram
\beq\label{commutative}
\vcenter{ \xymatrix{      E_{\rm vert} \ar[rr]^{\wh\iota} \ar[d]  & &   E \ar[d]\\
                 V_{\rm vert} \ar[rr]_{\iota}  \ar@/^2.0pc/@[][u]^{S_{\rm vert}} & &  V \ar@/_2.0pc/@[][u]_{S}  } }
\eeq
where $\iota$ is a $G$-equivariant continuous embedding covered by the bundle homomorphism $\wh\iota$, which restricts to a $G$-equivariant embedding over $V_{\rm vert}^{\rm reg}$ compatible with the almost complex structures.

\item A $G$-equivariant complex vector bundle $\pi_{\rm hor}: W_{\rm hor} \to V_{\rm vert}^{\rm reg}$, a $G$-invariant neighborhood $W_{\rm hor}^{\rm reg} \subset W_{\rm hor}$ of the zero section, and a $G$-equivariant commutative diagram
\beq\label{eqn:commutative-2}
\xymatrix{ \pi_{\rm hor}^* E_{\rm vert} \oplus \pi_{\rm hor}^* W_{\rm hor} \ar[rr]^-{\wh\theta_{\rm hor}} \ar[d]  &  & E \ar[d] \\
W_{\rm hor}^{\rm reg} \ar[rr]_-{\theta_{\rm hor}}   &  & V }
\eeq
where $\theta_{\rm hor}$ is a homeomorphism onto an open subset extending the embedding of $V_{\rm vert}^{\rm reg}$ into $V$ and $\wh\theta_{\rm hor}$ is a bundle isomorphism to $\theta_{\rm hor}^* E$. Moreover, if we write the $W_{\rm hor}$-component of $S$ restricted along the image of $\theta_{\rm hor}$ as $S_{\rm hor}$ and write the tautological section of $\pi_{\rm hor}^* W_{\rm hor} \to W_{\rm hor}^{\rm reg}$ as $\tau_{W_{\rm hor}}$,  then 
\beqn
\wh\theta_{\rm hor} \circ \tau_{W_{\rm hor}}  =  S_{\rm hor} \circ \theta_{\rm hor}.
\eeqn

\item A $G$-invariant smooth submersive map
\beq\label{projection1}
\tilde \pi_{\rm vert}: V_{\rm vert}^{\rm reg} \to {\mc M}{}_{0,k+2}^{\rm reg}(J_B, A)
\eeq
which makes the following diagram commute
\beqn
\xymatrix{   {\mc S}_{\rm vert}^{-1}(0)\cap V_{\rm vert}^{\rm reg}  \ar[r] \ar[d]   &  {\mc S}_{\rm vert}^{-1}(0)/G   \ar[d] \\
     V_{\rm vert}^{\rm reg} \ar[d]   &  \ov{\mc M}{}_{0, k+2}(J_P, \tilde A)  \ar[d] \\
                           {\mc M}{}_{0, k+2}^{\rm reg}(J_B, A)  \ar[r]   &  \ov{\mc M}{}_{0, k+2}(J_B, A). } 
\eeqn
Furthermore, for each point $q_l = [u_l, {\bf z}_l] \in {\mc M}{}_{0, k+2}^{\rm reg}(J_B, A)$, denote the fiber of $V^{\rm reg}_{\rm vert}$ by $V_{{\rm vert}, q_l}^{\rm reg}$, then the restriction of $K_{\rm vert}$ to $V_{{\rm vert}, q_l}^{\rm reg}$ is equivalent to an AMS global Kuranishi chart on the graph moduli space $\ov{\mc M}{}_{0, 2, {\bf z}_l}^{\rm graph}(\tilde J_P|_{u_l^* P}, \tilde A)$ constructed in Section \ref{section3}.
\end{enumerate}
\end{prop}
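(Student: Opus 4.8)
The plan is to construct $K$ and $K_{\rm vert}$ by running the Abouzaid--McLean--Smith scheme of Section \ref{section3} with target $(P,\Omega_P^\kappa)$ in place of $(\tilde M_\phi,\tilde\Omega_\phi)$, but organising every auxiliary choice around the splitting $TP=T^{\rm vert}P\oplus\pi_P^*TB$ furnished by the Hamiltonian connection underlying $\Omega_P^0$. For a map $u\colon C\to P$ with $u_B:=\pi_P\circ u$ one has $\bar\partial_{J_P}u=(\bar\partial_{J_P}u)^{\rm vert}\oplus\bar\partial_{J_B}u_B$, so the finite-dimensional approximation scheme on the universal domain is taken of the form $(W_{\rm vert},\iota_{\rm vert})\oplus(W_{\rm hor},\iota_{\rm hor})$, with $\iota_{\rm vert}$ valued in compactly supported $(0,1)$-forms with coefficients in $T^{\rm vert}P$ and $\iota_{\rm hor}$ pulled back from a finite-dimensional approximation scheme for the base moduli $\ov{\mc M}{}_{0,k+2}(J_B,A)$ (hence valued in $(0,1)$-forms with coefficients in $\pi_P^*TB$); both $W_{\rm vert},W_{\rm hor}$ are complex $G$-representations. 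First I would fix $J_P$ taming $\Omega_P^\kappa$ with $\pi_P$ pseudo-holomorphic and fiberwise tame, and a rational, then integral, approximation $\Omega_P$ of $[\Omega_P^\kappa]$ used, exactly as in Section \ref{section3}, to define the framing line bundles $L_u\to C$ of total degree $d=\langle\Omega_P,\tilde A\rangle$, and with them the group $G$ and the reference space of framed domains.

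Next I would invoke the relative version of the surjectivity input \cite[Lemma 4.2]{AMS2} to choose $(W_{\rm vert},\iota_{\rm vert})$ so that $\iota_{\rm vert}$ surjects onto every vertical cokernel $\mathrm{coker}\,D_u^{\rm vert}$, uniformly over the compact moduli space and with supports disjoint from nodes and marked points, and $(W_{\rm hor},\iota_{\rm hor})$ so that $\iota_{\rm hor}$ surjects onto every base cokernel $\mathrm{coker}\,D_{u_B}$. Forcing $\iota_{\rm hor}=0$ — i.e. requiring $u_B$ to be honestly $J_B$-holomorphic, so $u$ is a vertically perturbed holomorphic section of $u_B^*P$ — and adding framings of $L_u$ yields the singular global Kuranishi chart $K_{\rm vert}=(G,V_{\rm vert},E_{\rm vert},S_{\rm vert},\Psi_{\rm vert})$ with $E_{\rm vert}=\underline{W_{\rm vert}}\oplus\pi^*T\tilde B_d\oplus\mf g$; allowing $\iota_{\rm hor}\neq 0$ yields the honest AMS-type chart $K=(G,V,E,S,\Psi)$ with $E=\underline{W_{\rm vert}}\oplus\underline{W_{\rm hor}}\oplus\pi^*T\tilde B_d\oplus\mf g$, and the inclusion of the locus $\{e_{\rm hor}=0\}$ together with $E_{\rm vert}\hookrightarrow E$ gives the $G$-equivariant square \eqref{commutative} with $\wh\iota\circ S_{\rm vert}=S\circ\iota$. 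Since the vertical obstruction sections leave $u_B=\pi_P\circ u$ unchanged, over ${\mc M}{}_{0,k+2}^{\rm reg}(J_B,A)$ — where $D_{u_B}$ is already surjective by hypothesis — the subset $V_{\rm vert}^{\rm reg}:=\pi_{P/B}^{-1}({\mc M}{}_{0,k+2}^{\rm reg}(J_B,A))$ is a topological $G$-manifold fibering over the smooth manifold ${\mc M}{}_{0,k+2}^{\rm reg}(J_B,A)$, with fiber over $q_l=[u_l,{\bf z}_l]$ a framed vertical thickening of $\ov{\mc M}{}_{0,2,{\bf z}_l}^{\rm graph}(\tilde J_P|_{u_l^*P},\tilde A)$, i.e. an AMS chart of Section \ref{section3}; the forgetful map (drop $u$ keeping $u_B$, drop $e_{\rm vert}$ and the framing) is the $G$-invariant submersion \eqref{projection1} and fits into the commutative diagram of item (4) by construction of $\Psi_{\rm vert}$. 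Elsewhere $V_{\rm vert}$ is genuinely singular, the unperturbed base cokernel obstructing transversality.

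For item (3), over ${\mc M}{}_{0,k+2}^{\rm reg}(J_B,A)$ the equation $\bar\partial_{J_B}u_B+\iota_{\rm hor}(e_{\rm hor})(u_B)=0$ has, for small $e_{\rm hor}$, a unique solution $u_B=u_B(e_{\rm hor})$ near the honest one by the implicit function theorem (surjectivity of $D_{u_B}$), so a neighborhood of $V_{\rm vert}^{\rm reg}$ in $V$ is identified with a neighborhood $W_{\rm hor}^{\rm reg}$ of the zero section of the complex $G$-bundle $\underline{W_{\rm hor}}\to V_{\rm vert}^{\rm reg}$; since the $W_{\rm hor}$-component of the AMS section merely records $e_{\rm hor}$, this identification intertwines it with $\tau_{W_{\rm hor}}$, which is \eqref{eqn:commutative-2}. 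The smoothing then proceeds as in \cite[Section 4]{AMS}: $V$ is a topological $G$-manifold with a $G$-equivariant vector bundle lift of its tangent microbundle, Lashof's theorem \cite{Lashof_1979} supplies a stable $G$-smoothing, and the relative ($C^1_{loc}$-bundle) arguments of \cite[Sections 4.5, 6]{AMS} let one choose it so that \eqref{projection1} becomes a smooth submersion, \eqref{eqn:commutative-2} a smooth complex stabilization, $K_{\rm vert}|_{V_{\rm vert}^{\rm reg}}$ smooth and almost complex, and $TV\oplus\mf g$ and $E$ complex. This last point, together with the doubly-framed-curve independence of Section \ref{subsubsec:independence}, identifies $\mc C=(V/G,E/G,S/G,\psi)$ with the derived orbifold chart of a plain AMS chart for $(\ov{\mc M}{}_{0,k+2}(J_P,\tilde A),\Omega_P^\kappa)$, which is item (1).

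The main obstacle is the relative surjectivity statement for the vertical scheme together with its clean interaction with the horizontal one. One must choose $W_{\rm vert}$ so that $\iota_{\rm vert}$ kills $\mathrm{coker}\,D_u^{\rm vert}$ for \emph{all} stable maps $u$ — including those over degenerate base curves and those with fiber bubbles — while remaining $G$-equivariant and supported on the varying smooth domain loci, and then verify that switching off $\iota_{\rm hor}$ produces a space that is a topological manifold \emph{precisely} over the regular base locus and genuinely singular elsewhere, with the two thickenings glued over the regular locus into the stabilization of \eqref{eqn:commutative-2} compatibly with the Lashof smoothing and with the submersion \eqref{projection1}. This is the fibered analogue of the technical heart of \cite{AMS,AMS2}; once it is in place, the remaining assertions are a careful but routine transcription of those constructions.
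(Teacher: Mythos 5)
Your overall architecture is the same as the paper's: a purely vertical finite-dimensional approximation scheme (regular over the preimage of the regular base locus, cf. Lemma \ref{lemma_vertical_obstruction}) producing the singular chart $K_{\rm vert}$, a further horizontal obstruction space making the scheme regular everywhere and producing $K$, the identification of a neighborhood of $V_{\rm vert}^{\rm reg}$ in $V$ with a disk bundle in $W_{\rm hor}$, Lashof smoothing made compatible with the base moduli via \cite[Lemma 4.5]{AMS} so that \eqref{projection1} is a smooth submersion, and doubly framed curve arguments for the equivalences with AMS charts. Two harmless deviations: you demand that $\iota_{\rm vert}$ cover the vertical cokernels of \emph{all} stable maps (the paper only needs regularity near $\pi_{P/B}^{-1}(K)$ for compact $K$ in the regular base locus, which is what its Lemma \ref{lemma_vertical_obstruction} provides), and you insist that $\iota_{\rm hor}$ take values in $\pi_P^*TB$ and be pulled back from the base (the paper explicitly does not require this; any $(W_{\rm hor},\iota_{\rm hor})$ making the total scheme regular in the sense of Definition \ref{defn_partial_regularity} suffices). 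Also, the extension of the smoothing from ${\rm Im}(\theta_{\rm hor})$ to the rest of $V$ is done in the paper with the \emph{relative} Lashof theorem of \cite[Appendix B]{Bai_Xu_Arnold}, not only with the arguments of \cite{AMS}.

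The genuine gap is in your construction of $\theta_{\rm hor}$ for item (3). You claim that for small $e_{\rm hor}$ the perturbed base equation $\bar\partial_{J_B}u_B+\iota_{\rm hor}(e_{\rm hor})(u_B)=0$ has a \emph{unique} solution near the honest one "by the implicit function theorem (surjectivity of $D_{u_B}$)," and you use this to identify a neighborhood of $V_{\rm vert}^{\rm reg}$ in $V$ with a neighborhood of the zero section of $W_{\rm hor}$. Surjectivity of $D_{u_B}$ gives existence of a positive-dimensional family of nearby solutions (of dimension ${\rm ind}\,D_{u_B}$, plus the thickening and framing directions), not uniqueness, so this sentence does not produce the claimed homeomorphism: there is no canonical way to attach a point of $V$ with $e_{\rm hor}\neq 0$ to a point of $V_{\rm vert}^{\rm reg}$ without a tubular-neighborhood construction. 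The paper fills exactly this step differently: it works fiberwise over $B_{k+2,d}$, chooses a continuous $G_d$-invariant family of Riemannian metrics on the smooth fibers $V(\rho)$ of the forgetful map $V\to B_{k+2,d}$, uses the splitting $T^{\rm vert}_vV=T_vV_{\rm vert}^{\rm reg}\oplus W_{\rm hor}$ along $V_{\rm vert}^{\rm reg}$ to identify $W_{\rm hor}$ with the fiberwise normal bundle, and defines $\theta_{\rm hor}$ by the fiberwise exponential map; only afterwards does it invoke the implicit function theorem, to show that $S_{\rm hor}$ restricted to the exponential fibers is a local homeomorphism onto a disk in $W_{\rm hor}$, and then reparametrizes $\theta_{\rm hor}$, $\wh\theta_{\rm hor}$ so that $\wh\theta_{\rm hor}\circ\tau_{W_{\rm hor}}=S_{\rm hor}\circ\theta_{\rm hor}$ as required in \eqref{eqn:commutative-2}. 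With that replacement (and the relative smoothing compatibility just mentioned, which is what makes $\iota$ in \eqref{commutative} a smooth embedding and the diagram in item (4) meaningful), the rest of your sketch goes through as in the paper.
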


The proof of Proposition \ref{prop_chart_detail} is deferred to the next subsection.

Now we can prove Proposition \ref{prop44}. We first choose pseudocycle representatives of the constraints. For each pseudocycle $f_i: W_i \to B$, denote $\tilde W_i $ the total space of the pullback of $P \to B$ and 
\beqn
\tilde f_i: \tilde W_i \to P
\eeqn
the induced map. It is easy to see that $\tilde f_i$ is again a pseudocycle and represents the class ${\rm PD}_P ((\pi_P^* a_i)_{\mb k})$. On the other hand, choose $p$-pseudocycles
\beqn
f_0: W_0 \to M,\ {\rm resp.}\  f_\infty: W_\infty \to M
\eeqn
representing the classes $\beta_0$ resp. $\beta_\infty$. The composition with the inclusions $\iota_0: M \to P|_{p_0}$ resp. $\iota_\infty: M \to P|_{p_\infty}$ we obtain $p$-pseudocycles
\beqn
\tilde f_0 = \iota_0 \circ f_0: W_0 \to P\ {\rm resp.}\ \tilde f_\infty = \iota_\infty \circ f_\infty: W_\infty \to P.
\eeqn

Let the induced derived orbifold chart of $K$ be
\beqn
{\mc C} = ({\mc U}, {\mc E}, {\mc S}, \psi): = (V/G, E/G, S/G, \Psi/G)
\eeqn
and also denote
\beqn
{\mc C}_{\rm vert}^{\rm reg} = ({\mc U}_{\rm reg}, {\mc E}_{\rm vert}, {\mc S}_{\vert}, \psi_{\rm vert}): = (V_{\rm vert}^{\rm reg}/G, E_{\rm vert}/G, S_{\rm vert}/G, \Psi_{\rm vert}/G).
\eeqn
Consider the $\tilde m$ specific points $q_l = [(u_l, {\bf z}_l)] \in U\subset {\mc M}{}_{0, k+2}^{\rm simple}(J_B, A)$ contributing to the Gromov--Witten invariant of $B$ and the fibers
\beqn
{\mc U}^{\rm reg}_{{\rm vert}, q_l}:= \tilde \pi_{\rm vert}^{-1}(q_l) \subset {\mc U}_{\rm vert}^{\rm reg}.
\eeqn
Then the quadruple
\beqn
{\mc C}_{{\rm vert}, q_l} := ({\mc U}^{\rm reg}_{{\rm vert}, q_l}, {\mc E}_{\rm vert}, {\mc S}_{\rm vert})
\eeqn
is a stably complex derived orbifold chart for the graph moduli $\ov{\mc M}{}_{0, 2, {\bf z}_l}^{\rm graph}( \tilde J_P|_{u_l^* P}, \tilde A)$, where we abuse the notations by writing ${\mc E}_{\rm vert} = (E_{\rm vert}/G )|_{{\mc U}^{\rm reg}_{\rm vert}(q_l)}$ and ${\mc S}_{\rm vert} = (S_{\rm vert}/G) |_{{\mc U}^{\rm reg}_{\rm vert}(q_l)}$. 
Choose a distance function on ${\mc U}$. For any $\delta>0$, let 
\beqn
{\mc U}_{\rm vert}^{{\rm reg},\delta}(q_l) \subset {\mc U}_{{\rm vert}}^{\rm reg}
\eeqn
be the $\delta$-neighborhood of ${\mc U}_{{\rm vert}, q_l}^{\rm reg}$ inside ${\mc U}_{\rm vert}^{\rm reg}$.

Now, fix FOP transverse perturbations for each $q_l$
\beqn
{\mc S}_{{\rm vert}, q_l}': {\mc U}_{{\rm vert}, q_l}^{\rm reg} \to {\mc E}_{\rm vert}.
\eeqn

\noindent {\bf Claim.} One can extend ${\mc S}_{{\rm vert}, q_l}'$ to a FOP transverse perturbation 
\beqn
{\mc S}_{\rm vert}': {\mc U}_{\rm vert}^{\rm reg} \to {\mc E}_{\rm vert}.
\eeqn

\vspace{0.1cm}
\noindent {\it Proof of the claim.} Notice that ${\mc M}_{0, k+2}^{\rm reg}(J_B, A) \subset \ov{\mc M}_{0, k+2}(J_B, A)$ is a manifold rather than an orbifold, the normal bundle of ${\mc U}_{\rm vert}^{\rm reg}(q_l)$ inside ${\mc U}_{\rm vert}^{\rm reg}$, which is the pullback of the tangent space $T_{q_l} {\mc M}_{0, k+2}^{\rm reg}(J_B, A)$ under the projection map \eqref{projection1}, is a vector bundle whose fibers are trivial representations of the isotropy groups. Then by (6) of Theorem \ref{thm_FOP_property}, one can extend the FOP transverse perturbation to a FOP transverse perturbation
\beqn
{\mc S}_{\rm vert}': {\mc U}_{\rm vert}^{\rm reg} \to {\mc E}_{\rm vert}.
\eeqn
This finishes the proof of this claim.
\vspace{0.1cm}

Next, look at the ``further thickened" chart ${\mc U}:=V/G$ which contains ${\mc U}_{\rm vert}$ by the diagram \eqref{commutative}. As $W_{\rm hor} \to V_{\rm vert}^{\rm reg}$ is a $G$-equivariant vector bundle, it descends to an orbifold vector bundle 
\beqn
{\mc W}_{\rm hor} \to {\mc U}_{\rm vert}^{\rm reg}.
\eeqn
By (3) of Proposition \ref{prop_chart_detail}, inside ${\mc U}$ there is an open subset identified with ${\mc W}_{\rm hor}^{\rm reg} \subset {\mc W}_{\rm hor}$ which contains ${\mc U}_{\rm vert}^{\rm reg}$. By choosing an appropriate metric on the bundle ${\mc W}_{\rm hor}$, we may assume that ${\mc W}_{\rm hor}^{\rm reg}$ is the open disk bundle of radius $2\epsilon$ for some $\epsilon > 0$, denoted by ${\mc W}_{\rm hor}^{2\epsilon}$. 

\vspace{0.1cm}

\noindent {\bf Claim.} There exists an FOP transverse extension of ${\mc S}_{\rm vert}'$ to the open subset ${\mc W}_{\rm hor}^{2\epsilon}$, denoted by ${\mc S}'$, satisfying the following conditions.
\begin{enumerate}
\item Over ${\mc W}_{\rm hor}^\epsilon$, the section ${\mc S}'$ is given by the stabilization of ${\mc S}_{\rm vert}'$ by the tautological section.

\item $({\mc S}')^{-1}(0) = ({\mc S}_{\rm vert}')^{-1}(0)$.
\end{enumerate}

\noindent {\it Proof of the claim.}
We introduce some notations for the argument. Let $x$ be the coordinate on ${\mc U}_{\rm vert}^{\rm reg}$ and $y$ be the fiber coordinate on ${\mc W}_{\rm hor}$. We can write any extension ${\mc S}'$ as
\beqn
{\mc S}'(x, y) = \Big( {\mc S}_{\rm vert}'(x,y), {\mc S}_{\rm hor}'(x,y) \Big)
\eeqn
where the first component takes value in $\pi_{\rm hor}^* {\mc E}_{\rm vert}$ and the second one takes value in $\pi_{\rm hor}^* {\mc W}_{\rm hor}$. The value of ${\mc S}_{\rm vert}'(x,0)$ has been fixed. Withint ${\mc W}_{\rm hor}^\epsilon$, we define 
\beq
{\mc S}'(x,y) = ({\mc S}_{\rm vert}'(x,0), y)
\eeq
where $y$ is regarded as the tautological section of $\pi_{\rm hor}^* {\mc W}_{\rm hor}$. Then by (5) of Theorem \ref{thm_FOP_property} (stabilization property), ${\mc S}'$ is FOP transverse and obviously $({\mc S}')^{-1}(0) = ({\mc S}'_{\operatorname{vert}})^{-1}(0)$. To extend to ${\mc W}_{\rm hor}^{2\epsilon}$, we keep ${\mc S}_{\rm hor}'$ to be the tautological section and extend ${\mc S}_{\rm vert}'$ arbitrarily. This will not create any more zeroes and hence still FOP transverse. \hfill {\it End of the Proof of the claim.}

Lastly, by the the ``CUDV property'' of Theorem \ref{thm_FOP_property}, one can find an FOP transverse perturbation on ${\mc U}$ which agrees with ${\mc S}'$ over $\theta_{\rm hor}(\ov{W_{\rm hor}^\epsilon})$ constructed above. By abuse of notation, still denote the perturbation by ${\mc S}'$. Notice that by (2) of Theorem \ref{thm_FOP_property}, one can assume
\beqn
\| {\mc S} - {\mc S}' \|_{C^0} \leq \delta
\eeqn
for any $\delta>0$. 

Now we look at the intersection numbers. Consider 
\beqn
\Big( ( {\mc S}')^{-1}(0) \cap {\mc U}_{\rm free} \Big) \cap \ev_P^{-1} \Big( f_0(W_0) \times f_\infty(W_\infty) \times \prod_{i=1}^k \tilde f_i (\tilde W_i) \Big)
\eeqn
whose signed count coincides with the GW invariant
\beqn
\GW_{0, k+2}^{P, {\mb k}; \tilde A} \big( {\rm PD}_P(\beta_0^P), {\rm PD}_P(\beta_\infty^P), (\pi_{P/B}^* \alpha_1)_{\mb k}, \ldots, (\pi_{P/B}^* \alpha_k)_{\mb k} \big).
\eeqn
Also, obviously
\begin{multline*}
\Big( ( {\mc S}')^{-1}(0) \cap {\mc U}_{\rm free} \Big) \cap \ev_P^{-1} \Big( \tilde f_0( \tilde W_0) \times \tilde f_\infty( \tilde W_\infty) \times \prod_{i=1}^k \tilde f_i (\tilde W_i) \Big) \\
\subset \Big\{ x \in {\mc U}\ |\ \pi_{P/B}( \ev_0(x)) = p_0, \pi_{P/B}(\ev_\infty(x)) = p_\infty,\ \pi_{P/B} \circ \ev_i( x) \in f_i(W_i) \subset B \Big\}
\end{multline*}

As inside ${\mc U}$, the perturbed zero locus $({\mc S}')^{-1}(0)$ can be arbitrarily close to ${\mc S}^{-1}(0)$, the above intersection points can be arbitrarily close to points in ${\mc S}^{-1}(0) \cong \ov{\mc M}{}_{0, k+2}(J_P, A)$ which project to the points $q_l = [u_l, {\bf z}_l]$. In particular, the above intersection points are in the open subset
\beqn
\bigcup_{l=1}^{\tilde m} {\mc W}_{\rm hor}^\epsilon|_{{\mc U}_{\rm vert}^{{\rm reg}, \delta}(q_l)} \subset {\mc W}_{\rm hor}^\epsilon.
\eeqn
By our construction of ${\mc S}'$, within the disk bundle ${\mc W}_{\rm hor}^\epsilon$ the perturbation is a stabilization of ${\mc S}_{\rm vert}'$, hence 
\begin{multline*}
\Big( ( {\mc S}')^{-1}(0) \cap {\mc U}_{\rm free} \Big) \cap \ev_P^{-1} \Big( \tilde f_0 ( \tilde W_0) \times \tilde f_\infty (\tilde W_\infty) \times \prod_{i=1}^k \tilde f_i (\tilde W_i) \Big) \\
= \bigsqcup_{l = 1}^{\tilde m} \Big( ( {\mc S}_{\rm vert}')^{-1}(0)_{\rm free} \cap {\mc U}_{{\rm vert}, q_l}^{\rm reg}  \Big) \cap \ev_0^{-1} ( \tilde f_0 ( \tilde W_0)) \cap \ev_\infty^{-1}( \tilde f_\infty (\tilde W_\infty)).
\end{multline*}

%\textcolor{red}{$C^0$-perturbations of smooth transverse sections sound dangerous, perhaps use the alternative approach sketched in the comment section?}

As the fiber ${\mc U}_{\rm vert}^{\rm reg}(q_l)$ arises from a derived orbifold chart of the moduli space $\ov{\mc M}{}_{0, 2, {\bf z}_l}^{\rm graph}( J_P|_{u_l^* P}, \tilde A)$ and the restriction of ${\mc S}_{\rm vert}'$ to ${\mc U}_{\rm vert}^{\rm reg}(q_l)$ is an FOP transverse perturbation, the signed count of the above intersection set is (up to signs) equal to the graph GW invariant
\beqn
\wt\GW{}_{0, 2, {\bf z}_l}^{u_l^* P, \tilde A}(\beta_0, \beta_\infty) \in {\mb k}.
\eeqn
The detailed sign verification is given in Subsection \ref{subsection_signs}. Summing over all the points in $\Big( ( {\mc S}')^{-1}(0) \cap {\mc U}_{\rm free} \Big) \cap \ev_P^{-1} \Big( \tilde f_0 ( \tilde W_0) \times \tilde f_\infty ( \tilde W_\infty) \times \prod_{i=1}^k \tilde f_i (\tilde W_i) \Big) $, we obtain the desired statement. \qed

%\textcolor{red}{For orientations, it might be better to recall that for transverse zeroes of the $\ov{\partial}$-operator, the sign comes from the comparison with the standard complex-linear operator; from a D-chart, the orientation is induced from the orientation of the ambient space and the obstruction bundle. Using the argument sketched in the comment section, it should be obvious to see the compatibility of the orientations.}

%Given a derived orbifold chart ${\mc C} = ({\mc U}, {\mc E}, {\mc S})$, the determinant line bundle is the real line bundle 
%\beqn
%\det T{\mc U} \otimes \det {\mc E}^*
%\eeqn
%and an orientation on ${\mc C}$ is an orientation on this determinant line bundle. 

\subsection{Proof of Proposition \ref{prop_chart_detail}} 

\subsubsection{The AMS construction}

As before, the first step of construction a global Kuranishi chart is to approximate the symplectic form $\Omega_P^\kappa$ by a rational one, denoted by $\Omega_P'\in \Omega^2(P)$. When $\Omega_P'$ is sufficiently close to $\Omega_P^\kappa$, $J_P$ is still tamed by $\Omega_P'$. After rescaling, one may assume that $\Omega_P'$ is integral. For the curve class $\tilde A \in \pi_2(P)$, denote 
\beqn
d(\tilde A) = \langle \Omega_P', \tilde A \rangle \in {\mb Z}.
\eeqn

Second, for each $d > 0$, consider the moduli space $\ov{\mc M}_{0, k+2}(\mb{CP}^d,d)$ (of unparametrized stable maps). The group $G_d = U(d+1)$ acts on $\ov{\mc M}_{0, k+2}(\mb{CP}^d, d)$. There is a $G_d$-invariant subset $B_{k+2, d} \subset \ov{\mc M}_{0, k+2}(\mb{CP}^d, d)$ consisting of configurations whose images are not contained in any hyperplane. $B_{k+2, d}$ is a smooth complex $G_d$-manifold. Then consider the universal curve 
\beqn
C_{k+2, d} \to B_{k+2, d}.
\eeqn
Let $C_{k+2, d}^*\subset C_{k+2, d}$ be the subset of points which are not marked points or nodal points of fibers. 

Similar to the AMS construction for the graph moduli, one consider the following objects.
\begin{defn}
\begin{enumerate}

\item A finite-dimensional approximation scheme on $C_{k+2, d}$ is a representation $W$ of $G_d$ together with an equivariant linear map 
\beqn
\iota: W \to C^\infty_c ( C_{k+2, d}^* \times P, \Omega^{0,1}_{C_{k+2, d}^*/ B_{k+2, d}} \otimes TP).
\eeqn

\item Given $\tilde A \in \pi_*^{-1}(A)$ and a finite-dimensional approximation scheme $(W, \iota)$ over $C_{k+2, d}$ for $d = d(\tilde A)$, the {\bf pre-thickening} is the set
\beq\label{eqn:pre-thicken}
V^{\rm pre} = \Big\{ (\rho, u, e)\ |\ \rho \in B_{k+2, d},\ u: C_\rho \to P,\ e \in W,\ \ov\partial_{J_P} u + \iota(e)|_{C_\phi} = 0,\ [u] = \tilde A \Big\}.
\eeq
\end{enumerate}
\end{defn}

\begin{defn}\label{defn_partial_regularity}
We say that a finite-dimensional approximation scheme $(W, \iota)$ is regular over an open subset $U \subset \ov{\mc M}_{0,k+2}(J_P, \tilde A)$ if for each $y \in U$ and each $(\rho, u, 0) \in V^{\rm pre}$ sent to $y$ by the forgetful map, the linearization of the equation $\ov\partial_{J_P} u + \iota(e) = 0$ is surjective.
\end{defn}

Recall that we have a natural projection $\pi_{P/B}: \ov{\mc M}{}_{0, k+2}(J_P, \tilde A) \to \ov{\mc M}{}_{0, k+2}(J_B, A)$.

\begin{lemma}\label{lemma_vertical_obstruction}
One can choose a finite-dimensional approximation scheme $(W^{\rm vert}, \iota^{\rm vert})$ (called the vertical obstruction space) satisfying the following two conditions.
\begin{enumerate}
    \item For all $e \in W^{\rm vert}$, $\iota^{\rm vert} (e)$ lies in $T^{\rm vert} P$.

    \item For any compact subset $K \subset \ov{\mc M}{}_{0, k+2}^{\rm reg}(J_B, A)$, $(W^{\rm vert}, \iota^{\rm vert})$ is regular near $\pi_{P/B}^{-1}(K)$.
\end{enumerate}
\end{lemma}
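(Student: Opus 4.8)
The plan is to reduce the regularity of the thickened equation over $\pi_{P/B}^{-1}(K)$ to a purely vertical Fredholm problem and then to solve the latter by the Abouzaid--McLean--Smith recipe, restricted to vertical tangent directions. The input is the short exact sequence of complex vector bundles $0 \to T^{\rm vert}P \to TP \to \pi_{P/B}^* TB \to 0$, which holds because $\pi_{P/B}\colon (P,J_P)\to (B,J_B)$ is pseudo-holomorphic. First I would fix the auxiliary data (a connection on $P$, a fiberwise-compatible exponential map, etc.) so that $\pi_{P/B}$ intertwines the $\ov\partial$-operators and so that $\exp^P$ preserves the fibers along vertical directions. Then, for $(\rho,u,0)\in V^{\rm pre}$ with projection $v=\pi_{P/B}\circ u$ representing a point of ${\mc M}{}_{0,k+2}^{\rm reg}(J_B,A)$ lying over $K$, the linearized operators assemble into a short exact sequence of elliptic complexes $0 \to D_u^{\rm vert}\to D_u \to D_v \to 0$, where $D_u^{\rm vert}$ is a Cauchy--Riemann operator on $u^*T^{\rm vert}P$ and $D_v$ is the base operator. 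The associated six-term exact sequence of kernels and cokernels, together with the vanishing ${\rm coker}\,D_v=0$ coming from the transversality of ${\mc M}{}_{0,k+2}^{\rm reg}(J_B,A)$, shows that the natural map ${\rm coker}\,D_u^{\rm vert}\twoheadrightarrow {\rm coker}\,D_u$ is surjective. Hence it suffices to produce obstruction sections valued in $T^{\rm vert}P$ that surject onto ${\rm coker}\,D_u^{\rm vert}$ for every $u$ lying over $\pi_{P/B}^{-1}(K)$.

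Granting this reduction, I would run the thickening construction of \cite[Lemma 4.2]{AMS2} unchanged, except that all auxiliary $(0,1)$-forms are required to land in the subbundle $\Omega^{0,1}_{C_{k+2,d}^*/B_{k+2,d}}\otimes T^{\rm vert}P$. Concretely: for each point of the compact set $\pi_{P/B}^{-1}(K)$, pick finitely many interior points of the (normalized) stable domain away from nodes and marked points, and smooth compactly supported vertical-valued $(0,1)$-forms near those points whose values span the finite-dimensional space ${\rm coker}\,D_u^{\rm vert}$. By openness of surjectivity and compactness of $\pi_{P/B}^{-1}(K)$ one needs only finitely many such choices, and averaging over $G_d = U(d+1)$ --- which acts only on the curves and trivially on $P$, hence preserves $T^{\rm vert}P$ --- produces a $G_d$-representation $W^{\rm vert}$ together with a $G_d$-equivariant linear map $\iota^{\rm vert}$ into vertical-valued $(0,1)$-forms. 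Property (1) is then built in, and property (2) holds because the enlarged linearized operator $D_u\oplus\iota^{\rm vert}$ is surjective at every point over $\pi_{P/B}^{-1}(K)$ and surjectivity is an open condition, so regularity persists on an open neighborhood of $\pi_{P/B}^{-1}(K)$.

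The step I expect to be the main obstacle is the honest verification that $D_u$, $D_u^{\rm vert}$, and $D_v$ form an exact sequence of Fredholm complexes in every degenerate configuration appearing in the Gromov compactification: nodal domains must be handled on the normalization with matching conditions at the preimages of nodes, and one must check that the sphere bubbles --- which over a fixed regular base curve are precisely the fiberwise holomorphic spheres of the associated Hamiltonian fibration over $S^2$ --- contribute only to $D_u^{\rm vert}$, so that vertical obstructions alone restore surjectivity. This compatibility, together with the requirement that the exponential maps and connections used throughout be chosen compatibly with $\pi_{P/B}$, is the one genuinely new point; once it is in place, the remainder is the standard AMS argument and properties (1) and (2) follow.
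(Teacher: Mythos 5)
Your argument is correct and is essentially the paper's own proof, which is stated in two sentences: since the base moduli space is already regular over $K$, the cokernel of the full linearized operator is hit by vertical directions, so a sufficiently large finite-dimensional approximation scheme valued in $T^{\rm vert}P$ suffices. Your fleshed-out version (the block upper-triangular structure of $D_u$, the surjection ${\rm coker}\,D_u^{\rm vert}\twoheadrightarrow{\rm coker}\,D_u$, the AMS spanning-plus-compactness-plus-openness construction, and the observation that bubbles over a regular base curve are fiberwise spheres) matches exactly the structure the paper relies on, including its later Lemmas on the linearized operators.
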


\begin{proof}
This is because downstairs away from multiple covers and nodal curves the moduli space is already regular. Therefore obstructions only lie in the vertical direction, so we can choose a sufficiently large finite dimensional approximation scheme lying in the vertical direction of the tangent bundle of $P$.
\end{proof}

We first use the vertical obstruction space to construct a Kuranishi chart which is a manifold only away from certain bad locus. We may shrink the open subset ${\mc M}{}_{0, k+2}^{\rm reg}(J_B, A)$ given by the assumption of Theorem \ref{thm41} to a precompact neighborhood of the points $p_1, \ldots, p_{\tilde m}$. Then we may assume that there is a finite dimensional approximation scheme $(W_{\rm vert}, \iota_{\rm vert})$ as in Lemma \ref{lemma_vertical_obstruction}, such that it is regular over the corresponding open subset of $\ov{\mc M}{}_{0, k+2}^{\rm reg}(J_P, \tilde A)$.  Let the pre-thickening obtained from such data be $V_{\rm vert}^{\rm pre}$. 

To proceed, we use framings to construct the actual AMS chart. For each triple $(\rho, u, e)$, the pullback form $u^* \Omega_P' \in \Omega^2(C_\rho)$ has integral being $d = d(\tilde A)$, hence there is a Hermitian holomorphic line bundle $L_u \to C_\rho$, unique up to unitary isomorphism, whose curvature form is $-2 \pi {\bf i} u^* \Omega_P'$. A framing on $(\rho, u, e)$ is a basis of $H^0(L_u)$
\beqn
F = (f_0, \ldots, f_d).
\eeqn
Then define 
\beqn
V_{\rm vert}:= \Big\{(\rho, u, e, F)\ |\ (\rho, u, e) \in V_{\rm vert}^{\rm pre},\ F\ {\rm is\ a\ framing}\Big\}
\eeqn
which has the structure of a $G_d$-equivariant principal $G_d^{\mb C}$-bundle. Define the obstruction bundle $E_{\rm vert} \to V$ as
\beqn
E_{\rm vert} = W_{\rm vert} \oplus \pi_{V/B}^* TB_{k+2, d} \oplus {\mf g}_d.
\eeqn
Choosing a $G_d$-invariant Riemannian metric on $B_{k+2, d}$ and denote the exponential map by $\exp_B$, which identifies a neighborhood $\Delta^+(B_{k+2, d})$ of the diagonal in $B_{k+2, d}\times B_{k+2, d}$ with a neighborhood of the zero section of the tangent bundle of $B_{k+2, d}$. Shrink $V$ to the open subset where $(\rho, \rho_F) \in \Delta^+(B_{k+2, d})$ and where the Hermitian matrix $H_F$ with entries
\beqn
\int_{C_\rho} \langle f_i, f_j \rangle u^* \Omega_P'
\eeqn
is positive definite. Define the Kuranishi section $S_{\rm vert}: V_{\rm vert} \to E_{\rm vert}$ as 
\beq\label{eqn:kuranishi-vert}
S_{\rm vert} (\rho, u, e, F) = \Big( e, \exp_B^{-1}(\rho, \rho_F), \exp_H^{-1}(H_F) \Big).
\eeq
If $S_{\rm vert} (\rho, u, e, F) = 0$, we see $u: C_\rho \to P$ is a genuine $J_P$-holomorphic map. Then define the $G_d$-equivariant map
\beqn
\psi_{\rm vert}: S_{\rm vert}^{-1}(0)/G_d \to \ov{\mc M}{}_{0, k+2}(J_P,\tilde A),\ \tilde \psi( \rho, u, 0, F)  = [u: C_\rho \to P].
\eeqn
Similar to Lemma \ref{lemma34}, one has 

\begin{lemma}
$\psi_{\rm vert}$ is a homeomorphism. \qed
\end{lemma}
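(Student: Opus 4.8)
The plan is to follow, essentially verbatim, the argument proving Lemma \ref{lemma34} — which itself adapts the construction of \cite[Section 4]{AMS} — after the evident substitutions: the Hamiltonian fibration $\tilde M_\phi \to S^2$ is replaced by the total space $P$, the coupling form $\tilde\Omega_\phi$ by its integral approximation $\Omega_P'$, the fibered almost complex structure $\tilde J_\phi$ by $J_P$, and the parametrized graph moduli space by the ordinary moduli space of stable maps $\ov{\mc M}{}_{0, k+2}(J_P, \tilde A)$. One point to emphasize at the outset is that the failure of the vertical finite-dimensional approximation scheme $(W_{\rm vert}, \iota_{\rm vert})$ to surject onto cokernels away from $\ov{\mc M}{}_{0,k+2}^{\rm reg}(J_P,\tilde A)$ is irrelevant here, since $\psi_{\rm vert}$ involves only the zero locus $S_{\rm vert}^{-1}(0)$, on which the $W_{\rm vert}$-component forces $e = 0$ and hence $u$ to be genuinely $J_P$-holomorphic.

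Concretely, I would proceed as follows. First, read off from the three components of $S_{\rm vert}$ in \eqref{eqn:kuranishi-vert} that $S_{\rm vert}(\rho, u, e, F) = 0$ precisely when $e = 0$ (so $\ov\partial_{J_P}u = 0$), $\rho = \rho_F$ (so $C_\rho$ is the domain determined by the framing), and $H_F = \mathrm{Id}$ (so $F$ is a unitary basis of $H^0(L_u)$); this shows $\psi_{\rm vert}$ is well defined. For surjectivity I would start from a stable $J_P$-holomorphic map $u \colon C \to P$ of class $\tilde A$ with its $k+2$ marked points, form the Hermitian holomorphic line bundle $L_u \to C$ of total degree $d = d(\tilde A)$ with curvature $-2\pi\mathbf{i}\, u^*\Omega_P'$ — using that $J_P$ is tamed by $\Omega_P'$, so $u^*\Omega_P'$ is positive on every non-contracted component and $L_u$ is basepoint free, contracted ghost components being handled exactly as in \cite{AMS} — and then check that a unitary basis $F$ of $H^0(L_u)$ for the $L^2$-pairing $\int_C \langle f_i, f_j\rangle\, u^*\Omega_P'$ induces a stable map to $\mb{CP}^1 \times \mb{CP}^d$ whose $\mb{CP}^d$-image lies in no hyperplane, hence a point $\rho \in B_{k+2, d}$ and an identification of the stabilization of $C$ with $C_\rho$; then $(\rho, u, 0, F) \in S_{\rm vert}^{-1}(0)$ and $\psi_{\rm vert}$ sends it to $[u]$. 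For injectivity modulo $G_d$, suppose $\psi_{\rm vert}(\rho_1, u_1, 0, F_1) = \psi_{\rm vert}(\rho_2, u_2, 0, F_2)$; then there is a biholomorphism $\varphi \colon C_{\rho_1} \to C_{\rho_2}$ fixing the marked points with $u_1 = u_2 \circ \varphi$, so that $\varphi^* L_{u_2} \cong L_{u_1}$ and $\varphi^* F_2$ is again unitary, whence $F_1 = g\, \varphi^* F_2$ for a unique $g \in G_d$ and the two quadruples lie on a single $G_d$-orbit. Finally, $\psi_{\rm vert}$ and its inverse are continuous because the Gromov topology on $\ov{\mc M}{}_{0, k+2}(J_P, \tilde A)$ agrees with the quotient $C^0$-topology on $V_{\rm vert}/G_d$, exactly as at the end of the proof of Lemma \ref{lemma34}, and a homeomorphism follows.

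The one step I expect to require genuine care — and essentially the only such step — is the back-and-forth between unitary framings of $L_u$ and points of $B_{k+2, d}$: this needs $\Omega_P'$ chosen positive enough that $L_u$ restricts to a very ample bundle with $h^0 = d+1$ on every curve in question (so that a framing yields an embedding into $\mb{CP}^d$ with non-degenerate image), together with the correct treatment of contracted ghost components under stabilization. This is precisely the arrangement built into the AMS framework and already invoked in the construction above; once it is granted, the argument is word-for-word that of Lemma \ref{lemma34}.
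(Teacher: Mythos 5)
Your proposal is correct and is essentially the paper's own argument: the paper proves this lemma by declaring it ``similar to Lemma \ref{lemma34}'', and your write-out of surjectivity (via the unitary framing of $L_u$ with curvature $-2\pi{\bf i}\,u^*\Omega_P'$), injectivity (via the biholomorphism $\varphi$ and $F_1 = g\,\varphi^*F_2$), and continuity (Gromov topology versus $C^0$-topology), together with the observation that only the zero locus $e=0$ matters so partial regularity of $(W_{\rm vert},\iota_{\rm vert})$ is irrelevant, is exactly that adaptation. One cosmetic correction: in this unparametrized setting $B_{k+2,d}\subset \ov{\mc M}{}_{0,k+2}(\mb{CP}^d,d)$, so the framing yields a stable map $\rho_F\colon C\to \mb{CP}^d$ with no $\mb{CP}^1$ factor, and no stabilization of $C$ is needed, since ghost components (contracted by both $u$ and $\rho_F$) already carry at least three special points by stability of $u$.
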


Therefore one obtains a singular global Kuranishi chart  
\beqn\label{eqn:global-vertical}
K_{\rm vert} = (G_d, V_{\rm vert}, E_{\rm vert}, S_{\rm vert}, \psi_{\rm vert})
\eeqn
which is regular over ${\mc M}{}_{0, k+2}^{\rm reg}(J_P, \tilde A)$.

One can then choose another finite-dimensional approximation scheme $(W_{\rm hor}, \iota_{\rm hor})$ over $C_{k+2, d}$ (called the horizontal obstruction space\footnote{Although the corresponding sections do not necessarily take value in the horizontal part of the pulled-back tangent bundle, we hope that this notion will not cause any confusion.}) and denote
\beqn
(W, \iota) = (W_{\rm vert} \oplus W_{\rm hor}, \iota_{\rm vert} \oplus \iota_{\rm hor})
\eeqn
so that it is regular everywhere in the sense of Definition \ref{defn_partial_regularity}. Notice that $W_{\rm hor}$ defines a $G_d$-equivariant vector bundle 
\beqn
\pi_{\rm hor}: W_{\rm hor} \to V_{\rm vert}
\eeqn 

Parallel to the previous discussions on the singular global Kuranishi chart, now we can construct a global Kuranishi chart for $\ov{\mc M}{}_{0, k+2}(J_P,\tilde A)$. Using the finite-dimensional approximation scheme $(W, \iota)$, the pre-thickening $V^{\rm pre}$ is defined as in \eqref{eqn:pre-thicken}. We obtain a global Kuranishi chart
\beqn
K = (G_d, V, E, S, \psi)
\eeqn
where $V = \{(\rho, u, e, F)\ |\ (\rho, u, e) \in V^{\rm pre},\ F\ {\rm is\ a\ framing}\}$; $E = W \oplus \pi_{V/B}^* TB_{k+2, d} \oplus {\mf g}_d$, which differs from $E_{\rm vert}$ by the direct summand $W_{\rm hor}$; the Kuranishi section $S$ is defined using the same formula \eqref{eqn:kuranishi-vert}; the map $\psi$ is defined similarly to $\psi_{\rm vert}$, which gives rise to a homeomorphism $S_{\rm vert}^{-1}(0)/G_d \xrightarrow{\sim} \ov{\mc M}{}_{0, k+2}(J_P,\tilde A)$.

\begin{lemma}
Away from the horizontally singular locus, $V$ is a stabilization of $V_{\rm vert}$. More precisely, there is a $G_d$-invariant open neighborhood $V_{\rm vert}^{\rm reg} \subset V_{\rm vert}$ of $\psi_{\rm vert}^{-1} \left( \ov{\mc M}{}_{0, k+2}^{\rm reg}(J_P, \tilde A)\right)$ which is a topological manifold, a $G_d$-invariant open neighborhood of the zero section of $W_{\rm hor}|_{V_{\rm vert}^{\rm reg}}$, denoted by $W_{\rm hor}^{\rm reg}$, a $G_d$-equivariant bundle map $\wh\theta_{\rm hor}$ covering a $G_d$-equivariant map $\theta_{\rm hor}$
\beq\label{eqn_stabilization_map}
\xymatrix{    \pi_{\rm hor}^* E_{\rm vert} \oplus \pi_{\rm hor}^* W_{\rm hor} \ar[rr]^-{\wh\theta_{\rm hor}} \ar[d]   &  &     E \ar[d] \\
               W_{\rm hor}^{\rm reg} \ar[rr]_-{\theta_{\rm hor}}  &  & V }
\eeq
satisfying the following conditions. 
\begin{enumerate}

\item $\theta_{\rm hor}$ is a homeomorphism onto an open subset whose restriction to the zero section coincides with the embedding $V_{\rm vert}^{\rm reg} \hookrightarrow V$.

\item Let $S_{\rm hor}: V \to W_{\rm hor}$ be the $W_{\rm hor}$-component of the Kuranishi map $S: V \to E$ and let $\tau_{W_{\rm hor}}: W_{\rm hor}^{\rm reg} \to \pi_{\rm hor}^* W_{\rm hor}$ be the tautological section. Then 
\beqn
\wh\theta_{\rm hor} \circ \tau_{W_{\rm hor}} = S_{\rm hor} \circ \theta_{\rm hor}.
\eeqn
\end{enumerate}
\end{lemma}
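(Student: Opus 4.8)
The plan is to run the gluing analysis underlying the Abouzaid--McLean--Smith construction (cf.\ \cite[Section 4]{AMS} and \cite[Section 4]{AMS2}) and to observe that, over the locus where the vertical obstruction scheme is already regular, enlarging the finite-dimensional approximation scheme from $(W_{\rm vert}, \iota_{\rm vert})$ to $(W, \iota) = (W_{\rm vert}\oplus W_{\rm hor}, \iota_{\rm vert}\oplus\iota_{\rm hor})$ enlarges the linearized problem only by the vector space $W_{\rm hor}$, i.e.\ it is a stabilization by the complex vector bundle $W_{\rm hor}$. Throughout, the line bundle $L_u\to C_\rho$ depends only on $(\rho,u)$, so the framing data over $V_{\rm vert}$ and over $V$ are literally the same and the $G_d^{\mb C}$-bundle structure is inert in the argument.

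First I would fix the neighborhood. By Lemma~\ref{lemma_vertical_obstruction}, $(W_{\rm vert},\iota_{\rm vert})$ is regular in the sense of Definition~\ref{defn_partial_regularity} over the chosen precompact open subset of $\ov{\mc M}{}_{0,k+2}^{\rm reg}(J_P,\tilde A)$. The standard gluing theorem for stable maps with a regular finite-dimensional obstruction scheme --- which applies verbatim when the domains acquire nodes, here corresponding to bubbles in the fibers of $P$ --- produces local topological charts on the pre-thickening, so $V_{\rm vert}^{\rm pre}$, and hence its $G_d$-equivariant principal $G_d^{\mb C}$-bundle $V_{\rm vert}$, is a topological $G_d$-manifold on a $G_d$-invariant open neighborhood of $\psi_{\rm vert}^{-1}(\ov{\mc M}{}_{0,k+2}^{\rm reg}(J_P,\tilde A))$; this neighborhood is what I take to be $V_{\rm vert}^{\rm reg}$. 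The same gluing analysis applied to the everywhere-regular scheme $(W,\iota)$ shows that $V$ is a topological $G_d$-manifold, and a dimension count gives $\dim V = \dim V_{\rm vert} + \dim W_{\rm hor}$.

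The key step is the local product structure. The $W_{\rm hor}$-component $S_{\rm hor}\colon V\to W_{\rm hor}$ of the Kuranishi section satisfies $S_{\rm hor}^{-1}(0) = V_{\rm vert}$, since a solution of the $(W,\iota)$-equation with vanishing $W_{\rm hor}$-component of $e$ is precisely a solution of the $(W_{\rm vert},\iota_{\rm vert})$-equation. At a point $v\in V_{\rm vert}^{\rm reg}$, the linearization of the $(W,\iota)$-equation in the directions tangent to $V_{\rm vert}$ --- the operator $D_{\rm vert}$, which is surjective by regularity --- is already onto, and adjoining the free $W_{\rm hor}$-directions yields a splitting $\ker(D_{\rm vert}\oplus\iota_{\rm hor})\cong \ker D_{\rm vert}\oplus W_{\rm hor}$ via any right inverse of $D_{\rm vert}$. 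Feeding this into the implicit function theorem, carried out in the $C^1_{\rm loc}$ $G_d$-equivariant framework of \cite[Section 4.5]{AMS} (necessary because $V_{\rm vert}^{\rm reg}$ carries only a topological structure), I obtain for each $v'$ near $v$ in $V_{\rm vert}^{\rm reg}$ and each sufficiently small $w$ in the fiber of $W_{\rm hor}$ a unique nearby solution $(\rho,u,(e_{\rm vert},w),F)\in V$; this defines a $G_d$-equivariant continuous map $\theta_{\rm hor}$ from a $G_d$-invariant neighborhood $W_{\rm hor}^{\rm reg}$ of the zero section of $W_{\rm hor}|_{V_{\rm vert}^{\rm reg}}$ into $V$, equal to the inclusion $V_{\rm vert}^{\rm reg}\hookrightarrow V$ along the zero section. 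Equivalently, $S_{\rm hor}$ is a topological submersion near $V_{\rm vert}^{\rm reg}$ and $V$ is, near that locus, the total space of a bundle over $V_{\rm vert}^{\rm reg}$.

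Finally I would record the two listed properties. Property~(1): by construction $\theta_{\rm hor}$ restricts to the inclusion $V_{\rm vert}^{\rm reg}\hookrightarrow V$ on the zero section, and it is a homeomorphism onto an open subset by the uniqueness in the implicit function theorem together with invariance of domain (source and target are topological manifolds of the same dimension, by the count above). For the bundle lift, restricting $E = W_{\rm vert}\oplus W_{\rm hor}\oplus \pi_{V/B}^* TB_{k+2,d}\oplus {\mf g}_d$ over $V_{\rm vert}^{\rm reg}$ gives a canonical $G_d$-equivariant identification $E|_{V_{\rm vert}^{\rm reg}}\cong E_{\rm vert}\oplus W_{\rm hor}$; pulling back along $\theta_{\rm hor}$ and choosing a $G_d$-equivariant bundle isomorphism $\pi_{\rm hor}^* E_{\rm vert}\oplus\pi_{\rm hor}^* W_{\rm hor}\xrightarrow{\ \sim\ }\theta_{\rm hor}^* E$ that reduces to this identification over the zero section and carries the $\pi_{\rm hor}^* W_{\rm hor}$-summand onto the $W_{\rm hor}$-summand of $E$ (possible since both bundles deformation-retract onto $V_{\rm vert}^{\rm reg}$ and agree there) produces $\wh\theta_{\rm hor}$. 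Since the $W_{\rm hor}$-component of $S$ at $\theta_{\rm hor}(v',w)$ equals $w$, this gives $\wh\theta_{\rm hor}\circ\tau_{W_{\rm hor}} = S_{\rm hor}\circ\theta_{\rm hor}$, which is property~(2). The hard part is the local product structure: it is exactly the auxiliary-obstruction regularization technology of Abouzaid--McLean--Smith, and the only subtlety beyond a naive smooth implicit function theorem is that one must work with their $C^1_{\rm loc}$ $G$-equivariant gluing rather than in a Banach manifold, since $V_{\rm vert}^{\rm reg}$ has only a topological structure; granting that machinery, the remainder is bookkeeping with the obstruction bundles.
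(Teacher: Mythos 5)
Your argument is correct and establishes the same structure, but it takes a genuinely different route from the paper's proof. The paper stays finite-dimensional: it uses the forgetful map $V \to B_{k+2,d}$, whose fiber $V(\rho)$ is the zero locus of a smooth Fredholm map over the fixed curve $C_\rho$ and hence canonically smooth, notes that $V_{\rm vert}^{\rm reg}$ sits inside these fibers as a submanifold whose fiberwise normal bundle is identified with $W_{\rm hor}$, and builds $\theta_{\rm hor}$ from fiberwise exponential maps of a $G_d$-invariant family of metrics varying continuously in $\rho$ (continuity in $\rho$ substituting for any transverse smoothness); condition (2) is then \emph{not} automatic, and is arranged afterwards by reparametrizing $\theta_{\rm hor}$, using the implicit function theorem to see that $S_{\rm hor}$ restricted to the fibers of $W_{\rm hor}^{\rm reg}$ is a homeomorphism onto a small disk bundle. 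You instead solve the perturbed Cauchy--Riemann equation directly with the $W_{\rm hor}$-component of the obstruction parameter prescribed to be $w$, correcting only in the $(u,e_{\rm vert})$-directions via surjectivity of the vertical linearization; this is an infinite-dimensional implicit-function/gluing argument in the $C^1_{loc}$ equivariant framework of \cite{AMS}, and its payoff is that $S_{\rm hor}(\theta_{\rm hor}(v',w))=w$ on the nose, so condition (2) needs no reparametrization step. What the paper's route buys is that no analytic gluing has to be redone (the fiberwise smooth structures and $T^{\rm vert}V$ are already available from the construction of the chart); what your route buys is that the tautological-section property is transparent by construction.

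Two points in your sketch deserve sharpening, though both are standard bookkeeping at the level of detail of the paper's own proof. First, ``a unique nearby solution'' for fixed $(v',w)$ is literally false, since the level set $S_{\rm hor}^{-1}(w)$ near $V_{\rm vert}^{\rm reg}$ has dimension equal to $\dim V_{\rm vert}^{\rm reg}$; uniqueness must be taken relative to a family of right inverses of $D_{\rm vert}$ chosen continuously and $G_d$-equivariantly in $v'$ (with the correction constrained to its image and with $\rho$ held fixed, so that $\theta_{\rm hor}$ is fiberwise over $B_{k+2,d}$), and the injectivity needed before invoking invariance of domain should be deduced from that same scheme. Second, when you extend the canonical identification $E|_{V_{\rm vert}^{\rm reg}} \cong E_{\rm vert}\oplus W_{\rm hor}$ over the deformation retract to define $\wh\theta_{\rm hor}$, you should fix the identity on the trivial $W_{\rm hor}$-summand, as otherwise your normalization $S_{\rm hor}\circ\theta_{\rm hor}=\tau_{W_{\rm hor}}$ would only give condition (2) up to an automorphism of that summand.
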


\begin{proof}
Consider the forgetful map $V \to B_{k+2, d}$. The fiber of this forgetful map at $\rho \in B_{k+2, d}$, denoted by $V(\rho)$, is the zero locus of a smooth Fredholm map from the pseudoholomorphic graph equation over the smooth or nodal curve $C_\rho$. Hence $V(\rho)$ is naturally a smooth manifold. There is a subset $V_{\rm vert}(\rho) \subset V(\rho)$ which is a submanifold near the regular locus. The normal bundle can be identified with the bundle $W_{\rm hor}$. 

We can choose a $G_d$-invariant family of fiberwise Riemannian metrics on $V(\rho)$ which varies continuously in $\rho$. This is possible as the vertical tangent bundle $T^{\rm vert} V$ exists. Then the map $\theta_{\rm hor}$ can be constructed using the fiberwise exponential map. Indeed, it follows from the construction and the regularity assumption that for any point in $v \in V_{\rm vert}^{\rm reg} \subset V$ over $\rho$, the vertical tangent space of $V$ at this point admits a decomposition $T_v^{\rm vert} V = T_v V_{\rm vert}^{\rm reg} \oplus W_{\rm hor}$, where $T_v V_{\rm vert}^{\rm reg}$ is well-defined due to the regularity of the singular global Kuranishi chart near $v$. Then the fiberwise normal bundle of $V_{\rm vert}^{\rm reg} \subset V$ is identified with $W_{\rm hor} \to V_{\rm vert}^{\rm reg}$. So, for any tangent vector lying in $W_{\rm hor}$ with sufficiently small norm, we can define $\theta_{\rm hor}$ using the exponential map at $v$. By varying over $\rho$, we see that $\theta_{\rm hor}$ defines a homeomorphism onto an open subset which agrees with the embedding $V_{\rm vert} \hookrightarrow V$ along the zero section. As the diagram \eqref{eqn_stabilization_map} commutes along $V_{\rm vert}^{\rm reg}$, one can use parallel transports to construct the bundle map $\wh\theta_{\rm hor}$.

Now by the implicit function theorem, the restriction of $S_{\rm hor}$ to the fibers of $W_{\rm hor}^{\rm reg}$ is a homeomorphism onto a small disk bundle of $W_{\rm hor} \subset E$. By reparametrizing the map $\theta_{\rm hor}$, and accordingly, $\wh\theta_{\rm hor}$, one can ensure that $S_{\rm hor}$ agrees with $\tau_{W_{\rm hor}}$ after being pulled back by $\theta_{\rm hor}$.
\end{proof}

\subsubsection{Smoothing}\label{subsec:smoothing}

We need to construct smooth structures on the base of the Kuranishi chart $V$ which contains a smooth submanifold $V_{\rm vert}^{\rm reg}$. The basic idea is as follows. First, following the argument of \cite{AMS}, by using the topological submersion $V_{\rm vert}^{\rm reg} \to B_{k+2, d}$, one can construct a stable smoothing on $V_{\rm vert}^{\rm reg}$. Then as $V$ is locally a bundle over $V_{\rm vert}^{\rm reg}$, one obtains a natural smooth structure of the thickening. Finally, we use a relative smoothing procedure to obtain a smooth structure on $V$.

The first stage of this construction is similar to the situation of Lemma \ref{lemma_smoothing}. The forgetful map 
\beqn
V_{\rm vert}^{\rm reg} \to B_{k+2, d}
\eeqn
is a $G_d$-equivariant topological submersion with fiberwise smooth structures varying sufficiently regularly, a $C^1_{loc}$ $G$-bundle. This allows us to construct a vector bundle lift of the tangent microbundle $T_\mu V_{\rm vert}^{\rm reg}$. Lashof's theorem\footnote{Note that Lashof's smoothing theorem only requires finitely many orbit types rather than the compactness of the ambient topological manifold.} implies the existence of a stable $G_d$-smoothing, i.e., a smooth structure on $V_{\rm vert}^{\rm reg}\times R$ for some $G_d$-representation $R$. One can indeed include $R$ into the vertical obstruction space $W_{\rm vert}$ such that $R$ is mapped to zero by $\iota_{\rm hor}$. We can then assume $V_{\rm vert}^{\rm reg}$ is a smooth $G_d$-manifold. 

We need the smoothing be compatible with the smooth structures on the moduli space of holomorphic curves in the base $B$. Notice that since the obstruction space $W_{\rm vert}$ is in the vertical direction of the fibration $P \to B$, there is a natural $G_d$-invariant projection map 
\beq\label{eqn:projection-PB}
V_{\rm vert}^{\rm reg} \to {\mc M}{}_{0, k+2}^{\rm reg} (J_B, A)
\eeq
where the codomain has a natural smooth structure. The compatibility of smooth structures is achieved by \cite[Lemma 4.5]{AMS}, which ensures that the above projection map is a smooth submersion by stabilizing the domain. We keep the same notations for the subsequent discussions.

Next, as the bundle $W_{\rm hor} \to V_{\rm vert}^{\rm reg}$ is trivial, it is automatically a smooth $G_d$-equivariant vector bundle. The homeomorphism $\theta_{\rm hor}$ in \eqref{eqn_stabilization_map} then induces a $G_d$-smooth structure on the image of $\theta_{\rm hor}$, which is an open subset of $V$.

We would like to extend the smoothing onto the whole of $V$ using the relative version of Lashof's smoothing theorem proved in \cite[Appendix B]{Bai_Xu_Arnold}. We need to specify a neighborhood of the singular region where we modify the existing smoothing. The region is selected properly in order to compare the GW invariants of the total space $P$ with the GW invariants of the base $B$.

Choose a $G_d$-invariant distance function on the base $V$ of the Kuranishi chart. Choose a $G_d$-invariant distance function on $V$. For any $\epsilon>0$, define
\beqn
V^{{\rm sing} + \epsilon} \subset V
\eeqn
be the open $\epsilon$-neighborhood of $\Psi^{-1}( \ov{\mc M}{}_{0, k+2}^{\rm sing}(J_P, \tilde A))$. Notice that for any $\epsilon>0$, the union 
\beqn
V^\epsilon:= {\rm Im} (\theta_{\rm hor} ) \cup V^{{\rm sing} +\epsilon}
\eeqn
is a $G_d$-invariant open neighborhood of $S^{-1}(0)$ in $V$. One can fix $\epsilon>0$ such that 
\beqn
\pi_{P/B}^{k+2} \Big( \ev_P \big( V^{{\rm sing}+\epsilon} \big) \Big) \cap  \Big( \{p_0\}\times \{p_\infty\}\times \prod_{i=1}^k f_i (W_i)\Big) = \emptyset. 
\eeqn
Then using the relative version of Lashof's theorem \cite[Theorem B.3]{Bai_Xu_Arnold}, one can construct a stable smoothing on $V^\epsilon$ which coincides with the smooth structure on ${\rm Im}(\theta_{\rm hor})$ taking product with the corresponding $G_d$-representation away from $V^{{\rm sing}+\epsilon}$. By absorbing the additional representation of $G_d$ needed for the stable smoothing into the obstruction space $W_{\rm vert}$, one may assume that the stable smoothing is actually a smoothing.

On the other hand, the obstruction bundle can always be identified with a smooth vector bundle. Hence we obtained a smooth global Kuranishi chart for $\ov{\mc M}{}_{0, k+2}(J_P, \tilde A)$, denoted by 
\beqn
K^\epsilon = (G_d, V^\epsilon, E, S, \Psi).
\eeqn

\subsubsection{Wrapping up the proof}
With the above preparations, we can finish the proof.
\begin{proof}[Proof of Proposition \ref{prop_chart_detail}]
    Fixing an $\epsilon > 0$, we can use the global Kuranish chart $K = K^\epsilon$. As discussed in the proof of Proposition \ref{prop:d-chart}, by choosing all the involved stabilizations to be complex, the fact that the vertical tangent spaces can be identified with the index of Cauchy--Riemann operators ensures that we can build an almost complex $K$. As this chart is built from the finite-dimensional approximation $(W, \iota)$ after stabilization and shrinking, it follows from the construction that $K$ is equivalent to an AMS global Kuranishi chart. This proves (1).

    The singular global Kuranishi chart $K_{\rm vert} = (G_d, V_{\rm vert}, E_{\rm vert}, S_{\rm vert}, \psi_{\rm vert})$ for $\ov{\mc M}{}_{0,k+2}(J_P, A)$ from \eqref{eqn:global-vertical} is regular over the open subset $\ov{\mc M}{}_{0,k+2}^{\rm reg}(J_P, A)$. Then the commutativity of \eqref{commutative} and \eqref{eqn:commutative-2}, and the agreement of $S_{\rm hor}$ and $\tau_{W_{\rm hor}}$, follow from \eqref{eqn_stabilization_map}, and the compatibility of smooth structures, in particular, the assertion that $\iota: V_{\rm vert}^{\rm reg} \to V$ is a smooth embedding, follows from the relative smoothing construction in Section \ref{subsec:smoothing}. Because the map $\iota: V_{\rm vert}^{\rm reg} \to V$ covers the identity map on the complex manifold $B_{k+2, d}$, and for each fiber over $\rho \in B_{k+2, d}$, a homotopy of the linearized Cauchy--Riemann operator of an element $v \in V_{\rm vert}^{\rm reg}$ to a complex linear operator can be extended to such a homotopy viewing $v \in V$, the fact that we can choose $W_{\rm hor}$ to be a complex vector space implies that the embedding is compatible with the almost complex structures. This proves (2) and (3).

    As for (4), the submersive property follows from a technical lemma of Abouzaid--McLean--Smith \cite[Lemma 4.5]{AMS}. Indeed, for the projection \eqref{eqn:projection-PB}, upon choosing a further stabilization of the chart $V_{\rm vert}^{\rm reg}$ by the bundle 
    \beqn
    \pi_{P/B}^* T{\mc M}{}_{0, k+2}^{\rm reg}(J_B, A)
    \eeqn
    which admits a $G_d$-invariant smooth structure, we can find a $G_d$-invariant smooth submersive extension of the map $\pi_{P/B}$ to ${\mc M}{}_{0, k+2}^{\rm reg}(J_B, A)$. The commutativity of the diagram follows from the construction. The differences between the restriction of $K_{\rm vert}$ to $V_{{\rm vert}, q_l}^{\rm reg}$ and the global Kuranishi chart arising from the proof of Proposition \ref{prop:d-chart} are
    \begin{itemize}
    \item the (approximations of) the coupling $2$-forms,
    \item the finite dimensional approximation scheme which regularizes the moduli spaces,
    \item the Riemannian metrics required in the definition of the exponential maps on the base.
    \end{itemize}
    Up to equivalence of global Kuranishi chart, these differences can be removed again from the doubly framed curve construction (cf. \cite[Section 6.10]{AMS} and \cite[Section 4.9]{AMS2}). Because of the isotopic uniqueness of stable smoothing, this finishes the proof.
\end{proof}

\subsection{Sign comparison}\label{subsection_signs}

\subsubsection{Linearized operators}

Here we show that the linearization of the nonlinear Cauchy--Riemann equation into the total space $P$ is ``block upper-triangular.'' Consider the fibration $\pi_P: P \to B$. The Hamiltonian connection we have specified provides a decomposition
\beq\label{tangent_bundle_splitting}
TP \cong T^{\rm vert} P \oplus T^{\rm hor} P
\eeq
where $T^{\rm vert} P \subset TP$ is a canonical subbundle and $T^{\rm hor}P$ is isomorphic to the pullback $\pi_P^* TB$. Then, we can choose the almost complex structure $J_P$ to be the form 
\beqn
J_P = J_P^{\rm vert} \oplus \pi_P^* J_B.
\eeqn
To linearize the corresponding $\ov\partial$-operator, one needs to specify a (complex-linear) connection on $TP$ and a way to identify nearby maps (an exponential map); see details in \cite[Section 3]{McDuff_Salamon_2004}. As the almost complex structure $J_P$ respects the splitting \eqref{tangent_bundle_splitting}, one can choose a complex linear connection $\nabla^{TP}$ of the form 
\beqn
\nabla^{TP}:= \nabla^{T^{\rm vert} P} \oplus \pi_P^* \nabla^{TB}.
\eeqn
On the other hand, choose a Riemannian metric on $P$ of the form 
\beqn
g^{TP} = g^{T^{\rm vert} P} \oplus \pi_P^* g^{TB}.
\eeqn
Given any point $x \in P$ and $\xi \in T_x P$, write $\xi = \xi^{\rm vert} + \xi^{\rm hor}$. Then define
\beqn
\wt\exp_x \xi = \exp_{ \exp_x^{\rm vert} ( \xi^{\rm vert})} ( \hat \xi^{\rm hor})
\eeqn
where $\exp^{\rm vert}$ is the fiberwise exponential map with respect to the fiberwise metric $g^{T^{\rm vert} P}$ and the vector $\hat \xi^{\rm hor}$ is the image of $\xi^{\rm hor}$ via the parallel transport along the path $\exp_x^{\rm vert}(t \xi^{\rm vert})$ with respect to the connection $\pi_P^* \nabla^{TB}$. Obviously, $\wt\exp_x$ is a local diffeomorphism. One can see that both the connection and the exponential map preserve fibers. The connection and the exponential map induce the linearization of the Cauchy--Riemann operator. One can see the following lemma is true because the linearized Cauchy--Riemann operator is compatible with the connection.

\begin{lemma}\label{lemma510}
Let $u: \Sigma \to P$ be a smooth map. Then with respect to the horizontal-vertical decomposition \eqref{tangent_bundle_splitting}, the linearized operator $D_u$ is of the form
\beq\label{upper-triangular}
D_u = \left[ \begin{array}{cc} D_u^{\rm vert} & * \\ 0 & D_u^{\rm hor} \end{array}\right].
\eeq
\end{lemma}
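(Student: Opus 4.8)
The plan is to substitute the block-compatible data $J_P = J_P^{\rm vert}\oplus\pi_P^*J_B$, $\nabla^{TP} = \nabla^{T^{\rm vert}P}\oplus\pi_P^*\nabla^{TB}$ and the exponential map $\widetilde\exp$ into the standard expression for the linearized Cauchy--Riemann operator (see \cite[Section 3.1]{McDuff_Salamon_2004}) and to verify directly that this operator preserves the vertical sub-bundle. Concretely, one knows that $D_u\xi$ is the sum of a first-order term of the shape $\tfrac12\big(\nabla\xi + J_P(u)\,\nabla\xi\circ j\big)$ and zeroth-order terms assembled algebraically out of $\xi$, $du$ (equivalently $\partial_{J_P}u$ and $\bar\partial_{J_P}u$), the endomorphism $J_P(u)$, the covariant derivative $\nabla_\xi J_P$, the Nijenhuis tensor $N_{J_P}$ and the torsion of $\nabla^{TP}$. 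The claim to establish is that each of these pieces takes values in $u^*T^{\rm vert}P$ whenever $\xi$ is a section of $u^*T^{\rm vert}P$; granting this, $D_u\xi$ is then a vertical-valued $(0,1)$-form, which is exactly the vanishing of the lower-left entry in \eqref{upper-triangular}, and the two diagonal blocks are the restriction $D_u^{\rm vert}$ of $D_u$ to vertical sections and the induced operator $D_u^{\rm hor}$ on the horizontal quotient $u^*T^{\rm hor}P \cong (\pi_P\circ u)^*TB$ (which, by naturality of the whole construction under $\pi_P$, is just the linearized operator $D_{\pi_P\circ u}$ downstairs).

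The verification I would carry out term by term. For the first-order term this is immediate: $\nabla^{TP}$ is a direct-sum connection, so $\nabla\xi^{\rm vert}$ has values in $T^{\rm vert}P$, and $J_P$ preserves the splitting \eqref{tangent_bundle_splitting}, so the $(0,1)$-projection is still vertical-valued. For the zeroth-order terms the only real input is that vertical vectors lie in $\ker d\pi_P$: since $\pi_P$ is $(J_P, J_B)$-holomorphic, the Nijenhuis tensor is natural under this submersion, so $d\pi_P\big(N_{J_P}(\xi^{\rm vert}, -)\big) = N_{J_B}\big(d\pi_P\xi^{\rm vert}, -\big) = 0$ and hence $N_{J_P}(\xi^{\rm vert}, -)$ is vertical; likewise the horizontal block of the endomorphism $\nabla_{\xi^{\rm vert}}J_P$ is $\pi_P^*\big(\nabla^{TB}_{d\pi_P\xi^{\rm vert}}J_B\big) = 0$, so $\nabla_{\xi^{\rm vert}}J_P$ is vertical-valued; and the same computation $d\pi_P\big(T(\xi^{\rm vert}, -)\big) = 0$ handles the torsion term. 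Applying the block-diagonal endomorphism $J_P(u)$ to these vertical outputs keeps them vertical, which completes the argument.

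The step I expect to require the most care is bookkeeping rather than geometry: one must pin down which reference connection and exponential-map conventions the formula for $D_u$ is written in (the Hermitianization $\nabla - \tfrac12 J(\nabla J)$ used in \cite{McDuff_Salamon_2004}, say) and confirm that the two structural facts it relies on — that $\nabla^{TP}$ is the direct sum of the vertical connection and the $\pi_P$-pullback connection, and that $\pi_P$ intertwines $J_P$ and $J_B$ — already force every pullback tensor occupying the vertical slot to contribute only in the vertical direction. Once this is set up the block form \eqref{upper-triangular} drops out, and the off-diagonal entry $*$ is accounted for by the horizontal-to-vertical coupling (through $\nabla_{\xi^{\rm hor}}J_P^{\rm vert}$ and the curvature of the Hamiltonian connection), which need not vanish; only the \emph{lower}-left block is forced to be zero.
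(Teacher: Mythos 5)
Your argument is correct: the two structural inputs you isolate (block-diagonality of $J_P$ and of $\nabla^{TP}$, together with $(J_P,J_B)$-holomorphicity of $\pi_P$) do force every term in the standard expression for $D_u$ to send vertical sections to vertical-valued $(0,1)$-forms, and your term-by-term checks (pullback connection kills $\nabla_{\xi^{\rm vert}}(\pi_P^*J_B)$, naturality of the Nijenhuis tensor under the holomorphic submersion, verticality of the torsion contraction with a vertical vector) are all sound. The route is, however, not quite the one the paper takes. The paper does not expand $D_u$ into the McDuff--Salamon local formula at all: it \emph{defines} the linearization via the fiber-preserving exponential map $\wt\exp$ and $\nabla^{TP}$-parallel transport $\Phi_t$, dismisses Lemma \ref{lemma510} in one sentence (``compatible with the connection''), and reserves the explicit computation for Lemma \ref{lemma511}, where the same parallel-transport definition is unwound for horizontal $\xi$. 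In that formulation the vanishing of the lower-left block is almost immediate: for vertical $\xi$ the path $t\mapsto\wt\exp_{u(z)}(t\xi^{\rm vert})$ stays in a single fiber, $\Phi_t$ preserves the splitting, and the horizontal part of $d_z\gamma(z,t)$ is constant in $t$ under the identification $T^{\rm hor}P\cong\pi_P^*TB$, so its $t$-derivative is zero. What your approach buys is independence from the particular trivialization: the block structure is read off from naturality of the tensors entering the universal formula. What it costs is exactly the bookkeeping you flag at the end --- since the paper's $D_u$ is built from $\wt\exp$ and $\nabla^{TP}$ rather than the Hermitian-connection setup of \cite[Section 3]{McDuff_Salamon_2004}, you must either check that the zeroth-order discrepancy terms coming from this choice of exponential map also respect the splitting (they do, precisely because $\wt\exp$ preserves fibers and is built from the direct-sum data), or run your verification directly on the paper's definition, which is in effect what Lemma \ref{lemma511} does for the other block.
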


If we only deform in the fiber direction, then it is easy to see that the $D_u^{\rm vert}$ coincides with the linearization of the equation for pseudoholomorphic graphs. Moreover, we can verify that the horizontal part is the same as the linearized operator on the base.

\begin{lemma}\label{lemma511}
Denote $\uds u:= \pi_P\circ u: \Sigma \to B$ and let $D_{\uds u}: \Gamma( \uds u^* TB) \to \Omega^{0,1}(\uds u^* TB)$ be the linearization corresponding coming from the $J_B$-linear connection $\nabla^{TB}$. Then one has 
\beqn
D_u^{\rm hor} = \pi_P^* D_{\uds u}.
\eeqn
\end{lemma}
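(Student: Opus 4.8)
The plan is to invoke the explicit description of the linearized Cauchy--Riemann operator from \cite[Section 3]{McDuff_Salamon_2004} and track how each term interacts with the splitting \eqref{tangent_bundle_splitting}. Recall that, for a $J_P$-linear connection $\nabla^{TP}$ and the exponential-type map $\wt\exp$ chosen above, the linearization at $u:\Sigma\to P$ takes the form
\[
D_u\xi = \tfrac12\Big(\nabla\xi + J_P(u)\,\nabla\xi\circ j_\Sigma\Big) - \tfrac12 J_P(u)\big(\nabla_\xi J_P\big)(u)\,\partial_{J_P}(u),\qquad \xi\in\Gamma(u^*TP),
\]
where $\partial_{J_P}(u)=\tfrac12(du - J_P(u)\,du\circ j_\Sigma)$ and $\nabla$ denotes the pullback of $\nabla^{TP}$ (and of its induced connection on $\mathrm{End}\,TP$). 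By Lemma \ref{lemma510} the operator is block upper-triangular, so to identify the diagonal entry $D_u^{\rm hor}$ it suffices to compute $(D_u\xi^{\rm hor})^{\rm hor}$ for a horizontal section $\xi^{\rm hor}\in\Gamma(u^*T^{\rm hor}P)$, which under the canonical isomorphism $T^{\rm hor}P\cong\pi_P^*TB$ is exactly a section of $\uds u^*TB$.

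Next I would record the relevant compatibilities. Because $\nabla^{TP}=\nabla^{T^{\rm vert}P}\oplus\pi_P^*\nabla^{TB}$ is block diagonal for \eqref{tangent_bundle_splitting}, the pullback $u^*\nabla^{TP}$ preserves $u^*T^{\rm hor}P$ and, under the identification $u^*T^{\rm hor}P\cong\uds u^*TB$, is carried to $\uds u^*\nabla^{TB}$; likewise $J_P$ restricts to $\pi_P^*J_B$ on $T^{\rm hor}P$, and the induced connection on $\mathrm{End}\,TP$ is block diagonal, so $(\nabla_{\xi^{\rm hor}}J_P)$ acts on horizontal vectors as $\pi_P^*\big(\nabla^{TB}_{d\pi_P\xi^{\rm hor}}J_B\big)$. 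Finally $d\pi_P\circ du=d\uds u$ and $d\pi_P$ commutes with $J$, so the horizontal component of $\partial_{J_P}(u)$ is $\partial_{J_B}(\uds u)$, while $d\pi_P(\xi^{\rm hor})$ is simply $\xi^{\rm hor}$ regarded as a section of $\uds u^*TB$.

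Assembling these identifications term by term, applying the horizontal projection to the displayed formula gives
\[
(D_u\xi^{\rm hor})^{\rm hor} = \tfrac12\big(\nabla\xi^{\rm hor} + J_B(\uds u)\,\nabla\xi^{\rm hor}\circ j_\Sigma\big) - \tfrac12 J_B(\uds u)\big(\nabla_{\xi^{\rm hor}}J_B\big)(\uds u)\,\partial_{J_B}(\uds u) = D_{\uds u}\xi^{\rm hor},
\]
where $\nabla$ on the middle term is now $\uds u^*\nabla^{TB}$; the right-hand side is precisely the operator $D_{\uds u}$ associated to $\nabla^{TB}$, so $D_u^{\rm hor}=\pi_P^*D_{\uds u}$. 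One must still check that the $D_{\uds u}$ appearing here is the one built from $\nabla^{TB}$ with \emph{its} own exponential map: this is automatic from the construction of $\wt\exp$, whose horizontal part was defined by parallel transport with respect to $\pi_P^*\nabla^{TB}$ and therefore descends under $\pi_P$ to the geodesic identification downstairs.

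The main obstacle I anticipate is exactly this last point: one has to make sure that the \emph{nonlinear} identification of nearby maps --- not merely the linear algebra of $J_P$ and $\nabla^{TP}$ --- is compatible with $\pi_P$, so that the zeroth-order term really is $\pi_P^*$ of the downstairs zeroth-order term and no horizontal-to-horizontal contribution is absorbed into a covariant derivative of a horizontal field taken in a mixed direction. Granting Lemma \ref{lemma510}, which already confines the curvature of the Hamiltonian connection to the off-diagonal entry $*$ of \eqref{upper-triangular}, this reduces to the bookkeeping above; it is the step where the specific choices of $\nabla^{TP}$, $g^{TP}$ and $\wt\exp$ --- all respecting \eqref{tangent_bundle_splitting} and restricting to pullbacks of the data on $B$ --- are genuinely used.
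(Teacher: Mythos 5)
Your structural idea coincides with the paper's: every piece of auxiliary data ($J_P$, $\nabla^{TP}$, $g^{TP}$, $\wt\exp$) is block-compatible with the splitting \eqref{tangent_bundle_splitting} and restricts on the horizontal summand to the pullback of the corresponding datum on $B$, so the horizontal diagonal block of $D_u$ must be $\pi_P^*D_{\uds u}$. The paper implements this without ever invoking a local formula: it takes the defining expression $D_u^{\rm hor}(\xi^{\rm hor})={\rm Proj}_{T^{\rm hor}P}\big(\tfrac{d}{dt}\big|_{t=0}\Phi_t(\ov\partial_{J_P}\gamma(z,t))\big)$ with $\gamma(z,t)=\exp_{u(z)}(t\xi^{\rm hor})$ and $\Phi_t$ the $\nabla^{TP}$-parallel transport, and pushes the horizontal projection through each step, using that $\Phi_t$ is complex linear, that it preserves the horizontal--vertical decomposition, that the horizontal component of $d_z\gamma$ is the horizontal lift of the differential of the projected family downstairs, and that $\nabla^{T^{\rm hor}P}=\pi_P^*\nabla^{TB}$.

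The weak point of your version is the opening ``recall.'' The displayed formula, with the zeroth-order term $-\tfrac12 J_P(\nabla_\xi J_P)\partial_{J_P}(u)$, is McDuff--Salamon's expression for the linearization relative to the Levi-Civita connection of the metric attached to $(\omega,J_P)$ and its associated identifications; it is not the operator determined by the choices made here, which the paper's own proof pins down as the derivative of $\Phi_t(\ov\partial_{J_P}\gamma(z,t))$ with $\Phi_t$ the $\nabla^{TP}$-parallel transport. For the $J_P$-complex-linear connection one has $\nabla_\xi J_P\equiv 0$, and the correct local expression is instead $D_u\xi=\big(\nabla\xi+T^{\nabla^{TP}}(\xi,du)\big)^{0,1}$, where the torsion $T^{\nabla^{TP}}$ of the direct-sum connection does not vanish (its vertical component on horizontal vectors is, up to sign, the curvature of the Hamiltonian connection, which is exactly what populates the off-diagonal entry $*$ of \eqref{upper-triangular}). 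So your term-by-term check is performed on a different operator; the conclusion survives because the horizontal projection of $T^{\nabla^{TP}}(\xi^{\rm hor},du)$ is the pullback of $T^{\nabla^{TB}}(\xi^{\rm hor},d\uds u)$, but that term must be verified and is absent from your computation. Relatedly, the issue you single out as the main obstacle --- compatibility of the nonlinear identification $\wt\exp$ with $\pi_P$ --- carries no real content at the linearized level: with the parallel-transport trivialization, $D_u$ depends only on $\tfrac{d}{dt}\big|_{t=0}\wt\exp_x(t\xi)=\xi$, i.e.\ on the first-order jet of the identification, so no property of horizontal geodesics is actually needed. If you wish to keep a formula-based argument, use the connection-plus-torsion expression above and check the torsion term as well; otherwise the cleanest repair is the paper's computation, which projects the defining expression directly.
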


\begin{proof}
Let $\xi^{\rm hor}$ be a vector field along $u$ taking values in $T^{\rm hor} P$. For $t \in [0, 1]$, let $\gamma(z, t) = \exp_{u(z)} (t \xi^{\rm hor})$ be the family of maps from $\Sigma$ to $P$. Let 
\beqn
\Phi_t: T_{\gamma(z, t)} P \to T_{u(z)} P
\eeqn
be the parallel transport along the curve $\gamma(z, t)$ with respect to the chosen connection $\nabla^{TP}$. Then by the definition of the linearized operator and the properties of the connection (and hence the associated parallel transport)
\beqn
\begin{split}
&\ D_u^{\rm hor}(\xi^{\rm hor})\\
({\rm Definition\ of\ }D_u )\ = &\ {\rm Proj}_{T^{\rm hor} P} \left(  \left. \frac{d}{dt} \right|_{t = 0}   \Phi_t \left( \ov\partial_{J_P} \gamma(z, t) \right) \right)\\
(\Phi_t\ {\rm is\ complex\ linear})\ = &\ \left( {\rm Proj}_{T^{\rm hor}P} \left( \left. \frac{d}{dt} \right|_{ t= 0} \Phi_t ( d_z \gamma(z, t) ) \right) \right)^{0,1}\\
({\rm horizontal}-{\rm vertical\ decomposition})\ = &\ \left( {\rm Proj}_{T^{\rm hor}P} \left( \left. \frac{d}{dt} \right|_{ t= 0} \Phi_t ( d_z^{\rm hor} \gamma(z, t) + d_z^{\rm vert} \gamma(z, t) ) \right) \right)^{0,1}\\
(\Phi_t\ {\rm preserves\ the\ decomposition})\ = &\ \left( \left. \frac{d}{dt} \right|_{ t= 0} \Phi_t ( d_z^{\rm hor} \gamma(z, t) ) \right)^{0,1}\\
({\rm property\ of\ }d_z^{\rm hor})\ = &\ \left( \left. \frac{d}{dt} \right|_{t=0} \Phi_t \left(  d_z (\exp_{\uds u(z)}(t \xi^{\rm hor}))  \right)^{\rm hor} \right)^{0,1}\\
(\nabla^{T^{\rm hor} P} = \pi_P^* \nabla^{TB})\ = &\ D_{\uds u}(\xi^{\rm hor}).
\end{split}
\eeqn
Here in the second last line, $(d_z (\exp_{\uds u(z)}(t \xi^{\rm hor})))^{\rm hor}$ is the horizontal lift. 
\end{proof}

\subsubsection{Sign verification}

Now we turn to the sign verification. Abbreviate ${\mc M} = {\mc M}{}_{0, k+2}^{\rm reg}(J_B, A)$ and $\tilde {\mc M} = \ov{\mc M}{}_{0, k+2}^{\rm reg}(J_P, \tilde A)$. Identify the domain of the pseudocycles $f_i: W_i \to B$ resp. $\tilde f_i: \tilde W_i \to P$ with their images. Suppose 
\beqn
\tilde q \in \ev_P^{-1} \left( \tilde W_0 \times \tilde W_\infty \times \prod_{i=1}^k \tilde W_i \right) \subset \tilde {\mc M}
\eeqn
which projects down to the points
\beqn
q \in \ev_B^{-1} \left( \{p_0 \}\times \{p_\infty\}\times \prod_{i=1}^k W_i \right) \subset {\mc M}.
\eeqn
If $q$ is represented by a smooth $J_B$-holomorphic map $u: S^2 \to B$ with $k$ markings ${\bf z}$, then $\tilde q$ corresponds to an element in the graph moduli 
\beqn
\tilde {\mc M}^{\rm vert}:= \ov{\mc M}{}_{0, 2; {\bf z}}^{\rm graph}(u^* P, J_P|_{u^* P}, \tilde A).
\eeqn

To identify the signs, it suffices to consider the case that all moduli spaces are transverse. In this case, the tangent spaces are kernels of the corresponding linearized operators. Then by Lemma \ref{lemma510} and Lemma \ref{lemma511}, there is a natural exact sequence
\beqn
\xymatrix{ 0 \ar[r] & T_{\tilde q}\tilde {\mc M}^{\rm vert} \ar[r] &  T_{\tilde q}\tilde {\mc M} \ar[r] & T_q {\mc M} 
\ar[r]  & 0}.
\eeqn
As the off-diagonal term in \eqref{upper-triangular} can be turned off in a one-parameter family, one has a canonical orientation-preserving isomorphism
\beqn
\det T_{\tilde q} \tilde {\mc M} \cong \det T_{\tilde q} \tilde {\mc M}^{\rm vert} \otimes \det T_q {\mc M}.
\eeqn
Furthermore, let us consider the differential of the evaluation maps which are isomorphisms onto corresponding normal spaces to the constraints, which are abbreviated using $N$. Then there is a commutative diagram
\beqn
\xymatrix{ 0 \ar[r] & T_{\tilde q}\tilde {\mc M}^{\rm vert} \ar[r] \ar[d]_{D_{\tilde q} \ev^{\rm vert}} &  T_{\tilde q}\tilde {\mc M} \ar[r] \ar[d]_{D_{\tilde q} \ev_P}  & T_q {\mc M}  \ar[d]^{D_q \ev_B}
\ar[r]  & 0 \\
 0 \ar[r] &     \tilde N^{\rm vert} \ar[r] & \tilde N \ar[r] & N  \ar[r] & 0 }.
\eeqn
By taking determinants, one obtains a commutative diagram
\beqn
\vcenter{ \xymatrix{  \det T_{\tilde q} \tilde {\mc M} \ar[rrr]^{\det (D_{\tilde q} \ev_P ) }  \ar[d] &  & &  \det \tilde N \ar[d]  &  \\
 \det T_{\tilde q} \tilde {\mc M}^{\rm vert} \otimes \det T_q {\mc M} \ar[rrr]_{\det(D_{\tilde q} \ev^{\rm vert}) \otimes \det (D_q \ev_B) } & & &  \det \tilde N^{\rm vert} \otimes \det N } }
\eeqn
where vertical arrows are orientation preserving. Now the top horizontal arrow has a sign ${\rm sign}(\tilde q)$ contributing to the GW invariant of $P$, the bottom arrow has a sign ${\rm sign}(\tilde q^{\rm vert})\cdot {\rm sign}(q)$ with the factors contributing to the graph GW invariant and the GW invariant of the base. It follows that 
\beqn
{\rm sign}(\tilde q) =  {\rm sign}(\tilde q^{\rm vert} ) \cdot {\rm sign}(q).
\eeqn

\section{A fibration which does not split}\label{section6}

The cohomological splitting stated in Theorem \ref{thm:split} and Corollary \ref{cor12} potentially fails for certain characteristics, if we drop the the non-divisibility assumption on the Gromov--Witten invariant. Here we provide such an example. 

\begin{thm}\label{thm61}
There exists a smooth Fano variety $B$ (hence rationally connected) and a smooth projective family $M \hookrightarrow P \to B$ such that 
\beqn
H^*(P; {\mb F}_2) \neq H^*(B; {\mb F}_2) \otimes H^*(M; {\mb F}_2).
\eeqn
\end{thm}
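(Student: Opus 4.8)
The plan is to exhibit the example as a conic bundle, i.e.\ as a Brauer--Severi scheme of relative dimension one, over a carefully chosen Fano variety, and to detect the failure of splitting through the mod $2$ Euler class of the associated $S^2$-bundle. First I would recall the dictionary: a smooth projective morphism $P\to B$ all of whose fibers are smooth conics is classified by a class $\alpha\in\mathrm{Br}(B)$, and topologically it is an oriented $S^2$-bundle with structure group $PU(2)\cong SO(3)$; writing $E\to B$ for the associated oriented real rank $3$ bundle, one has $P\cong S(E)$. Since $B$ is Fano it is rationally connected, hence simply connected, so (arguing as in the proof of Corollary \ref{cor12}, via \cite[Lemma 6.2]{McDuff_Salamon_1998} and \cite[Theorem 1.1]{Lalonde_McDuff_2003}) $P\to B$ is even a Hamiltonian fibration with fiber $M=\mathbb{CP}^1$; in any case it is the required smooth projective family.

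The topological core is then a Gysin argument. The mod $2$ Gysin sequence of the sphere bundle $S(E)\to B$ reads
\beqn
\cdots \to H^{k-3}(B;\mathbb{F}_2)\xrightarrow{\cup w_3(E)} H^{k}(B;\mathbb{F}_2)\xrightarrow{\pi^*} H^{k}(P;\mathbb{F}_2)\xrightarrow{\pi_*} H^{k-2}(B;\mathbb{F}_2)\xrightarrow{\cup w_3(E)}\cdots,
\eeqn
because the mod $2$ reduction of the Euler class of the oriented rank $3$ bundle $E$ is $w_3(E)$. Breaking this into short exact sequences and adding dimensions gives
\beqn
\dim_{\mathbb{F}_2}H^*(P;\mathbb{F}_2)=2\dim_{\mathbb{F}_2}H^*(B;\mathbb{F}_2)-2\,\mathrm{rk}\big(\cup w_3(E)\colon H^*(B;\mathbb{F}_2)\to H^{*+3}(B;\mathbb{F}_2)\big),
\eeqn
so $H^*(P;\mathbb{F}_2)\cong H^*(B;\mathbb{F}_2)\otimes H^*(M;\mathbb{F}_2)$ as graded $\mathbb{F}_2$-vector spaces if and only if $w_3(E)=0$ (if $w_3(E)\neq0$ then $1\mapsto w_3(E)$ already forces positive rank). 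Finally, as $E$ is oriented, the Wu formula gives $w_3(E)=\mathrm{Sq}^1 w_2(E)$, which is the mod $2$ reduction of the integral Bockstein $\beta\, w_2(E)\in H^3(B;\mathbb{Z})$; this integral class is exactly the obstruction to lifting the structure group $PU(2)$ to $U(2)$, i.e.\ the image of the order-two Brauer class $\alpha$ under $\mathrm{Br}(B)\to H^3(B;\mathbb{Z})$. Hence it suffices to arrange that this class is nonzero modulo $2$, for instance that $H^3(B;\mathbb{Z})$ contains an element of order exactly two that is not divisible by two.

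The remaining --- and principal --- difficulty is purely geometric: producing a smooth Fano variety $B$ carrying such a non-divisible $2$-torsion class in $H^3(-;\mathbb{Z})$ (equivalently, since $B$ is rationally connected so that $H^2(\mathcal{O}_B)=0$ and $\mathrm{Br}(B)\cong H^3(B;\mathbb{Z})_{\mathrm{tors}}$, a nonzero $2$-torsion Brauer class of index two). I would take the example furnished by Kuznetsov (see the Acknowledgements), verify that it is Fano and that the distinguished torsion class is not divisible by two, and let $P\to B$ be the Brauer--Severi $\mathbb{CP}^1$-bundle attached to a generator of the corresponding $\mathbb{Z}/2\subset\mathrm{Br}(B)$. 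Combining this with the computation above yields
\beqn
\dim_{\mathbb{F}_2}H^*(P;\mathbb{F}_2)<2\dim_{\mathbb{F}_2}H^*(B;\mathbb{F}_2)=\dim_{\mathbb{F}_2}\big(H^*(B;\mathbb{F}_2)\otimes H^*(M;\mathbb{F}_2)\big),
\eeqn
which is the assertion of the theorem.

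I would close with two contextual remarks. Such a $B$ is necessarily not stably rational, since the Brauer group is a stable birational invariant and vanishes on projective space; thus Theorem \ref{thm61} is consistent with Theorem \ref{thm:B} and shows that the rationality-type hypotheses there cannot simply be dropped. Moreover, since the splitting fails in characteristic $2$, the divisibility hypothesis of Theorem \ref{thm:split} must genuinely break down for $B$, which is why one expects a genus zero two-point Gromov--Witten invariant of $B$ equal to $2$; that expectation is not needed for the proof of Theorem \ref{thm61}, and indeed the fact that the obstruction sits in $H^3(B;\mathbb{Z})$ is precisely what makes such a Gromov--Witten computation delicate, so I would record it only as a conjectural complement.
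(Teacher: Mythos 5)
Your topological detection argument is sound, and it is in essence the alternative route the paper itself sketches in Remark \ref{rem:rennemo}: for a Brauer--Severi $\mathbb{P}^1$-bundle the failure of mod $2$ splitting is governed by the class in $H^3(B;\mathbb{Z})$ attached to the Brauer class (your Gysin computation with $w_3(E)=\mathrm{Sq}^1 w_2(E)$ is a correct way to say that $d_3$ of the fiber generator is this class, so non-splitting holds exactly when it is nonzero mod $2$). Note, however, that the paper's actual proof of Theorem \ref{thm61} takes a different construction: it forms the flag bundle $Y_n = P_n\times_G F$ associated to the Ottem--Rennemo $GO(4)^\circ$-bundles over their Fano approximations $X_n$ of $BG$, and detects non-splitting by Ekedahl's argument that $Y_\infty\simeq BT$ has torsion-free cohomology while $H^3(X_n;\mathbb{Z})\cong H^3(BG;\mathbb{Z})=\mathbb{Z}/2$; that route needs no Brauer class, no conic bundle, and no period--index input.

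The genuine gap in your proposal is precisely the existence statement you defer: you never produce a smooth Fano $B$ carrying a $2$-torsion class in $H^3(B;\mathbb{Z})$ that is nonzero mod $2$ \emph{and} is realized by a Brauer class of index two, so that the associated Brauer--Severi scheme really is a $\mathbb{P}^1$-bundle. Period does not bound index in dimension $\geq 3$, so ``order two in $\mathrm{Br}(B)$'' does not by itself give a conic bundle, and the appeal to an unspecified ``example furnished by Kuznetsov'' does not close this. The paper fills exactly this hole with \cite{Ottem_Rennemo_2024}: the varieties $X_n$ are Fano (their Theorem 4.1), satisfy $H^3(X_n;\mathbb{Z})=\mathbb{Z}/2$ (their Corollary 3.6), so the generator is automatically non-divisible, and their Appendix A.8 exhibits a $\mathbb{P}^1$-bundle whose Brauer class is this generator. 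If you import that input (or switch to the flag-bundle construction, which avoids the index question altogether), your argument becomes a complete proof; as written, the crucial geometric example is asserted rather than established.
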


We first collect some relevant topological arguments from \cite{Ekedahl_2009}. 

\begin{defn}
Let $G$ be a complex reductive Lie group. A topological space $X_n$ is called an $n$-th cohomological approximation of the classifying space $BG$ if there is a map $X_n \to BG$ inducing an isomorphism
\beqn
H^j(BG; {\mb Z}) \to H^j(X_n; {\mb Z})
\eeqn
for all $j \leq n$.
\end{defn}

Following the argument of \cite[Proposition 2.1]{Ekedahl_2009}, one has the following non-splitting result.

\begin{lemma}
Let $G_{\rm Borel} \subset G$ be a Borel subgroup and $F:= G/G_{\rm Borel}$ the flag variety. Suppose $H^j(BG; {\mb Z})$ has $p$-torsion for some $j$. Then for $n$ sufficiently large and any $n$-th cohomological approximation $X_n$ of $BG$, if $P_n \to X_n$ is the pullback $G$-bundle and $Y_n:= P_n\times_G F$, then 
\beqn
H^*(Y_n; {\mb F}_p) \neq H^*(F; {\mb F}_p ) \otimes H^*(X_n; {\mb F}_p ).
\eeqn
\end{lemma}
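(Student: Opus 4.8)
The plan is to reduce this statement to the classical fact that if $H^*(BG;\mathbb{Z})$ has $p$-torsion, then the cohomology of a generic flag bundle does not split over $\mathbb{F}_p$, and then transfer that statement from the infinite-dimensional classifying space to a finite-dimensional cohomological approximation. Recall the Leray--Hirsch/Borel picture: for the universal flag bundle $EG \times_G F \to BG$, the $E_2$-page of the Leray--Serre spectral sequence is $H^*(BG;\mathbb{F}_p) \otimes H^*(F;\mathbb{F}_p)$, and splitting over $\mathbb{F}_p$ is equivalent to $E_2$-degeneration together with the vanishing of all extension problems, which (as the fiber inclusion realizes $F$ via Schubert classes) amounts to the restriction map $H^*(EG\times_G F;\mathbb{F}_p) \to H^*(F;\mathbb{F}_p)$ being surjective. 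Now $EG \times_G F = BG_{\mathrm{Borel}}$ since $F = G/G_{\mathrm{Borel}}$ and the associated bundle for the universal $G$-bundle is exactly the classifying space of the Borel subgroup. So the splitting question for the universal flag bundle is literally the question of whether $H^*(BG_{\mathrm{Borel}};\mathbb{F}_p) \cong H^*(BG;\mathbb{F}_p)\otimes H^*(F;\mathbb{F}_p)$. By a theorem going back to Borel (and exploited in \cite[Proposition 2.1]{Ekedahl_2009}), the presence of $p$-torsion in $H^*(BG;\mathbb{Z})$ is precisely what obstructs this; more concretely, $H^*(BG;\mathbb{F}_p)$ is not a free module over the image of $H^*(BG_{\mathrm{Borel}};\mathbb{F}_p)\to\ldots$ and one detects the failure in a specific degree $j+c$ where $c$ is determined by $F$.

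First I would fix the degree: by hypothesis $H^{j_0}(BG;\mathbb{Z})$ has $p$-torsion for some $j_0$; let $N$ be large enough (compared to $j_0$ and $\dim F$) that an $N$-th cohomological approximation $X_n$ captures all the cohomology of $BG$ relevant to the obstruction, and similarly captures the cohomology of $BG_{\mathrm{Borel}}$ in the relevant range — this is possible because $X_n \to BG$ pulls back the bundle $EG\times_G F \to BG$ to $Y_n \to X_n$, and the map $Y_n \to EG\times_G F = BG_{\mathrm{Borel}}$ is then an $N$-th cohomological approximation of $BG_{\mathrm{Borel}}$ in the appropriate range (this uses that $F$ is a compact manifold, so the Leray--Serre spectral sequences of $Y_n\to X_n$ and $BG_{\mathrm{Borel}}\to BG$ agree through a range of degrees determined by $N$ and $\dim F$). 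Second, I would invoke the classical non-splitting for the universal flag bundle: $H^*(BG_{\mathrm{Borel}};\mathbb{F}_p) \neq H^*(F;\mathbb{F}_p)\otimes H^*(BG;\mathbb{F}_p)$ as graded $\mathbb{F}_p$-vector spaces, with the discrepancy occurring in a degree $d_0 \leq j_0 + \dim F$. Third, I would transport this discrepancy down to the finite model: since all the spaces and maps in play agree with the universal ones through degree $\geq d_0$, we get
\beqn
H^{d_0}(Y_n;\mathbb{F}_p) \neq \big(H^*(F;\mathbb{F}_p)\otimes H^*(X_n;\mathbb{F}_p)\big)^{d_0},
\eeqn
which gives the claimed inequality $H^*(Y_n;\mathbb{F}_p)\neq H^*(F;\mathbb{F}_p)\otimes H^*(X_n;\mathbb{F}_p)$.

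The main obstacle is making the ``sufficiently large $n$'' and ``relevant range of degrees'' bookkeeping precise and airtight. One has to be careful that an $N$-th cohomological approximation controls $H^j(BG)$ only for $j\leq N$, but the splitting statement for $Y_n$ involves the full cohomology ring of $Y_n$, whose top degree grows with $n$; so the argument must localize the failure of splitting to a fixed bounded range of degrees (depending only on $j_0$ and $\dim F$, not on $n$) and then only claim agreement of the two sides in that fixed range. Concretely, one needs: (i) a bound $d_0 \le j_0 + \dim F$ on where the universal flag bundle fails to split; (ii) the fact that the Leray--Serre spectral sequence (with $\mathbb{F}_p$ coefficients) of $Y_n \to X_n$ has the same $E_2$-page and differentials as that of $BG_{\mathrm{Borel}} \to BG$ in total degree $\leq d_0$ once $n > d_0$, because the base map $X_n \to BG$ is an iso on $H^{\leq d_0}$ and the fiber is unchanged; and (iii) that $E_\infty$-degeneration plus extension data in this range is faithfully reflected. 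Granting the classical statement cited from \cite{Ekedahl_2009}, steps (ii) and (iii) are the genuinely load-bearing ones, and I would carry them out via a standard comparison-of-spectral-sequences / five-lemma-in-a-range argument; there is no deep geometric input, just a careful finite-range truncation. A secondary subtlety is that "$n$-th cohomological approximation" as defined uses $\mathbb{Z}$ coefficients, so I would first note that an $N$-th $\mathbb{Z}$-cohomological approximation is automatically an $N$-th $\mathbb{F}_p$-cohomological approximation (in fact one must be slightly careful about degree $N$ versus $N-1$ because of universal coefficients, which is why one takes $n = N$ strictly larger than the range one actually needs).
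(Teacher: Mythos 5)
Your proposal is correct and follows essentially the same route as the paper: both identify the universal total space $EG\times_G F$ with $BG_{\rm Borel}\simeq BT$ and transfer the failure of splitting from the universal flag bundle to $Y_n\to X_n$ by comparing cohomology through a fixed range of degrees (bounded in terms of the torsion degree), which is exactly the paper's assertion that $H^j(Y_n;\mathbb{Z})\cong H^j(Y_\infty;\mathbb{Z})$ for $n$ large. The only real difference is that where you cite the universal mod-$p$ non-splitting (Borel/Ekedahl) and assert the degree bound $d_0\le j_0+\dim F$, the paper derives both at once from the elementary observation that $H^*(BT;\mathbb{Z})$ is a torsion-free polynomial ring while $H^*(X_n;\mathbb{Z})\cong H^*(BG;\mathbb{Z})$ has $p$-torsion in degree $j_0$, with the passage to $\mathbb{F}_p$ handled by the universal coefficient theorem.
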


\begin{proof}
Consider the universal $F$-fibration $Y_\infty:= EG \times_G F$. When $n$ is sufficiently large, the natural map $Y_n \to Y_\infty$ induces an isomorphism
\beqn
H^j( Y_\infty; {\mb Z}) \cong H^j(Y_n; {\mb Z}).
\eeqn
On the other hand, let $T \subset G_{\rm Borel}$ be the maximal torus. One has
\beqn
Y_\infty = EG\times_G F \cong EG/G_{\rm Borel} \simeq B G_{\rm Borel} \simeq BT
\eeqn
whose cohomology is a polynomial algebra and has no torsion. However, $H^j(X_n; {\mb Z}) \cong H^j(BG; {\mb Z})$ has $p$-torsion. Hence 
\beqn
H^*( Y_n; {\mb Z} ) \neq H^*( F; {\mb Z}) \otimes H^*(X_n; {\mb Z})
\eeqn
as the right hand side has $p$-torsion. The conclusion follows from the universal coefficients theorem.
\end{proof}

\emph{Proof of Theorem 6.1:} Ottem--Rennemo \cite{Ottem_Rennemo_2024} constructed a sequence of cohomological approximations of the classifying spaces for a reductive group $G = GO(4)^\circ$ by a sequence of smooth projective varieties $X_n$ (see \cite[Proposition 3.5]{Ottem_Rennemo_2024}) with algebraic $G$-bundles $P_n \to X_n$. Their computation (\cite[Corollary 3.6]{Ottem_Rennemo_2024}) shows $H^3(BG; {\mb Z}) = {\mb Z}/2$. On the other hand, by \cite[Theorem 4.1]{Ottem_Rennemo_2024}, $X_n$ is Fano. Forming the associated $F=G/G_{\operatorname{Borel}}$ bundle gives a  fibration $Y_n \to X_n$ with fiber the flag variety $F$. To see that that $Y_n$ is projective, choose a $G$-equivariant embedding $F \hookrightarrow \mathbf{P}(\mathbb{V})$, where $\mathbb{V}$ is some linear representation of $G$. The variety $Z_n:=\mathbf{P}(\mathbb{V})\times_G P_n$ is projective, since it is the projectivization of the vector bundle $\mathbb{V}\times_G P_n$ over $X_n$ and the projectivization of any vector bundle over $X_n$ is projective (over $X_n$ and hence over $\mathbb{C}$) \cite[\href{https://stacks.math.columbia.edu/tag/01W7}{Tag 01W7}]{stacks-project}. The variety $Y_n$ is a closed subvariety of $Z_n$ and is thus also projective. Hence, this provides the example required by the statement of Theorem \ref{thm61}.

\begin{rem} \label{rem:rennemo} We sketch an alternative construction of a fibration $P \to B$ satisfying the hypotheses of Theorem \ref{thm61}. The base $B$ will be again be one of the smooth Fano varieties $X_n$ from \cite{Ottem_Rennemo_2024}. There is a non-trivial $\mathbb{P}^1$-bundle over $X_n$ whose Brauer class represents the non-trivial element $b \in H^3(X_n,\mathbb{Z})$ (see \cite[A.8]{Ottem_Rennemo_2024}). Let $h \in H^2(\mathbb{P}^1,\mathbb{Z})$ be the hyperplane class. It follows from the general theory of Brauer classes that on the $E^3$ page of the Leray spectral sequence, we have $$d^3(h)=b.$$ In particular, the spectral sequence fails to degenerate. This construction applies more generally to smooth rationally connected projective varieties with torsion in $H^3(X_n,\mathbb{Z})$.  \end{rem}

%\begin{rem} In \cite[Proposition 2.1]{Ekedahl_2009}, Ekedahl produces similar examples of $G/G_{\operatorname{Borel}}$ fibrations that are not cohomologically split for all non-special linear algebraic groups. We do not know if (by possibly modifying the construction) the bases for these examples can be arranged to be Fano (or at least rationally connected).  \end{rem}

\appendix

\section{Pseudocycles in characteristic $p$}\label{app:A}

For semi-positive symplectic manifolds, Gromov--Witten invariants are defined via intersections of pseudocycles (see \cite{Ruan_Tian}\cite{McDuff_Salamon_2004}). One can show that pseudocycles up to cobordism are in one-to-one correspondence with integral homology classes (see \cite{Zinger_2008}). Moreover, as intersection numbers are cobordism invariant, the Gromov--Witten invariants are functions of integral homology classes (modulo torsion).

We would like to consider the analogue of Zinger's theorem in finite characteristic. The results are well-known to experts (cf. \cite{Wilkins_survey}), and we include the discussion for completeness.

%\gx{We do not claim any originality in this appendix.}

\begin{defn}
Let $p$ be a prime number. An oriented $k$-dimensional $p$-pseudocycle in a smooth manifold $X$ is a smooth map $f: W \to X$ from a $k$-dimensional oriented manifold with boundary $W$ to $X$ satisfying the following property:
\begin{enumerate}

\item $f(W)$ is precompact in $X$.

\item There is a $p$-fold oriented covering $\partial W \to V$ and a smooth map $g: V \to X$ such that $f|_{\partial W}$ is the pullback of $g$. 

\item The frontier is small. More precisely, the $\Omega$-set of $f$ is 
\beqn
\Omega_f:= \bigcap_{K \subset W\ {\rm compact}} \ov{ f( W \setminus K)}.
\eeqn
We require that it has dimension at most $k-2$.
\end{enumerate}

\end{defn}

In particular, a pseudocycle is a $p$-pseudocycle with $\partial W = \emptyset$.

Let $\tilde {\mc H}_k^{(p)} (X)$ be the set of all oriented $k$-dimensional $p$-pseudocycles. It has the structure of an abelian group where the sum of two $p$-pseudocycles is their disjoint union and the inverse of a $p$-pseudocycle is the same map with domain orientation reversed.

\begin{defn}
Two oriented $k$-dimensional $p$-pseudocycles $f_0: W_0 \to X$, $f_1: W_1 \to X$ are said to be $p$-cobordant if there exists another smooth map $g: V \to X$ from an oriented $k$-dimensional manifold $V$ without boundary, a smooth map $\tilde f: \tilde W \to X$ from an oriented $k+1$-dimensional manifold $\tilde W$ with boundary with 
\beqn
\tilde f(\tilde W)\ {\rm is\ precompact},\ {\rm dim} \Omega_{\tilde f} \leq k-1,
\eeqn
an oriented diffeomorphism
\beqn
\partial \tilde W  \cong - {\rm Int} W_0 \sqcup {\rm Int} W_1 \sqcup W'
\eeqn
where $W'$ is a $k$-dimensional manifold without boundary (having the induced orientation)
such that the restriction of $\tilde f$ to ${\rm Int} W_0$ resp. ${\rm Int} W_1$ coincides with $f_0$ resp. $f_1$, and an oriented $p$-fold covering $W' \to V$ such that $\tilde f|_{W'}$ is the pullback of $g$. 
\end{defn}

One needs to do some simple modifications to make certain naive operations satisfy the above definition. For example, a $p$-pseudocycle $f: W \to X$ is $p$-cobordant to itself. However, the naive map 
\beqn
\tilde f: W \times [0, 1] \to X,\ \tilde f(x, t) = f(x)
\eeqn
is a $p$-cobordism only after removing the corner $\partial W \times \{0, 1\}$.

It is clear that $p$-cobordant is an equivalence relation and respect the additive structure. Let ${\mc H}_k^{(p)} (X)$ be the set of $p$-cobordant classes of $p$-pseudocycles, which is an abelian group. As $p$ times of any $p$-pseudocycle is $p$-cobordant to the empty set via the empty $p$-cobordism, ${\mc H}_k^{(p)}(X)$ is indeed an ${\mb F}_p$-vector space.

Before we discuss the relation between pseudocycles and homology classes, we define the intersection pairing between pseudocycles in finite characteristic. 

Let 
\beqn
f_1: W_1 \to X,\ f_2: W_2 \to X
\eeqn
be two oriented $p$-pseudocycles in $X$ of complimentary dimensions. When they intersect transversely, meaning that $\ov{f_1(W_1)} \cap \ov{f_2(W_2)} = f_1({\rm Int}W_1) \cap f_2({\rm Int} W_2)$ and the intersection is transverse, the signed count of intersection points, modulo $p$, is defined to be the intersection number. One can see easily that this intersection number is invariant under $p$-cobordism, provided that the $p$-cobordism is in general position. 

In addition, if $f_1: W_1 \to X$ is $p$-cobordant to the empty set via a $p$-cobordism $\tilde f_1: \tilde W_1 \to X$, and if $\tilde f_1$ and $f_2$ are transverse, then $\tilde f_1(\tilde W_1) \cap f_2(W_2)$ is a compact oriented 1-dimensional manifold with boundary being
\beqn
\Big( \tilde f_1( \partial \tilde W_1) \cap f_2({\rm Int} W_2)\Big) \sqcup \Big( \tilde f_1( {\rm Int} \tilde W_1) \cap f_2(\partial W_2) \Big).
\eeqn
Besides the intersection $f_1({\rm Int} W_1) \cap f_2({\rm Int} W_2)$, other boundary intersections contribute to a multiple of $p$. 

In general, if relevant intersections are not transverse, then one can perturb via ambient diffeomorphisms to achieve transversality. This allows us to define intersection numbers between any pair of $p$-pseudocycles of complementary dimensions and prove the independence of the choice of perturbations, as all nearby diffeomorphisms are homotopic. Therefore, we have defined a bilinear pairing 
\beqn
{\mc H}^{(p)}(X) \otimes {\mc H}^{(p)}(X) \to {\mb F}_p
\eeqn

Next we will define the map from homology to pseudocycles. Recall that one also has an abelian group ${\mc H}_k(X)$ of genuine $k$-dimensinoal pseudocycles up to genuine cobordism. There is an obvious group homomorphism
\beqn
{\mc H}_* (X) \to {\mc H}_* ^{(p)} (X).
\eeqn
Zinger \cite{Zinger_2008} constructed a natural isomorphism
\beqn
\Phi_*: H_* (X; {\mb Z}) \cong {\mc H}_* (X).
\eeqn

\begin{thm}\label{pseudocycle_theorem}
There is a natural map 
\beqn
\Phi^{(p)}_*: H_* (X; {\mb F}_p) \to {\mc H}_*^{(p)}(X)
\eeqn
satisfying the following conditions.
\begin{enumerate}
\item The following diagram commutes.
\beq\label{pseudocycle_commutative_diagram}
\vcenter{ \xymatrix{  H_* (X; {\mb Z}) \ar[r]^{\Phi_*} \ar[d] & {\mc H}_* (X) \ar[d] \\
            H_* (X; {\mb F}_p) \ar[r]_{\Phi_*^{(p)}} &    {\mc H}_*^{(p)}(X)  } }
\eeq

\item Suppose a homology class $a \in H_*(X; {\mb F}_p)$ is represented by a smooth cycle $f: W \to X$ where $W$ is a compact oriented manifold with boundary such that $f|_{\partial W}$ is a $p$-fold oriented covering of a map $g: V \to X$ from a compact oriented manifold $V$ without boundary; in particular, $f$ is a $p$-pseudocycle. Then $\Phi_*^{(p)}(a)$ is represented by $f$.

\item The map $\Phi_*^{(p)}$ intertwines the Poincar\'e pairing on $H_* (X; {\mb F}_p)$ with the intersection pairing on ${\mc H}_*^{(p)}(X)$.
\end{enumerate}
\end{thm}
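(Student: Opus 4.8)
The plan is to construct $\Phi_*^{(p)}$ by factoring through simplicial chains, mimicking Zinger's construction of $\Phi_*$ but allowing the boundary-degeneration by a $p$-fold cover. First I would recall from \cite{Zinger_2008} that a smooth singular cycle with $\mathbb{Z}$-coefficients can be ``smoothed'' to a pseudocycle: after subdividing and perturbing, a $\mathbb{Z}$-cycle $\sum n_i \sigma_i$ (with $\sigma_i \colon \Delta^k \to X$) is replaced by a smooth map from a compact oriented manifold-with-boundary whose boundary maps into lower skeleta and, because $\partial(\sum n_i\sigma_i)=0$, the boundary contributions cancel in pairs and can be capped off, producing a genuine pseudocycle. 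For $\mathbb{F}_p$-coefficients the same procedure applies to a smooth $\mathbb{F}_p$-cycle $c=\sum n_i\sigma_i$ with $\partial c \in pC_{k-1}(X;\mathbb{Z})$: the boundary now need not cancel, but it equals $p$ times a $\mathbb{Z}$-chain, which after smoothing becomes the image of a $p$-fold oriented cover of a closed manifold $V$; this is exactly the data in the definition of a $p$-pseudocycle. Well-definedness on homology classes follows because two $\mathbb{F}_p$-cycles differing by $\partial(\text{smooth }(k+1)\text{-chain}) + p\cdot(\text{smooth }k\text{-chain})$ give $p$-cobordant $p$-pseudocycles, by smoothing the $(k+1)$-chain to a $p$-cobordism in the sense of the definition above. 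Naturality of smoothing (all choices of subdivision/perturbation are connected through the same kind of construction) gives independence of choices.

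For part (1), the commutativity of \eqref{pseudocycle_commutative_diagram} is immediate from the construction: an integral class is represented by a $\mathbb{Z}$-cycle, whose image in $H_*(X;\mathbb{F}_p)$ is represented by the same smooth chain now viewed mod $p$; applying the two smoothing procedures to the same chain yields the same underlying map, and the reduction map ${\mc H}_*(X)\to{\mc H}_*^{(p)}(X)$ is literally ``view the same pseudocycle as a $p$-pseudocycle with empty cover.'' For part (2), if $a$ is \emph{already} represented by a $p$-pseudocycle $f\colon W\to X$ in the given form, then $f$ itself, triangulated, is a smooth $\mathbb{F}_p$-cycle representing $a$, and the smoothing construction applied to it returns (up to $p$-cobordism) $f$ again — here one uses that the smoothing of a chain that is already the triangulation of a manifold-with-nice-boundary is $p$-cobordant to that manifold, which is the relative analogue of the corresponding fact in \cite{Zinger_2008}.

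For part (3), the intersection-pairing compatibility, I would argue as follows. Both pairings are bilinear, so it suffices to check on classes $a\in H_k(X;\mathbb{F}_p)$, $b\in H_{n-k}(X;\mathbb{F}_p)$ represented by transverse $p$-pseudocycles of the special form in part (2) — such representatives can always be found after generic perturbation and because any $\mathbb{F}_p$-class has a smooth representative of that form. For such representatives the Poincaré pairing $\langle \mathrm{PD}(a),\mathrm{PD}(b)\rangle$ is computed, mod $p$, by the signed count of intersection points of the interiors (the boundary covers contribute multiples of $p$ and the $\Omega$-sets have too small dimension to meet generically), which is precisely the intersection number defined on ${\mc H}_*^{(p)}(X)$. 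Invariance under the choice of transverse representative on both sides — already established for the pseudocycle intersection number via generic ambient-diffeomorphism perturbations, and standard for the Poincaré pairing — then finishes the identification.

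The main obstacle I expect is part (2) together with the well-definedness of $\Phi_*^{(p)}$ on homology: one must be careful that the smoothing of the ``$p$-divisible'' part of the boundary really can be arranged to be an honest $p$-fold \emph{oriented} covering of a \emph{closed} manifold (not just a degree-$p$ map), and that a $p$-cobordism of chains smooths to a $p$-cobordism of $p$-pseudocycles with the corner strata $\partial W'\times\{0,1\}$ removed as in the remark following the definition. This is where the bookkeeping of orientations and of the dimensions of $\Omega$-sets is delicate; everything else is a routine transcription of \cite{Zinger_2008} with $\mathbb{Z}$ replaced by $\mathbb{F}_p$ and ``closed manifold'' replaced by ``manifold with boundary a $p$-fold cover.''
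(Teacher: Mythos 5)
Your construction of $\Phi_*^{(p)}$ and your treatment of parts (1) and (2) follow essentially the same route as the paper: smooth a singular $\mathbb{F}_p$-cycle \`a la Zinger by gluing cancelling pairs of codimension-one faces and discarding the codimension-two skeleton, observe that the unmatched faces occur in multiples of $p$ and hence assemble into the $p$-fold covering data of a $p$-pseudocycle, and smooth a bounding chain to a $p$-cobordism to get well-definedness; (1) and (2) are then read off from the construction exactly as in the paper. (One small point: you ask the base $V$ of the boundary covering to be a \emph{closed} manifold, but the paper's definition of a $p$-pseudocycle does not require this, and the representatives produced by the gluing construction have $V$ a union of open simplices; insisting on closedness is neither needed nor achievable here. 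The paper also quietly restricts the construction to $p>2$, invoking Thom's theorem for $p=2$.)

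The genuine gap is in your argument for part (3). You reduce to classes $a,b$ that admit representatives of the special form in part (2), asserting that ``any $\mathbb{F}_p$-class has a smooth representative of that form.'' That assertion is the mod-$p$ Steenrod representability problem (representability by $\mathbb{Z}/p$-manifold-type cycles, i.e.\ surjectivity of oriented bordism with $\mathbb{Z}/p$-coefficients onto $H_*(X;\mathbb{F}_p)$), and it fails in general for odd $p$ in sufficiently high dimensions, where Atiyah--Hirzebruch differentials obstruct the edge homomorphism; at best it would require a substantial bordism-theoretic argument that you do not supply. Moreover, to verify the intertwining property you must compare the Poincar\'e pairing of $a,b$ with the intersection number of the actual classes $\Phi_*^{(p)}(a),\Phi_*^{(p)}(b)$, so even granting nice representatives you need part (2) to identify them with the images of $\Phi_*^{(p)}$ --- the reduction buys you nothing that the constructed representatives do not already give. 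The fix, which is what the paper does (tersely), is to argue directly on the glued simplicial representatives: the Poincar\'e pairing of two $\mathbb{F}_p$-classes can be computed by counting transverse intersections of (perturbed) smooth cycle representatives, and for the representatives produced by the construction this count is, modulo $p$, literally the intersection number of the corresponding $p$-pseudocycles, since boundary contributions come in multiples of $p$ and the discarded codimension-two strata meet nothing generically. With part (3) rerouted in this way, the rest of your plan matches the paper's proof.
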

\begin{proof}
For the purpose of this paper, we do not need to show that $\Phi_*^{(p)}$ is invertible. We just need to consider the case that $p>2$ as it is known that for $p=2$ any homology class can be represented by a closed submanifold (see, e.g., \cite[Theorem B]{thom-elementary})

We follow the same approach as Zinger \cite{Zinger_2008}. Consider the complex of singular chains with ${\mb F}_p$-coefficients. A homology class in ${\mb F}_p$ coefficients is represented by a singular cycle
\beqn
C = \sum_{i=1}^N a_i h_i
\eeqn
where $a_i \in {\mb F}_p$ and $h_i: \Delta_k \to X$ is a continuous map from the $k$-simplex $\Delta_k$. We can always choose the representative such that each $h_i$ is smooth. Let $\delta^j(h_i)$ be the $j$-th face of $h_i$, which is a $k-1$-simplex. Moreover, let $\tilde a_i \in \{1, \ldots, p-1\}$ be a lift of $a_i$ in ${\mb Z}$. For any smooth map $g: \Delta_{k-1} \to X$ which may appear as a face of $h_i$, consider 
\beqn
\Delta_{i,g}^\pm(C) = \Big\{ j\ |\delta^j(h_i) = \pm g \Big\}
\eeqn
and 
\beqn
\Delta_g^\pm (C) = \bigcup_{i=1}^N \left( \underbrace{\Delta_{i, g}^\pm(C) \sqcup \cdots \sqcup \Delta_{i, g}^\pm (C) }_{\tilde a_i} \right)
\eeqn
Then since $C$ is a cycle, one has 
\beqn
\# \Delta_g^+(C) - \# \Delta_g^-(C)  \in p{\mb Z}.
\eeqn
Without loss of generality, assume $\# \Delta_g^+(C) \geq \# \Delta_g^-(C)$. Then choose an injection $\Delta_g^-(C) \hookrightarrow \Delta_g^+(C)$ and glue the corresponding (interior of) faces, and removing all codimension two or higher facets from the simplexes, one obtains a topological $k$-manifold with boundary $N(C)$ together with a map $f: N(C) \to X$. Notice that the boundary of $N(C)$ can be identified as $p$ copies of a manifold. If we fix a certain standard way of gluing standard $k$-simplexes along a face, $N(C)$ is then equipped with a canonical smooth structure. One can perturb $f: N(C) \to X$ to a smooth map, and hence a smooth $p$-pseudocycle. The cobordism class of the pseudocycle $f$ is independent of the choice of the perturbation.

On the other hand, by using the same method as Zinger \cite[Lemma 3.3]{Zinger_2008}, one can show that two homologous cycles induce cobordant $p$-pseudocycles. The details are left to the reader. Hence we have constructed the map 
\beqn
\Phi_*^{(p)}: H_*(X; {\mb F}_p) \to {\mc H}_*^{(p)} (X).
\eeqn
It is easy to see that this is an ${\mb F}_p$-linear map making the diagram \eqref{pseudocycle_commutative_diagram} commutative and satisfying (2) of Theorem \ref{pseudocycle_theorem}. Moreover, by comparing the definition of the Poincar\'e pairing (which is essentially counting transverse intersections of cycles) and the intersection pairing, (3) of Theorem \ref{pseudocycle_theorem} is also true. 
\end{proof}

\bibliography{reference}

\bibliographystyle{amsalpha}

\end{document}